\documentclass[12pt]{amsart}

\usepackage{amssymb}

\usepackage{cleveref}

\newtheorem{thm}{Theorem}

\newtheorem{lem}[thm]{Lemma}
\newtheorem{cor}[thm]{Corollary}
\newtheorem{prop}[thm]{Proposition}

\theoremstyle{definition}
\newtheorem{defn}[thm]{Definition}
\newtheorem{notn}[thm]{Notation}

 \newcounter{case}

 \renewcommand{\thecase}{\arabic{case}}

\newcounter{subcase}

 \renewcommand{\thesubcase}{\alph{subcase}}

\numberwithin{thm}{section}

\usepackage{fullpage}

\def\Aut{{\rm Aut}}
\def\Z{{\mathbb Z}}
\def\Cay{{\rm Cay}}
\def\Sym{{\rm Sym}}
\def\lcm{{\rm lcm}}

\title{Dihedral groups of order $2pq$ or $2pqr$ are DCI}

\author{Joy Morris}

\thanks{Supported by the Natural Science and Engineering Research Council of Canada (grant RGPIN-2017-04905).}

\address{Department of Mathematics and Computer Science\\
	University of Lethbridge\\
	Lethbridge, AB T1K 3M4\\
	Canada}

\email{joy.morris@uleth.ca}

\keywords{Cayley isomorphism problem, CI-group, CI-graph, dihedral groups, Cayley graphs}

\subjclass{05C25}

\begin{document}



\maketitle

\begin{abstract}
A group has the (D)CI ((Directed) Cayley Isomorphism) property, or more commonly is a (D)CI group, if any two Cayley (di)graphs on the group are isomorphic via a group automorphism. That is, $G$ is a (D)CI group if whenever $\Cay(G,S)\cong \Cay(G,T)$, there is some $\delta \in \Aut(G)$ such that $S^\delta=T$. (For the CI property, we only require this to be true if $S$ and $T$ are closed under inversion.)

Suppose $p,q,r$ are distinct odd primes. We show that $D_{2pqr}$ is a DCI group. 
We present this result in the more general context of dihedral groups of squarefree order; some of our results apply to any such group, and may be useful in future toward showing that all dihedral groups of squarefree order are DCI groups.
\end{abstract}

\section{Introduction}

The Cayley Isomorphism (CI) and Directed Cayley Isomorphism problems for groups and graphs are long-standing problems of interest to algebraic graph theorists. The standard formulation for these problems and the basic tools used in proving them date back to~\cite{Babai}. Special cases of the problem (particularly for cyclic groups) had been studied prior to Babai's paper, but he presented them in a uniform context with helpful terminology and provided tools that have been essential to much of the work that has followed.

Let $G$ be a group and $S \subseteq G$. We define the Cayley (colour) (di)graph $\Cay(G,S)$ to be the (colour) (di)graph whose vertices are the elements of $G$, with an arc from the vertex $g$ to the vertex $sg$ if and only if $s \in S$. For colour (di)graphs, each element of $S$ has an associated colour, and the arcs that arise using that element of $s$ are given that colour. Note that graph automorphisms coming from elements of $G$ will be acting by multiplication on the right. We will use exponents to denote the actions of group automorphisms and action by conjugation, and write other permutation group actions on sets on the right but without an exponent, as the details of our proofs would get very difficult to read in the exponents.

The Cayley (di)graph $\Cay(G,S)$ has the (D)CI ((Directed) Cayley Isomorphism) property, or more commonly is a (D)CI graph, if whenever $\Cay(G,S)\cong \Cay(G,T)$, there is some $\delta \in \Aut(G)$ such that $S^\delta=T$. For a Cayley colour (di)graph, both the isomorphism and the group automorphism must preserve the colours that have been assigned to the elements of $S$ and $T$. A group has the CI property if every Cayley graph on the group has the CI property. It has the DCI property if every Cayley digraph on the group has the DCI property. It has the CI$^{(2)}$ property if every Cayley colour digraph on the group has the DCI property (this notation comes from the 2-closure of a group, which will arise later in this paper). If a group has the DCI property, then since every Cayley graph is also a digraph (each edge is equivalent to a digon of arcs), it also has the CI property. Likewise, if it has the CI$^{(2)}$ property then it has the DCI property. Although our results in this paper and many of the results we discuss in fact prove that groups are CI$^{(2)}$ groups, Cayley colour digraphs are not much studied and this terminology is less common, so we will refer to the DCI property and DCI groups throughout the remainder of this paper, except in stating Babai's criterion. Since we prove that Babai's criterion holds, our result does in fact show that these dihedral groups are CI$^{(2)}$ groups.

Much work by many authors has gone into the study of the (D)CI properties, and the groups that can be CI groups are quite limited. In particular, if a group is (D)CI then so is every subgroup (and every quotient). Given that whenever $p$ is an odd prime, the cyclic group $\mathbb Z_{p^2}$ is not DCI, and elementary abelian $p$-groups of rank at least $2p+3$ are not DCI groups, groups of squarefree order are a significant aspect of this problem. For cyclic groups, the DCI problem was completely solved by Muzychuk~\cite{Ma,Muz2}. 

\begin{thm}[Muzychuk~\cite{Ma,Muz2}]\label{Muz-cyclic}
A cyclic group is a DCI group if and only if its order is either squarefree, or twice a squarefree number.
\end{thm}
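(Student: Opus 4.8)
The plan is to prove the two implications separately, handling the elementary necessity direction first and then the substantial sufficiency direction through the theory of Schur rings over cyclic groups. Throughout I would use the equivalent reformulation of the hypothesis: the order $n$ is squarefree or twice a squarefree number precisely when $v_2(n) \le 2$ and $v_p(n) \le 1$ for every odd prime $p$, where $v_p$ denotes the $p$-adic valuation.

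For necessity I would argue by locating obstructions and invoking inheritance under subgroups. Since a subgroup of a DCI group is again DCI, it suffices to find a small non-DCI cyclic subgroup inside any $\Z_n$ whose order is neither squarefree nor twice squarefree. Such an $n$ satisfies $p^2 \mid n$ for some odd prime $p$ or else $8 \mid n$, and correspondingly $\Z_{p^2}$ (odd $p$) or $\Z_8$ embeds as a subgroup. I would then exhibit for each of these the classical pairs of Cayley (di)graphs that are abstractly isomorphic but not isomorphic via any group automorphism, computing $\Aut(\Z_{p^2}) \cong \Z_{p^2}^\times$ and checking directly that no multiplier carries one connection set to the other while an ad hoc permutation does. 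This settles the ``only if'' direction.

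For sufficiency I would use Babai's criterion: $\Cay(\Z_n,S)$ is a (D)CI graph precisely when any two regular subgroups of $\Aut(\Cay(\Z_n,S))$ isomorphic to $\Z_n$ are conjugate in $\Aut(\Cay(\Z_n,S))$. To make $\Aut(\Cay(\Z_n,S))$ tractable I would pass to the Schur ring (S-ring) $\mathcal A$ over $\Z_n$ determined by $S$, i.e.\ the transitivity module of the $2$-closure, translating the required conjugacy of regular cyclic subgroups into a property of $\mathcal A$ together with the action of $\Aut(\Z_n)$ on it as a group of multipliers. The engine is then the structure theory of S-rings over cyclic groups (Klin--Pöschel, Leung--Man): every such S-ring is assembled from cyclotomic S-rings over prime-power sections by generalized wreath products and tensor/dot products. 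I would run an induction on $n$ showing that, for each way $\mathcal A$ can decompose, the conjugacy property passes from the factors to the whole, with base cases the S-rings over $\Z_p$, $\Z_2$, and $\Z_4$.

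The crux, and the main obstacle, is the generalized wreath decomposition, because this is exactly the mechanism that destroys CI for $\Z_{p^2}$ and $\Z_8$: a wreath structure over a $\Z_p \wr \Z_p$ section (or its analogue at the prime $2$ inside $\Z_8$) admits several non-conjugate regular cyclic subgroups. The heart of the argument is therefore to show that under $v_2(n)\le 2$ and $v_p(n)\le 1$ for odd $p$ no obstructing wreath decomposition can arise: whenever an S-ring over $\Z_n$ is wreath-indecomposable I would force it to be rational/cyclotomic, so that a Burnside--Schur type argument on its automorphism group pins the regular subgroups into a single conjugacy class, and whenever it does split as a generalized wreath product the squarefree/twice-squarefree constraint guarantees that the top and bottom sections again satisfy the hypothesis, letting the inductive conjugacy statements for the factors be glued via the multiplier structure. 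Verifying that this gluing is compatible across every admissible decomposition, and that the number-theoretic condition genuinely excludes every bad case, is where essentially all of the difficulty and bookkeeping resides.
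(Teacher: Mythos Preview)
The paper does not prove this theorem at all: it is stated as a citation of Muzychuk's results \cite{Ma,Muz2} and is used later only as a black box (in \Cref{use-cyclic}). So there is no ``paper's own proof'' to compare against; your proposal is not being measured against anything in this manuscript.

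That said, your outline is broadly faithful to how Muzychuk's original argument actually proceeds. The reformulation $v_2(n)\le 2$ and $v_p(n)\le 1$ for odd $p$ is correct (note that ``twice a squarefree number'' allows the squarefree number itself to be even, which is why $v_2(n)=2$ is permitted and $\Z_4$ is indeed DCI). The necessity direction via non-DCI witnesses in $\Z_8$ and $\Z_{p^2}$ is the standard one. For sufficiency, the Schur-ring route through the Leung--Man classification and an induction along wreath/tensor decompositions is exactly Muzychuk's strategy; your identification of the generalized wreath step as the crux is accurate. What your sketch understates is the amount of genuine work hidden in the phrase ``letting the inductive conjugacy statements for the factors be glued via the multiplier structure'': this gluing is not formal and requires a careful analysis of how regular cyclic subgroups sitting over a wreath decomposition interact with the multiplier groups of the sections, which in Muzychuk's papers occupies most of the technical effort. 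As a plan it is sound, but the final paragraph would expand into many pages before it became a proof.
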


Our main result is the following.

\begin{thm}\label{main}
Suppose $p,q,r$ are distinct odd primes. Then $D_{2pq}$ and $D_{2pqr}$ are DCI groups.
\end{thm}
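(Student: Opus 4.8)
The plan is to verify Babai's criterion. Write $G$ for either $D_{2pq}$ or $D_{2pqr}$ and $n$ for $pq$ or $pqr$. It then suffices to show that whenever $A$ is the automorphism group of a Cayley colour digraph on $G$ (equivalently, a $2$-closed group containing the right-regular representation $G_R$ as a regular subgroup), every regular subgroup $H\le A$ with $H\cong G$ is conjugate to $G_R$ in $A$. The first structural observation I would exploit is that the degree $2n$ is itself squarefree, since $2,p,q,r$ are distinct primes; thus $A$ is a transitive group of squarefree degree, and such groups are severely constrained. In each of $G_R$ and $H$ the cyclic rotation subgroup (call them $C$ and $C'$) is the unique subgroup of index $2$ and hence characteristic; because $n$ is odd, $C$ and $C'$ are semiregular, each with exactly two orbits of size $n$.

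Next I would build the conjugating element from the prime structure of $n$ upward, using the block systems coming from subgroups of $C$. For each divisor $d\mid n$ the subgroup of $C$ of order $d$ is semiregular, and its orbits are a candidate block system; the squarefree hypothesis arranges these into a clean lattice indexed by the divisors of $n$. The aim at this stage is to show that $A$ respects a block system refining the rotation orbits, so that the analysis can be pushed to quotients and lifted from subgroups. On the rotation subgroups themselves I would invoke \Cref{Muz-cyclic}: since $\Z_n$ is DCI for squarefree $n$, the cyclic parts $C$ and $C'$ can be matched, and the problem reduces to controlling how the inverting involutions (the reflections) sit over a fixed identification of the cyclic skeletons.

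The lifting from this cyclic skeleton to the full dihedral group is where the dihedral-specific work lies: one must produce a single element of $A$ that simultaneously conjugates $C'$ to $C$ and carries the inverting involutions of $H$ onto those of $G_R$, respecting every colour. I would organize this by the interaction of the primes, quotienting by the orbits of the order-$(n/\ell)$ subgroup for each prime $\ell\mid n$ to land in a dihedral (or cyclic) group with strictly fewer prime factors, using the known result for $D_{2p}$ (respectively $D_{2pq}$) as an inductive base, and then reassembling. The odd, distinct-prime, squarefree hypotheses are precisely what keep these quotients dihedral of squarefree order and the relevant Sylow structure rigid.

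I expect the main obstacle to be the case analysis forced by the number-theoretic relations among $p,q$ (and $r$): when one prime divides $\ell-1$ for another prime $\ell\mid n$, the admissible $2$-closed overgroups $A$ acquire extra automorphisms and possibly primitive or wreath-type components at that level, and each such configuration must be shown either not to arise from a Cayley colour digraph or still to force conjugacy of the two regular subgroups. Passing from $D_{2pq}$ to $D_{2pqr}$ multiplies the number of divisor-indexed block systems, and hence the combinations of local behaviours that must be ruled out or resolved; handling these simultaneously, rather than one prime at a time, is the crux, and is presumably why the general squarefree case is left open.
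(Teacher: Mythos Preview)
Your plan starts correctly with Babai's criterion and the idea of exploiting block systems, but it has two substantive gaps and one misdiagnosis that would prevent it from going through as written.

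First, the block systems you propose (orbits of the order-$d$ subgroups of $C$) are automatically $R_r$-invariant, but there is no reason a priori for them to be invariant under $G=\langle R_r,R_r^\pi\rangle$. Producing a nested chain of \emph{$G$-invariant} partitions (after a preliminary conjugation) is a nontrivial input: in the paper it is supplied by the Dobson--Muzychuk--Spiga result (\Cref{blocks}) and its refinement \Cref{better-blocks}. Your sketch assumes this lattice for free.

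Second, your strategy of ``match the cyclic parts via \Cref{Muz-cyclic}, then lift to the reflections'' does not work in general. Muzychuk's theorem conjugates $C_r^\pi$ to $C_r$ inside the $2$-closure of their action on each orbit $F_1,F_2$ separately; there is no mechanism to glue those two conjugating elements into a single element of $G^{(2)}$ unless the two orbits are already linked in a very specific way. The paper invokes this idea only in the special situation of \Cref{use-cyclic}, where $F_2$ is a single $G_x$-orbit, so that \Cref{in-2-closure} supplies the glue. Outside that case the cyclic result is not the engine; instead one builds the conjugating element generator by generator, via explicitly defined permutations whose membership in $G^{(2)}$ is checked using \Cref{in-2-closure}.

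Third, you misidentify the obstacle. The divisibility relations among $p,q,r$ (whether some prime divides $\ell-1$) play no role here; the paper explicitly dispenses with Dobson's hypothesis $\gcd(n,\varphi(n))=1$. The real case split is governed by \emph{block-regularity}: one takes the smallest $i$ for which $G$ is block-regular on $\mathcal B_i$ and treats $i=1,2,3$ separately (\Cref{cor-reg-B1}, \Cref{reg-on-B2-done}, \Cref{3-reg-done}). Within these cases the controlling invariants are the partitions $\mathcal X,\mathcal Y$ coming from the equivalence relation $\equiv_{\mathcal B}$ of \Cref{def-equivB} and \Cref{X-blocks}, together with the stabiliser-equality partitions $\mathcal K,\mathcal L$ of \Cref{Gx-blocks}. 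The proliferation of subcases when passing from two primes to three is driven by the possible sizes and coincidences of $\mathcal X$ and $\mathcal Y$, not by arithmetic among the primes. Your inductive/quotient scheme does not produce these objects, and without them there is no handle on when a locally defined $\beta$ actually lies in $G^{(2)}$.
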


Although we are only able to complete the proof for $3$ odd primes, we will set up our notation and prove some of our results in the more general context in which the dihedral group has order divisible by an arbitrary number of odd primes, in hopes that these results may be useful in future to prove that dihedral groups with more prime factors also have the DCI property.

It was shown in \cite{DMS} that $D_{6p}$ is a DCI group. The $D_{2pq}$ part of our theorem is a generalisation of that work. 

In 2002, Dobson~\cite{Dobson2002} worked on the CI problem for dihedral groups, and was able to show that $D_{2n}$ is a DCI group under some fairly strong conditions (this result is somewhat obscured by technical definitions, but is Theorem 22). His result required that $n$ be odd and squarefree, and that $\gcd(n,\varphi(n))=1$. He also assumed that if $n=p_1\cdots p_s$ where $p_1<\ldots <p_s$ are distinct odd primes, then for each $2 \le i \le s$, $p_i > 2p_1\cdots p_{i-1}$. However, he used this final hypothesis only to ensure the existence of many $G$-invariant partitions (this will be discussed further a bit later). With the new result~\Cref{blocks} found in \cite{DMS-CI3} to provide such $G$-invariant partitions, this hypothesis can be dispensed with. In addition to explicitly dispensing with the hypothesis that~\cite{DMS-CI3} shows to be unnecessary, our result dispenses with Dobson's hypothesis that $\gcd(n,\varphi(n))=1$.

The main tool Babai provided in \cite{Babai} is based on the automorphism group, and can be used to determine whether or not a graph is a (D)CI graph. In fact, it can be used to understand whether or not  every Cayley colour (di)graph has the (D)CI property.

\begin{lem}[Babai, \cite{Babai}]
Let $R$ be a finite group and let $S \subseteq R$. Then $\Cay(R,S)$ is a DCI graph if and only if for any $R' \le \Aut(\Cay(R,S))$ with $R' \cong R$, there is some $\delta \in \Aut(\Cay(R,S))$ such that $(R')^\delta=R$.
\end{lem}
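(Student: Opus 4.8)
The statement is Babai's criterion, which reformulates the DCI property as a conjugacy question inside $A := \Aut(\Cay(R,S))$. Throughout I would identify $R$ with its right regular representation in $\Sym(R)$; since every right translation $g \mapsto gh$ sends the arc $g \to sg$ to $gh \to sgh$, this copy of $R$ lies in $A$, and $R'$ should be understood to range over the \emph{regular} subgroups of $A$ isomorphic to $R$ (only a regular subgroup can be conjugate to the regular group $R$). The plan rests on three standard structural facts that I would record first: (i) a (di)graph on vertex set $R$ is a Cayley (di)graph of $R$ exactly when it is invariant under the right regular copy of $R$; (ii) any two regular subgroups of $\Sym(R)$ isomorphic to $R$ are conjugate in $\Sym(R)$; and (iii) the normalizer $N_{\Sym(R)}(R)$ is the holomorph $R \rtimes \Aut(R)$, so that a permutation normalizing $R$ factors as a right translation times a group automorphism, and $\Aut(R)$ is precisely the stabilizer of the identity in this normalizer. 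Fact (iii) is the engine of the whole argument: it is what converts a permutation conjugating one regular copy of $R$ to another into an honest element of $\Aut(R)$.

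For the ($\Leftarrow$) direction I would assume the conjugacy condition and suppose $\Cay(R,S)\cong\Cay(R,T)$ via a bijection $\phi$, viewed as an isomorphism $\Cay(R,T)\to\Cay(R,S)$. Then $\phi$ conjugates $\Aut(\Cay(R,T))$ onto $A$, so the image of the regular copy of $R$ under this conjugation is a regular subgroup $R^\phi \le A$ with $R^\phi \cong R$. By hypothesis there is $\delta\in A$ with $(R^\phi)^\delta = R$, so $\psi:=\phi\delta$ (again an isomorphism $\Cay(R,T)\to\Cay(R,S)$, being $\phi$ followed by the automorphism $\delta$) normalizes $R$ in $\Sym(R)$. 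By (iii), $\psi=\rho\,\sigma$ with $\rho$ a right translation and $\sigma\in\Aut(R)$; since right translations are automorphisms of \emph{every} Cayley digraph on $R$, the map $\sigma=\rho^{-1}\psi$ is still an isomorphism $\Cay(R,T)\to\Cay(R,S)$. A group automorphism is such an isomorphism precisely when it carries $T$ to $S$, so $T^\sigma=S$, whence $\sigma^{-1}\in\Aut(R)$ satisfies $S^{\sigma^{-1}}=T$ and $\Cay(R,S)$ is DCI.

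For the ($\Rightarrow$) direction I would assume $\Cay(R,S)$ is DCI and take a regular $R'\le A$ with $R'\cong R$. By (ii) choose $\phi\in\Sym(R)$ with $R^\phi=R'$, and let $\Gamma_0$ be the pullback of $\Gamma=\Cay(R,S)$ along $\phi$, so that $\phi:\Gamma_0\to\Gamma$ is an isomorphism and $R = (R')^{\phi^{-1}}\le \Aut(\Gamma_0)$. By (i), $\Gamma_0=\Cay(R,T)$ for some $T$, so $\phi:\Cay(R,T)\to\Cay(R,S)$ is an isomorphism; the DCI hypothesis then yields $\sigma\in\Aut(R)$ with $T^\sigma=S$, i.e.\ $\sigma$ is another isomorphism $\Cay(R,T)\to\Cay(R,S)$ which, by (iii), normalizes $R$. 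Setting $\beta:=\sigma^{-1}\phi\in A$ and using $R^\sigma=R$, a short computation gives $R^\beta=\phi^{-1}(R^{\sigma^{-1}})\phi=R^\phi=R'$, so $(R')^{\beta^{-1}}=R$ with $\delta:=\beta^{-1}\in A$, as required.

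The main obstacle is not any single hard step but fact (iii)—the identification of $N_{\Sym(R)}(R)$ with the holomorph and the resulting factorization $\psi=\rho\,\sigma$—which is exactly the bridge between the two languages involved: ``there is a group automorphism carrying $S$ to $T$'' on the one side, and ``the regular subgroups of $A$ are conjugate'' on the other. The remaining work is careful bookkeeping of the left/right multiplication and conjugation conventions so that the compositions land in the correct automorphism group and the direction of each isomorphism is tracked consistently.
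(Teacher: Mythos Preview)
The paper does not supply its own proof of this lemma: it is stated with attribution to Babai~\cite{Babai} and used as a black box, so there is no argument in the paper to compare against. Your proposal is the standard proof of Babai's criterion and is correct; in particular you rightly flag that $R'$ must be read as a \emph{regular} subgroup (since conjugation in $\Sym(R)$ preserves regularity, a non-regular $R'\cong R$ could never be conjugated onto $R$), and your identification of the normalizer $N_{\Sym(R)}(R)$ with the holomorph is exactly the hinge that turns conjugacy of regular copies into the existence of a group automorphism sending $S$ to $T$.
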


Note that $(R')^\delta=\delta^{-1}R'\delta$.

In order to use this concept most effectively to determine that a group has the CI property, we require the concept of the 2-closure of a permutation group. This concept was studied in some detail in the works of Wielandt~\cite{Wielandt}.

\begin{defn}
Let $G$ be a permutation group acting on a finite set $\Omega$. The \emph{2-closure} of $G$, denoted $G^{(2)}$, is the smallest permutation group containing $G$ that can be the automorphism group of a digraph. More precisely, 
$$G^{(2)}=\{\beta \in \Sym(\Omega) : \forall (x,y)\in \Omega^2, \exists g_{x,y} \in G \text{ with }(x,y)\beta=(x,y){g_{x,y}}\}.$$
\end{defn}

This leads us to the following standard characterisation of CI$^{(2)}$ groups based on Babai's result.

\begin{lem}[Standard, based on Babai]\label{Babai-conj}
Let $R$ be a finite group and let $R_r$ be the right-regular representation of $R$ in $\Sym(R)$. The groups $R_r$ and $R_r^\pi$ are conjugate in $\langle R_r,R_r^\pi\rangle^{(2)}$ for every $\pi \in \Sym(R)$ if and only if $R$ is a CI\/$^{(2)}$ group.
\end{lem}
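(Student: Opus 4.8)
The plan is to pass between the language of $2$-closures and the language of Cayley colour digraphs using two standard dictionaries. First, a colour digraph on vertex set $R$ is a Cayley colour digraph $\Cay(R,S)$ for some coloured connection set if and only if it is invariant under $R_r$, since right multiplication preserves exactly those arc-colourings determined by the ``difference'' $yx^{-1}$. Second, for any permutation group $G$ with $R_r\le G\le\Sym(R)$, the orbitals of $G$ on $R\times R$ colour a digraph $\Gamma_G$ whose automorphism group is precisely $G^{(2)}$; since $R_r\le G$, this $\Gamma_G$ is a Cayley colour digraph on $R$. I will also use two elementary facts: automorphism groups of (colour) digraphs are $2$-closed, and the normaliser of $R_r$ in $\Sym(R)$ is $R_r\rtimes\Aut(R)$, so a permutation that normalises $R_r$ and fixes the identity lies in $\Aut(R)$.

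For the forward direction, suppose $R$ is CI$^{(2)}$ and fix $\pi\in\Sym(R)$; set $G=\langle R_r,R_r^\pi\rangle$ and let $\Gamma=\Gamma_G=\Cay(R,S)$, so that $\Aut(\Gamma)=G^{(2)}$. The key observation is that the relabelled digraph $\Gamma^{\pi^{-1}}$ is again a Cayley colour digraph on $R$: its automorphism group is $\Aut(\Gamma)^{\pi^{-1}}=(G^{(2)})^{\pi^{-1}}$, which contains $R_r$ exactly because $R_r^\pi\le G\le G^{(2)}$. Writing $\Gamma^{\pi^{-1}}=\Cay(R,T)$, the permutation $\pi^{-1}$ witnesses $\Cay(R,S)\cong\Cay(R,T)$, so the CI$^{(2)}$ hypothesis yields $\sigma\in\Aut(R)$ that is also a colour isomorphism $\Gamma\to\Gamma^{\pi^{-1}}$. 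Now $\pi^{-1}$ and $\sigma$ are two isomorphisms between the same pair of digraphs, so they differ by an automorphism; concretely $\delta:=\sigma\pi$ has $\delta^{-1}=\pi^{-1}\sigma^{-1}\in\Aut(\Gamma)=G^{(2)}$, so $\delta\in G^{(2)}$ as well, and since $\sigma$ normalises $R_r$ one computes $R_r^\delta=\pi^{-1}R_r^\sigma\pi=R_r^\pi$. Thus $R_r$ and $R_r^\pi$ are conjugate in $G^{(2)}$.

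For the converse, suppose the conjugacy condition holds and suppose $\Cay(R,S)\cong\Cay(R,T)$ via some $\pi\in\Sym(R)$. Then $R_r\le\Aut(\Cay(R,T))$ because $\Cay(R,T)$ is Cayley on $R$, while $R_r^\pi\le\Aut(\Cay(R,T))$ because conjugating an automorphism group by an isomorphism produces the automorphism group of the image; hence $G:=\langle R_r,R_r^\pi\rangle\le\Aut(\Cay(R,T))$, and since the latter is $2$-closed we get $G^{(2)}\le\Aut(\Cay(R,T))$. The hypothesis supplies $\delta\in G^{(2)}$ with $R_r^\delta=R_r^\pi$, and then $\gamma:=\pi\delta^{-1}$ normalises $R_r$ and remains a colour isomorphism $\Cay(R,S)\to\Cay(R,T)$ (as $\delta^{-1}\in\Aut(\Cay(R,T))$). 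Multiplying $\gamma$ by the element of $R_r$ that sends its image of the identity back to the identity produces $\sigma\in\Aut(R)$ that is still such a colour isomorphism, whence $S^\sigma=T$ with colours preserved; this is exactly the DCI property for $\Cay(R,S)$, so $R$ is CI$^{(2)}$.

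The routine parts are the two direction arguments once the dictionaries are in place; the step I expect to require the most care is setting up those dictionaries precisely---in particular verifying that $G^{(2)}$ is realised as $\Aut(\Cay(R,S))$ for the orbital connection set and that every $R_r$-invariant colour digraph is Cayley on $R$---together with keeping the right-action conjugation conventions ($R_r^\pi=\pi^{-1}R_r\pi$, with isomorphisms composing on the right) consistent throughout, since a single transposed convention would invert the roles of $R_r$ and $R_r^\pi$.
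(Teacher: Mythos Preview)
The paper does not actually supply a proof of this lemma: it is stated with the attribution ``Standard, based on Babai'' and left unproved, as folklore. So there is nothing to compare against in the strict sense.

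Your argument is a correct and careful rendering of the standard proof. Both directions are handled properly: in the forward direction you realise $\langle R_r,R_r^\pi\rangle^{(2)}$ as the automorphism group of the orbital Cayley colour digraph and use the CI$^{(2)}$ hypothesis to produce a group automorphism, then compose to land the conjugating element in $G^{(2)}$; in the converse you embed $G^{(2)}$ in $\Aut(\Cay(R,T))$ via $2$-closedness, pull the conjugator back through the isomorphism, and normalise to obtain an element of $\Aut(R)$. The bookkeeping with the right-action convention $R_r^\pi=\pi^{-1}R_r\pi$ is consistent throughout, and your closing caveat correctly identifies the only places where one must be careful. One tiny notational slip: in the displayed computation $R_r^\delta=\pi^{-1}R_r^\sigma\pi$ you presumably mean $(R_r^\sigma)^\pi$, which equals $\delta^{-1}R_r\delta$ once you expand $\delta=\sigma\pi$; the conclusion is unaffected.
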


\section{Preliminaries: Notation and $G$-invariant partitions}

For the purposes of this paper, $R$ will be dihedral of squarefree order, say $2k$, where $k$ is odd and squarefree. Any dihedral groups that could potentially have the DCI property have this structure. Although the most natural generating set for $R$ has two elements (one of order $k$ and the other of order $2$), it will prove much easier to work with if we use one generator for each prime divisor. 

\begin{notn}\label{notn-1}
Henceforth in this paper, we use the following notation:
\begin{itemize}
\item $p_1, p_2, \ldots, p_s$ are distinct primes;
\item $R_r=\langle \rho_1,\ldots, \rho_s,\tau_1\rangle$, where $|\rho_i|=p_i$ for each $1 \le i \le s$, and $|\tau_1|=2$;
\item $R_r^\pi=\langle \sigma_1,\ldots,\sigma_s,\tau_2\rangle$ with $|\sigma_i|=p_i$ for each $1 \le i \le s$, and $|\tau_2|=2$; 
\item  both $R_r$ and $R_r^\pi$ are permutation groups acting regularly on the set $\Omega$ of cardinality $2p_1\cdots p_s$;
\item $G=\langle R_r,R_r^\pi \rangle$.
\end{itemize}
\end{notn}

Our goal will be to show that there is some $\beta \in G^{(2)}$ such that $R_r^{\pi\beta}=R_r$.

In this paper, we will sometimes simplify our notation with an abuse: suppose that we can find some $\beta_1 \in \langle R_r,R_r^\pi\rangle^{(2)}$ such that $R_r^{\pi\beta_1}=\beta_1^{-1}R_r^\pi\beta_1$ has some desirable properties and $\langle R_r,R_r^{\pi\beta_1}\rangle^{(2)}\le \langle R_r, R_r^\pi\rangle^{(2)}$. In this event, rather than writing $R_r^{\pi\beta_1}$ thenceforward, we ``replace" $R_r^\pi$ by this new group, and replace each generator in whatever standard generating set we are using for $R_r^\pi$ by the appropriate conjugate under $\beta_1$. In effect, from this point forward we behave as though $R_r^\pi$ had been this new conjugate all along, since we know we can reach this through conjugation in $\langle R_r,R_r^\pi\rangle^{(2)}$. We may do this repeatedly, with a $\beta_2$, etc. We will provide some additional justification that this abuse does not invalidate our proofs, at the end of this section.

For the rest of this section we focus on $G$-invariant partitions, and show that after conjugating $R_r^\pi$ by some element of $G^{(2)}$ if necessary, the resulting $G=\langle R_r, R_r^\pi\rangle$ admits a sequence of nested $G$-invariant partitions: one consisting of $2p_{i+1}\cdots p_s$ blocks of cardinality $p_1\cdots p_i$ for every $1 \le i \le s$. We also show some additional desirable properties that we may assume our partitions have, describe circumstances under which we can reorder our primes while maintaining all of our key hypotheses about partitions, and develop additional notation based on all of this information.

\begin{defn}
Given a transitive group $G$ acting on the set $\Omega$, a partition $\mathcal B$ of $\Omega$ is \emph{$G$-invariant} if for every $B \in \mathcal B$ and every $g \in G$, $Bg \in \mathcal B$. Equivalently, $Bg \cap B \neq \emptyset$ implies that $Bg=B$.

If $|\mathcal B|=a$ and $|B|=b$ for every $B \in \mathcal B$, we say that the partition $\mathcal B$ consists of $a$ blocks of cardinality $b$.

The $G$-invariant partition $\mathcal B$ is \emph{normal} if its blocks are the orbits of a normal subgroup of $G$.
\end{defn}

There are some useful ways to understand $G$-invariant partitions. The next lemma is well-known and easily follows from the definition of $G$-invariant partitions.

\begin{lem}\label{block-form-not-regular}
Suppose that $G$ is a transitive permutation group acting on the set $\Omega$. If $\mathcal B$ is a $G$-invariant partition then given any $y \in \Omega$, the blocks of $\mathcal B$ are the collection $\{y{H\gamma}: \gamma \in G\}$ for some $H \le G$.
\end{lem}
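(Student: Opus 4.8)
The plan is to identify the subgroup $H$ explicitly as the setwise stabiliser of the block containing the chosen point, and then verify that this choice reproduces the entire block system under the action of $G$. Concretely, I would fix $y \in \Omega$, let $B_0$ be the unique block of $\mathcal B$ containing $y$, and set $H = \{g \in G : B_0 g = B_0\}$. This is manifestly closed under products and inverses, so $H \le G$ requires no separate argument, and it gives a clean candidate for the subgroup named in the statement.

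The first substantive step is to show that $yH = B_0$. The inclusion $yH \subseteq B_0$ is immediate, since any $h \in H$ fixes $B_0$ setwise and $y \in B_0$, whence $yh \in B_0$. For the reverse inclusion, I would take an arbitrary $z \in B_0$ and use transitivity of $G$ on $\Omega$ to produce $g \in G$ with $yg = z$. Then $z \in B_0 g \cap B_0$, and since $B_0 g \in \mathcal B$ while distinct blocks of a partition are disjoint, the nonempty intersection forces $B_0 g = B_0$; hence $g \in H$ and $z = yg \in yH$. This yields $B_0 \subseteq yH$, and therefore $yH = B_0$.

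The second step is to show that $G$ permutes the blocks of $\mathcal B$ transitively, so that every block has the form $B_0 \gamma$ for some $\gamma \in G$. Given an arbitrary block $B \in \mathcal B$, choosing $b \in B$ and applying transitivity of $G$ on $\Omega$ gives $\gamma \in G$ with $y\gamma = b$; then $b \in B_0\gamma \cap B$, and the same disjointness argument as above gives $B_0\gamma = B$. Combining the two steps, each block $B$ equals $B_0\gamma = (yH)\gamma = yH\gamma$, and conversely each $yH\gamma = B_0\gamma$ is a block; thus $\mathcal B = \{yH\gamma : \gamma \in G\}$, as claimed.

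I do not anticipate a serious obstacle, as this is a standard structural fact about block systems. The only point requiring care is the reverse inclusion $B_0 \subseteq yH$, where one must invoke the defining property of a $G$-invariant partition—that an overlap $B_0 g \cap B_0 \neq \emptyset$ forces $B_0 g = B_0$—rather than merely the transitivity of $G$; the transitivity alone produces a group element moving $y$ onto any target point, and it is the block property that certifies this element lies in the stabiliser $H$.
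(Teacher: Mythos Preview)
Your proof is correct and is precisely the standard argument: take $H$ to be the setwise stabiliser of the block through $y$, verify $yH$ equals that block using transitivity together with the block property, and then use transitivity again to see that every block is a translate $B_0\gamma$. The paper itself does not supply a proof of this lemma at all---it is stated as well-known and said to follow easily from the definition of $G$-invariant partitions---so your write-up in fact fills in what the paper omits, and there is nothing to compare against.
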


In the situation of regular actions, we get much more information.

\begin{lem}\label{block-form}
Suppose that $G$ is a regular permutation group acting on the set $\Omega$. Then the converse of~\Cref{block-form-not-regular} holds; that is, given any $y \in \Omega$ and any $H \le G$, the collection $\{y{H\gamma}: \gamma \in G\}$ is a $G$-invariant partition.

Accordingly, for each $z \in \Omega$, the block of $\mathcal B$ that contains $z$ is $z{H}$ if and only if $z=y\gamma$ for some $\gamma \in G$ such that $H\gamma=\gamma H$. In particular, the blocks of $\mathcal B$ are the orbits of $H$ if and only if $H \trianglelefteq G$.
\end{lem}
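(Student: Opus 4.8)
The plan is to exploit regularity to translate everything into the language of cosets in $G$. Fix a base point $y\in\Omega$. Since $G$ is regular, the orbit map $\phi\colon G\to\Omega$, $\phi(g)=yg$, is a bijection; in particular $yg_1=yg_2$ forces $g_1=g_2$, and more generally $yA=yB$ if and only if $A=B$ for subsets $A,B\subseteq G$, where $yA:=\{ya:a\in A\}$. Under this identification the given action of $G$ on $\Omega$ becomes right multiplication of $G$ on itself, and the candidate block $yH\gamma=\{yh\gamma:h\in H\}$ is precisely $\phi(H\gamma)$, the image of the right coset $H\gamma$. Thus the collection $\{yH\gamma:\gamma\in G\}$ is exactly the image under $\phi$ of the family of right cosets of $H$.

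For the first assertion I would then argue as follows. The right cosets of $H$ partition $G$, so their images under the bijection $\phi$ partition $\Omega$; concretely, if $yH\gamma_1\cap yH\gamma_2\neq\emptyset$ then $yh_1\gamma_1=yh_2\gamma_2$ for some $h_1,h_2\in H$, whence $h_1\gamma_1=h_2\gamma_2$ by regularity, so $\gamma_1\in H\gamma_2$ and the two cosets, hence blocks, coincide. Coverage follows from transitivity, since each $z=y\gamma$ lies in $yH\gamma$ (take $h=1$). For $G$-invariance, note that $(yH\gamma)g=yH(\gamma g)$ is again a block, which is exactly the requirement in the definition of a $G$-invariant partition.

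For the ``accordingly'' clause, write $z=y\gamma$ with $\gamma$ unique by regularity. The block containing $z$ is $\phi(H\gamma)$, since $\gamma\in H\gamma$, while the set $zH$ equals $\phi(\gamma H)$, the image of the left coset. Pulling both back through the bijection $\phi$, they are equal exactly when $H\gamma=\gamma H$, which is the stated condition. The final sentence is then the special case where this holds for every $z$: the blocks coincide with the $H$-orbits precisely when $H\gamma=\gamma H$ for all $\gamma\in G$, i.e.\ when $H\trianglelefteq G$.

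There is no serious obstacle here; the lemma is routine once the regular action is identified with right multiplication on $G$. The one point needing care is the bookkeeping between the two sides of multiplication: the blocks are images of right cosets $H\gamma$, whereas the $H$-orbits $zH$ are images of left cosets $\gamma H$, and it is exactly the gap between these two that the normality condition closes. I would therefore be deliberate about applying the regularity cancellation $yA=yB\Rightarrow A=B$ in the correct direction at each step.
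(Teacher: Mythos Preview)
Your proof is correct and complete. The paper itself does not supply a proof of this lemma: it is stated as a standard fact (the preceding sentence calls the non-regular direction ``well-known and easily follows from the definition'', and this converse is treated in the same spirit), with no \texttt{proof} environment attached. Your argument via the bijection $\phi\colon G\to\Omega$ identifying the regular action with right multiplication, and then reading the blocks as right cosets and the $H$-orbits as left cosets, is exactly the standard justification one would give, and your care about the left/right distinction is precisely the point of the ``accordingly'' clause.
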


We identify some easy consequences of~\Cref{block-form} that will be useful in the context of this paper.

\begin{lem}\label{orbits-blocks}
Suppose that $H_1 \le R_r$ and $H_2 \le R_r^\pi$, using~\Cref{notn-1}. If for any fixed $x \in \Omega$ and for every $\alpha \in R_r$ there is some $\gamma \in R_r^\pi$ such that $x{H_1\alpha}=x{H_2\beta}$, then the collection $\{x{H_1\alpha}:\alpha \in R_r\}$ is a $G$-invariant partition.

In fact, if $H_1\le R_r$ is a cyclic subgroup of odd order in $R_r$, then $H_1$ has the same orbits on $\Omega$ as some $H_2 \le R_r^\pi$ if and only if the orbits of $H_1$ form a $G$-invariant partition.
\end{lem}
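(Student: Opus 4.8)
The unifying principle behind both statements is that, since $G=\langle R_r,R_r^\pi\rangle$, the set of permutations of $\Omega$ preserving a given partition is a subgroup of $\Sym(\Omega)$; hence a partition of $\Omega$ is $G$-invariant if and only if it is invariant under each of $R_r$ and $R_r^\pi$ separately. Everything then reduces to recognising when a collection of cosets or orbits is a block system for one of the two regular groups, for which \Cref{block-form} is exactly the right tool. The one extra group-theoretic ingredient I would isolate at the outset is that any odd-order subgroup of a dihedral group of order $2k$ with $k$ odd lies inside the cyclic rotation subgroup and is therefore normal; this is what makes ``orbits'' and ``blocks'' interchangeable in the cyclic-of-odd-order case.

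For the first statement I would set $\mathcal B=\{xH_1\alpha:\alpha\in R_r\}$ and $\mathcal C=\{xH_2\gamma:\gamma\in R_r^\pi\}$. By \Cref{block-form}, applied to the regular groups $R_r$ and $R_r^\pi$ respectively, $\mathcal B$ is an $R_r$-invariant partition of $\Omega$ and $\mathcal C$ is an $R_r^\pi$-invariant partition of $\Omega$. The hypothesis says precisely that every block of $\mathcal B$ is also a block of $\mathcal C$. Since the blocks of $\mathcal B$ are pairwise disjoint and already cover $\Omega$, and the blocks of $\mathcal C$ are likewise a partition of $\Omega$, no block of $\mathcal C$ can be omitted; thus $\mathcal B=\mathcal C$. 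Being simultaneously $R_r$-invariant (as $\mathcal B$) and $R_r^\pi$-invariant (as $\mathcal C$), this common partition is $G$-invariant by the principle above.

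For the second statement I would first apply the odd-order observation: $H_1$ has odd order, so $H_1\trianglelefteq R_r$, and by the last part of \Cref{block-form} its orbits are exactly the blocks of the $R_r$-invariant partition $\{xH_1\alpha:\alpha\in R_r\}$. For the forward direction, suppose $H_1$ and some $H_2\le R_r^\pi$ have the same orbits. Comparing the orbit through $x$ gives $xH_1=xH_2$, and regularity forces $|H_2|=|H_1|$, which is odd; hence $H_2\trianglelefteq R_r^\pi$, so by \Cref{block-form} the common orbits form an $R_r^\pi$-invariant partition, and therefore a $G$-invariant one. For the converse, suppose the orbits of $H_1$ form a $G$-invariant, hence $R_r^\pi$-invariant, partition $\mathcal B$. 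By \Cref{block-form-not-regular} applied to $R_r^\pi$ there is some $H_2\le R_r^\pi$ with $\mathcal B=\{xH_2\gamma:\gamma\in R_r^\pi\}$; taking $\gamma$ to be the identity shows the block through $x$ is $xH_2$, which equals the $H_1$-orbit $xH_1$. Again $|H_2|=|H_1|$ is odd, so $H_2\trianglelefteq R_r^\pi$, and by \Cref{block-form} its orbits are exactly the blocks of $\mathcal B$, i.e.\ the orbits of $H_1$.

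I expect the only real subtlety to lie in the converse of the second statement: one must extract a subgroup $H_2\le R_r^\pi$ from an abstractly given $R_r^\pi$-invariant partition and then confirm that the \emph{orbits} of $H_2$ (not merely some block system assembled from its cosets) coincide with those of $H_1$. The odd-order-implies-normal observation is precisely what removes this obstacle, so I would state and justify it first. The remaining steps are bookkeeping with \Cref{block-form} and the elementary fact that two partitions of $\Omega$ which share all the blocks of one of them must be equal.
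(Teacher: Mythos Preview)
Your proof is correct and follows essentially the same approach as the paper: use \Cref{block-form} to see that the coset systems are $R_r$- and $R_r^\pi$-invariant partitions respectively, argue they coincide, and conclude $G$-invariance; for the second part, invoke the odd-order-implies-normal fact for dihedral groups and \Cref{block-form-not-regular} exactly as you do. Your write-up is slightly more explicit (e.g.\ spelling out why the two partitions must be equal and isolating the block through $x$), but there is no substantive difference in strategy.
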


\begin{proof}
By~\Cref{block-form}, since $H_1 \le R_r$, $\{x{H_1\alpha}: \alpha \in R_r\}$ is an $R_r$-invariant partition. Likewise, $\{x{H_2\beta}: \beta \in R_r^\pi\}$ is an $R_r^\pi$-invariant partition. Since these partitions coincide, the partition is
 invariant under both $R_r$ and $R_r^\pi$. As $G=\langle R_r,R_r^\pi\rangle$, it must be invariant under $G$.

Since any cyclic subgroup of odd order in a dihedral group is normal, if $H_1$ is such a subgroup then $H_1 \triangleleft R_r$. If $H_2$ has the same orbits then due to the regular actions of $R_r$ and $R_r^\pi$, we must have $|H_2|=|H_1|$ is odd, so $H_2$ is cyclic and $H_2 \triangleleft R_r^\pi$. Since the orbits of $H_1$ and $H_2$ coincide, by~\Cref{block-form} these orbits form a $G$-invariant partition.

Conversely, if the orbits of $H_1$ form a $G$-invariant partition then they form a $R_r^\pi$-invariant partition. Since these orbits have cardinality $|H_1|$, by~\Cref{block-form-not-regular} they must be the collection $\{y{H_2\beta}: \beta \in R_r^\pi\}$ for some $H_2 \le R_r^\pi$, and furthermore $|H_2|=|H_1|$ is odd. The odd order forces $H_2$ to be a normal cyclic subgroup of $R_r^\pi$, so by~\Cref{block-form} the blocks are the orbits of $H_2$.
\end{proof}

It is always the case (and easy to see) that the intersection of blocks in two $G$-invariant partitions, is a block of a $G$-invariant partition. In our situation with dihedral groups, we can say something similar about combinations of blocks.

\begin{lem}\label{block-union}
Using~\Cref{notn-1}, suppose that $G$ has an invariant partition consisting of $2$ blocks of cardinality $p_1 \cdots p_s$. Suppose also that $\mathcal C$ and $\mathcal D$ are $G$-invariant partitions with blocks of cardinality $a$ and $b$ respectively. Then there is also a $G$-invariant partition with blocks of cardinality $\lcm(a,b)$. A block of this partition can be formed by fixing $C \in \mathcal C$ and taking the union of every $D \in \mathcal D$ such that $D \cap C \neq \emptyset$.
\end{lem}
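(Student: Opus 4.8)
The plan is to translate the statement into the subgroup language of \Cref{block-form}. Since $R_r \le G$ acts regularly, any $G$-invariant partition is in particular $R_r$-invariant, so there are subgroups $H_C, H_D \le R_r$ with $|H_C|=a$ and $|H_D|=b$ whose right cosets (through a fixed base point, which I identify with the identity of $R$) are the blocks of $\mathcal C$ and $\mathcal D$. With this identification the block $C$ through the base point is $H_C$, a block $D$ meets $C$ exactly when $D=H_D\gamma$ with $\gamma \in H_DH_C$, and hence
$$F = \bigcup\{D \in \mathcal D : D \cap C \neq \emptyset\} = H_DH_C,$$
a set of cardinality $ab/|H_C \cap H_D|$. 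So the whole statement reduces to two claims: that $H_DH_C$ is a subgroup of order $\lcm(a,b)$, equivalently that $|H_C \cap H_D| = \gcd(a,b)$; and that the resulting coset partition is $G$-invariant. The second is automatic once $F$ is a subgroup, because the coset partition of $\langle H_C, H_D\rangle$ is exactly the join of $\mathcal C$ and $\mathcal D$ in the partition lattice, and the join of two $G$-invariant partitions is $G$-invariant. Dually, $|H_C \cap H_D| = \gcd(a,b)$ says precisely that every $D$ meeting $C$ meets it in $\gcd(a,b)$ points, so that $F$ is a union of $a/\gcd(a,b)$ blocks and $|F|=\lcm(a,b)$; this intersection identity is the real content.

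I would prove $|H_C \cap H_D| = \gcd(a,b)$ by splitting on the parities of $a$ and $b$, using that $R_r$ is dihedral with cyclic rotation subgroup $N = \langle \rho_1,\dots,\rho_s\rangle \cong \mathbb{Z}_k$ of squarefree odd order $k$: every odd-order subgroup lies in $N$ and is normal, while every even-order subgroup is dihedral, meeting $N$ in its rotation part of index two. When $a$ and $b$ are both odd, $H_C,H_D \le N$ are subgroups of a cyclic group of squarefree order, so $H_CH_D=\langle H_C,H_D\rangle$ is the subgroup of order $\lcm(a,b)$ and $|H_C \cap H_D|=\gcd(a,b)$ follows from the coprime prime decomposition of $N$. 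When exactly one of them, say $H_D$, is odd, it is normal and contained in $N$, so $H_DH_C=H_CH_D$ is a subgroup; writing $K_C=H_C\cap N$ for the rotation part of the dihedral group $H_C$, the rotation part of $\langle H_C,H_D\rangle$ is $\langle K_C,H_D\rangle$, and the single factor of two contributed by the reflection in $H_C$ matches the single factor of two in $\lcm(a,b)$ (as $b$ is odd), giving the required orders.

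The main obstacle is the case in which $a$ and $b$ are both even, so that $H_C$ and $H_D$ are both dihedral, with rotation parts $K_C,K_D \le N$ and reflections $t_C \in H_C$, $t_D \in H_D$. Here $\langle H_C,H_D\rangle$ is dihedral with rotation part $\langle K_C,K_D,t_Ct_D\rangle$, and $|H_C \cap H_D|=\gcd(a,b)=2\gcd(|K_C|,|K_D|)$ holds if and only if $H_C$ and $H_D$ share a reflection, equivalently $t_Ct_D \in \langle K_C,K_D\rangle$; otherwise $H_DH_C$ is not even a subgroup and $|F|$ is strictly larger than $\lcm(a,b)$. This is exactly where the hypothesis on the two-block partition $\mathcal E$ must enter, and it is the step I expect to be hardest. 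I would exploit $\mathcal E$ as follows: being $G$-invariant it is simultaneously $R_r$- and $R_r^\pi$-invariant, so by \Cref{block-form} its block through the base point is both the rotation subgroup of $R_r$ and the rotation subgroup of $R_r^\pi$; thus these two rotation subgroups coincide as a subset of $\Omega$ and, complementarily, $R_r$ and $R_r^\pi$ share the same set of reflections.

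Using this common rotation/reflection dichotomy, I would first apply the odd case to the rotation-part partitions (the common refinements of $\mathcal C$ and of $\mathcal D$ with $\mathcal E$, whose blocks are the cosets of $K_C$ and of $K_D$) to obtain the $G$-invariant partition $\mathcal M$ whose blocks are the cosets of $\langle K_C,K_D\rangle$. Passing to the induced action on $\mathcal M$, the partitions $\mathcal C$ and $\mathcal D$ descend to $G$-invariant partitions into blocks of size two, and the crux becomes showing that these two two-block refinements must coincide, which forces $t_Ct_D \in \langle K_C,K_D\rangle$. I expect this forced coincidence to be the genuinely delicate point: it fails for general regular pairs $R_r,R_r^\pi$ (for instance it can fail when $G$ is the full holomorph-type group generated by the left and right regular representations), so its proof must use the two-block hypothesis together with the finer features of the setup in \Cref{notn-1}, and it is here that the argument will require the most care.
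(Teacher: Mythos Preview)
Your subgroup translation is sound and the odd/odd and odd/even cases go through as you describe. The gap is exactly where you flag it: in the even/even case you need $t_Ct_D \in \langle K_C,K_D\rangle$, and this can genuinely fail under the stated hypotheses. Take $s=1$, $R_r=D_{2p}$, and $R_r^\pi=R_r$ so that $G=R_r$; then for any two distinct reflections $t_C,t_D$ the coset partitions of $\{1,t_C\}$ and $\{1,t_D\}$ are both $G$-invariant with $a=b=2$, yet $K_C=K_D=1$ and $t_Ct_D\neq 1$. In that example $H_DH_C$ has four elements, so the set described in the last sentence of the lemma is \emph{not} a block of a partition with blocks of cardinality $\lcm(a,b)=2$. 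Your planned reduction to the quotient by $\langle K_C,K_D\rangle$ therefore cannot succeed in general, and no amount of further care will force the two size-two partitions to coincide.

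The paper avoids this entirely by a different route. It first intersects $\mathcal C$ and $\mathcal D$ with the two-block partition $\mathcal F$ to obtain $G$-invariant partitions $\mathcal C',\mathcal D'$ with odd block cardinalities, invokes \Cref{orbits-blocks} to realise their blocks as orbits of cyclic subgroups in both $R_r$ and $R_r^\pi$, and takes the product of those cyclic subgroups to get a $G$-invariant partition $\mathcal E$ with blocks of the odd part of $\lcm(a,b)$. Then, if $a$ or $b$ is even, it picks a reflection $\tau\in R_r$ fixing the block of $\mathcal C$ through $x$ and verifies by a direct case analysis that $\{(E\cup E\tau)g:g\in G\}$ is a $G$-invariant partition with blocks of cardinality $\lcm(a,b)$. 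This never requires comparing $t_C$ with $t_D$. Note that the paper's argument, like your analysis shows, does not actually establish the final sentence of the lemma in the even/even case; but the only place the lemma is invoked later (in the proof of \Cref{reordering}) both partitions have odd block cardinality, where your argument and the paper's agree and the description of the block as $H_DH_C$ is correct.
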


\begin{proof}
Let $\mathcal F=\{F_1, F_2\}$ be the $G$-invariant partition with $2$ blocks. If the blocks of either $\mathcal C$ or $\mathcal D$ have even cardinality, take their intersections with $F_1$ and $F_2$ to get $G$-invariant partitions $\mathcal C'$ and $\mathcal D'$ the cardinality of whose blocks is the largest odd divisor of the original block cardinality. (Since $|R_r|$ is squarefree, the original cardinality was either odd or twice an odd number, so taking the intersections with $F_1$ and $F_2$ does accomplish this.) By~\Cref{block-form} the blocks of $\mathcal C'$ and $\mathcal D'$ are the orbits of some subgroup of the cyclic subgroup of index $2$ in $R_r$, say $\langle \alpha_1\rangle$ and $\langle \alpha_2\rangle$ where $\alpha_1, \alpha_2 \in R_r$. By~\Cref{orbits-blocks}, we have $\langle \alpha_1 \rangle$ has the same orbits as $\langle \gamma_1\rangle$ and $\langle \alpha_2 \rangle$ has the same orbits as $\langle \gamma_2 \rangle $ for some $\gamma_1, \gamma_2 \in R_r^\pi$.

Now again by~\Cref{block-form}, the orbits of $\langle \alpha_1\alpha_2\rangle$ (a normal subgroup of $R_r$) are invariant under $R_r$, and the orbits of $\langle \gamma_1\gamma_2\rangle$ are invariant under $R_r^\pi$. Since the orbits of $\langle \alpha_1\rangle$ and $\langle \gamma_1 \rangle$ coincide as do the orbits of $\langle \alpha_2 \rangle$ and $\langle \gamma_2 \rangle$, the orbits of $\langle \alpha_1,\alpha_2 \rangle$ and $\langle \gamma_1, \gamma_2 \rangle$ also coincide. So these orbits are invariant under both $R_r$ and $R_r^\pi$ and therefore under $G$. This completes the proof if $a$ and $b$ were odd, since $|\langle \alpha_1,\alpha_2\rangle|=|\alpha_1\alpha_2|=\lcm(a,b)$. If either $a$ or $b$ was even, then the blocks of this partition are half the desired cardinality.

Without loss of generality, suppose $a$ is even. Let $x \in \Omega$. Then there is some $\tau \in R_r$ such that $x\tau$ is in the same block of $\mathcal C$ as $x$. We claim that if $\mathcal E$ is the $G$-invariant partition we just found and $x \in E \in \mathcal E$, then $\{(E \cup E \tau)g: g \in G\}$ is a $G$-invariant partition. Suppose that $g \in G$ and $(E \cup E\tau)\cap (Eg \cup E\tau g) \neq \emptyset$. Without loss of generality, since $\mathcal E$ is $G$-invariant, the only way we can have $E \cup E \tau \neq Eg \cup E\tau g$ is if either $E=Eg$ and $E\tau \cap E\tau g = \emptyset$, or if $E=E\tau g$ and $E \tau \cap Eg = \emptyset$. In the former case, $x \in E=Eg$ so since $x \tau$ is in the same block of $\mathcal C$ as $x$ and this block is fixed by $g$, we have $x \tau \in E\tau \cap E \tau g$, a contradiction. In the latter case, $x \in E=E\tau g$ and $x \tau$ is in the same block of $\mathcal C$ as $x$, and this block is fixed by $\tau g$. Thus $x \tau \tau g=xg$ is in $E\tau$ and $Eg$, again a contradiction. This gives us blocks of twice the previous cardinality, completing the proof.
\end{proof}

Sometimes one partition is a refinement of another; this leads to a partial order on partitions.

\begin{defn}
If $\mathcal B$ and $\mathcal C$ are both partitions of $\Omega$, we say that $\mathcal B \preceq \mathcal C$ if for every $B \in \mathcal B$, there is some $C \in \mathcal C$ such that $B \subseteq C$. In other words, $\mathcal B \preceq \mathcal C$ if each block of $\mathcal C$ is a union of blocks of $\mathcal B$.

If $\mathcal B \preceq \mathcal C$ but $\mathcal B \neq \mathcal C$ then we can write $\mathcal B \prec \mathcal C$.
\end{defn}

We now provide the new result of~\cite{DMS-CI3} that allows us to avoid making assumptions about the relative sizes of the primes dividing the order of our dihedral group. Their result (Corollary 4.6 of their paper) is stated in a broader context. Extracting our statement from their paper requires understanding that by their Definition 1.6, $\mathcal R_n$ includes dihedral groups of squarefree order, and noting that in the situation of dihedral groups of squarefree order, the Sylow $2$-subgroups are isomorphic to $\Z_2$, and therefore have trivial automorphism group, so the set of prime divisors of the order of this automorphism group is empty; this is $\pi(|\Aut(R_2)|)$ in their notation, so one of the hypotheses they require is automatically achieved in this context. Also since our groups have squarefree order, in their statement $e=1$, and in their notation $\Omega(2n)$ is the number of prime divisors of our $|\Omega|$, which is $s+1$.

\begin{thm}[Dobson, Muzychuk, Spiga,~\cite{DMS-CI3}]\label{blocks}
Let $R_r$ be a dihedral group of squarefree order acting regularly on the set\/ $\Omega$, and $R_r^\pi$ another such group. Then there exists $\beta \in \langle R_r, R_r^\pi\rangle$ such that the group $\langle R_r, R_r^{\pi \beta}\rangle$ has a sequence of normal $G$-invariant partitions $\mathcal B_0 \prec \mathcal B_1 \prec \cdots \prec\mathcal B_{s+1}$, where $\mathcal B_0=\Omega$ consists of singleton sets, and $\mathcal B_{s+1}$ consists of a single block. Additionally, $\mathcal B_s$ consists of $2$ blocks of cardinality $p_1 \cdots p_s$.
\end{thm}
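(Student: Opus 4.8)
The plan is to read off the candidate partitions from the characteristic subgroups of the rotation part of $R_r$ and then to force them to be respected by $R_r^\pi$ as well. Write $C=\langle\rho_1,\dots,\rho_s\rangle\le R_r$ for the rotation subgroup, a cyclic group of order $k=p_1\cdots p_s$. Since $C$ is cyclic it has a unique subgroup of each order dividing $k$, and every such subgroup, having odd order, is normal in $R_r$ (this is the fact used in the proof of \Cref{orbits-blocks}). Thus for each $1\le i\le s$ the subgroup $\langle\rho_1,\dots,\rho_i\rangle$ is normal in $R_r$ of order $p_1\cdots p_i$, and by \Cref{block-form} its orbits form an $R_r$-invariant partition into $2p_{i+1}\cdots p_s$ blocks of cardinality $p_1\cdots p_i$; taking $i=s$ gives the two blocks of cardinality $k$. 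These partitions are automatically invariant under $R_r$, so the entire task is to choose one $\beta\in G$ after which the analogous partitions of $R_r^{\pi\beta}$ coincide with them, whence they become $G$-invariant and, being orbits of normal subgroups on each side, normal in $G$.

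I would first use $\beta$ only to align the two coarsest partitions. Both $R_r$ and $R_r^\pi$ carry a canonical partition of $\Omega$ into two blocks of cardinality $k$, namely the two orbits of their respective rotation subgroups; I would produce $\beta\in G$ moving the one for $R_r^\pi$ onto the one for $R_r$, so that after replacing $R_r^\pi$ by $R_r^{\pi\beta}$ the common two-block partition $\mathcal B_s$ is $G$-invariant and normal. Passing to the index-two stabiliser of the two blocks and restricting to a single block $B$ of cardinality $k$, the groups $R_r$ and $R_r^{\pi\beta}$ now induce on $B$ two \emph{regular cyclic} groups of squarefree order $k$, so the problem on $B$ is the (classical) cyclic one: the structure of a group generated by two regular cyclic groups of squarefree order provides, for each prime $p_i$, a block system whose blocks are the orbits of the order-$p_i$ subgroup, invariant under both induced cyclic groups, and these systems nest by coprimality. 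Because the blocks all lie inside the blocks of $\mathcal B_s$ and arise from subgroups of $C$ that are normal in $R_r$ (hence respected by the reflection that interchanges the two halves), they assemble into normal $G$-invariant partitions of all of $\Omega$, with \Cref{orbits-blocks} recognising each as the orbit set of a subgroup on each side. Then \Cref{block-union} lets me take successive unions to realise the nested chain of cardinalities $p_1,\,p_1p_2,\dots,p_1\cdots p_s=k$. With $\mathcal B_0$ the singletons and $\mathcal B_{s+1}=\Omega$ both trivially normal and $G$-invariant, this yields the required chain $\mathcal B_0\prec\cdots\prec\mathcal B_{s+1}$.

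The main obstacle is producing the single aligning element $\beta$ and, with it, enough control of $G=\langle R_r,R_r^\pi\rangle$ to guarantee the two-block partition in the first place: a priori $G$ could fail to be imprimitive with two blocks, and even granting the reduction to a block of cardinality $k$ one must understand the group generated by two regular cyclic groups of squarefree order well enough to extract the refinement simultaneously for all primes. This structural analysis of the overgroup is the substance of the theorem, and it is exactly what the general result of Dobson, Muzychuk and Spiga carries out; in the present dihedral, squarefree setting their auxiliary hypotheses degenerate (the Sylow $2$-subgroup is $\Z_2$, with trivial automorphism group, so the associated set of primes is empty, and their exponent parameter equals $1$), so their Corollary 4.6 applies verbatim to deliver $\beta$ together with the full chain.
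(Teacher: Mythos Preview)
This theorem is not proved in the paper at all: it is quoted from \cite{DMS-CI3} as a black box, with the surrounding text only explaining how to specialize their more general Corollary~4.6 to the dihedral, squarefree setting (Sylow $2$-subgroup $\Z_2$, hence trivial automorphism group; squarefree order forces their $e=1$). Your proposal ultimately does exactly the same thing in its final paragraph, so in that sense it matches the paper.

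The first two paragraphs of your proposal, however, are a heuristic sketch rather than a proof, and you should not present them as more. You correctly flag the two genuine obstacles yourself: there is no a priori reason that $G=\langle R_r,R_r^\pi\rangle$ admits the two-block partition $\mathcal B_s$ (the rotation orbits of $R_r$ and of $R_r^\pi$ need not coincide, and producing a $\beta\in G$ that aligns them is already nontrivial), and even after restricting to a block of size $k$, the claim that two regular cyclic groups of squarefree degree generate a group with a full chain of nested normal block systems is itself a substantial structural theorem, not something one reads off from coprimality. Both of these are precisely what the Dobson--Muzychuk--Spiga result supplies. So your sketch is useful intuition for \emph{why} the statement is plausible, but the actual content is carried entirely by the citation, just as in the paper. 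Also note a small slip: \Cref{block-union} as stated already \emph{assumes} the existence of the two-block partition $\mathcal B_s$, so you cannot invoke it to help build that partition.
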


Notice that the number of these properly nested $G$-invariant partitions forces the cardinality of the blocks of $\mathcal B_i$ to be a prime multiple of the cardinality of the blocks of $\mathcal B_{i-1}$ for each $1 \le i \le s+1$. After relabeling the primes if necessary, we may conclude that $\mathcal B_i$ consists of $2p_{i+1}\cdots p_s$ blocks of cardinality $p_1 \cdots p_i$.

Since each $\mathcal B_i$ consists of the orbits of a normal subgroup of $G$, it must consist of the orbits of the unique (normal) subgroup of $R_r$ that has order $p_1 \cdots p_i$, and also of the unique (normal) subgroup of $R_r^\pi$ that has order $p_1 \cdots p_i$. 

\begin{cor}\label{better-blocks}
Let $R_r$ be a dihedral group of squarefree order acting regularly on the set\/ $\Omega$, and $R_r^\pi$ another such group. Then there exists $\beta \in \langle R_r, R_r^\pi\rangle$ such that the group $\langle R_r, R_r^{\pi \beta}\rangle$ has a sequence of normal $\langle R_r, R_r^{\pi \beta}\rangle$-invariant partitions $\mathcal B_0 \prec \mathcal B_1 \prec \cdots \prec\mathcal B_{s+1}$, where $\mathcal B_0=\Omega$ consists of singleton sets, and $\mathcal B_{s+1}$ consists of a single block. Additionally, $\mathcal B_s$ consists of $2$ blocks of cardinality $p_1 \cdots p_s$.

Furthermore, we may choose $\beta$ so that for each $1\le i \le s$, if $\rho_i$ has order $p_i$ in $R_r$ and $\sigma_i$ has order $p_i$ in $R_r^\pi$, then for any fixed block $B$ of $\mathcal B_{i-1}$, there is some $k_B$ such that for every $j$, $B{(\sigma_i^\beta)^j}=B{(\rho_i^{k_B})^j}$. 
\end{cor}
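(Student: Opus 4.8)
The plan is to start from the partition structure that \Cref{blocks} already supplies and then refine the choice of $\beta$ so as to synchronise the two groups one prime at a time. First I would invoke \Cref{blocks} to fix a $\beta\in\langle R_r,R_r^\pi\rangle$ giving the nested normal partitions $\mathcal B_0\prec\cdots\prec\mathcal B_{s+1}$, relabel the primes so that $\mathcal B_i$ consists of $2p_{i+1}\cdots p_s$ blocks of cardinality $p_1\cdots p_i$, and record (as already observed just before the corollary) that each $\mathcal B_i$ is simultaneously the orbit partition of the unique subgroup $\langle\rho_1,\ldots,\rho_i\rangle\le R_r$ and of the unique subgroup $\langle\sigma_1^\beta,\ldots,\sigma_i^\beta\rangle\le R_r^{\pi\beta}$ of order $p_1\cdots p_i$. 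I would then reformulate the ``furthermore'' clause in the quotient modulo $\mathcal B_{i-1}$: fixing a block $C$ of $\mathcal B_i$, both $\rho_i$ and $\sigma_i^\beta$ preserve $C$ and permute the $p_i$ blocks of $\mathcal B_{i-1}$ lying inside $C$ as a single $p_i$-cycle, and the clause asserts precisely that $\sigma_i^\beta$ induces a power of the $p_i$-cycle induced by $\rho_i$ on those sub-blocks.

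So the genuine content is to make two $p_i$-cycles on the $p_i$ sub-blocks agree up to a power, on every block $C$ and for every $i$. Applying \Cref{orbits-blocks} in the quotient by $\mathcal B_{i-1}$ already tells me that $\langle\rho_i\rangle$ and $\langle\sigma_i^\beta\rangle$ have the same orbits there, so the two induced cyclic groups are regular subgroups, of the same prime order $p_i$, of the symmetric group on the $p_i$ blocks of $\mathcal B_{i-1}$ inside $C$. To upgrade ``same orbit'' to ``equal subgroup'' I would conjugate $R_r^{\pi\beta}$ by an element of the normal subgroup $N_i\trianglelefteq G$ whose orbits are the blocks of $\mathcal B_i$: since $N_i$ fixes each block of $\mathcal B_i$ setwise, such a conjugation leaves the whole chain of partitions, and all the containments asserted in the first part, intact, and on each $C$ it has a chance to carry $\langle\sigma_i^\beta\rangle$ onto $\langle\rho_i\rangle$ because the two induced cyclic groups are Sylow $p_i$-subgroups of the induced image of $N_i$ and hence conjugate there.

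I would carry this out from the top level downwards, $i=s,s-1,\ldots,1$. The purpose of this order is non-interference: the conjugating element chosen at level $i$ lies in $N_i$, hence fixes every block of $\mathcal B_j$ with $j\ge i$ setwise and so induces the identity on each quotient modulo such $\mathcal B_j$; since the alignment already achieved at levels $i+1,\ldots,s$ is a statement about those quotients, it is preserved, while any disruption at the still-untreated lower levels will be repaired when those levels are reached. Composing these conjugations with the original $\beta$, all factors lying in $G=\langle R_r,R_r^\pi\rangle$, then yields the single required $\beta$.

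The step I expect to be the main obstacle is the middle one: producing a \emph{single} element of $N_i$ that simultaneously aligns $\langle\sigma_i^\beta\rangle$ with $\langle\rho_i\rangle$ on \emph{all} blocks of $\mathcal B_i$ at once, rather than block by block. Sylow's theorem supplies a conjugator on each individual block, but stitching these into one element of $G$ requires controlling the induced action of $N_i$ on the sub-blocks, in particular ruling out or otherwise handling the case where this induced action is so large (for instance $2$-transitive of prime degree) that the two order-$p_i$ subgroups need not be conjugate by a single coherent element. Ensuring the conjugator lies inside $G$ rather than merely inside $\Sym(\Omega)$, and making the choices at the various levels mutually consistent, is where the argument will demand the most care.
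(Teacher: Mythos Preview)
Your overall plan coincides with the paper's proof: start from \Cref{blocks}, then run a downward induction $i=s,s-1,\ldots,1$, at each stage conjugating $R_r^{\pi\beta}$ by an element that fixes every block of $\mathcal B_i$ setwise so as to align $\langle\sigma_i^\beta\rangle$ with $\langle\rho_i\rangle$ on the $\mathcal B_{i-1}$-blocks, while leaving the alignments at levels $>i$ undisturbed.

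The obstacle you flag at the end --- producing a \emph{single} conjugator in $G$ that works on all blocks of $\mathcal B_i$ simultaneously, rather than stitching together block-by-block Sylow conjugators --- is exactly what the paper handles, and it dissolves once you apply Sylow's theorem globally rather than locally. Let $G_i\le\langle R_r,R_r^{\pi\beta}\rangle$ be the setwise stabiliser of every block of $\mathcal B_i$ (this is what you called $N_i$, or more precisely contains it). Both $\rho_i$ and $\sigma_i^\beta$ lie in $G_i$, so they lie in Sylow $p_i$-subgroups $P_1,P_2$ of $G_i$. A single $\gamma\in G_i$ with $P_2^\gamma=P_1$ then does the job everywhere at once: after conjugation both $\rho_i$ and $\sigma_i^{\beta\gamma}$ lie in the one $p_i$-group $P_1$. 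Now restrict $P_1$ to its action on the $p_i$ blocks of $\mathcal B_{i-1}$ inside any fixed block $C\in\mathcal B_i$: this is a $p_i$-group acting on a set of size $p_i$, hence cyclic of order $p_i$. So on that set $\langle\rho_i\rangle$ and $\langle\sigma_i^{\beta\gamma}\rangle$ induce the \emph{same} cyclic group, giving the required $k_C$. No $2$-transitivity issue arises because you never look at the full induced image of $G_i$ on the sub-blocks, only at the image of the Sylow $p_i$-subgroup $P_1$.
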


\begin{proof}
The first paragraph of this statement is~\Cref{blocks}. 

Take $\beta_s$ to be the $\beta$ given by~\Cref{blocks}. Let $G_s$ be the subgroup of $\langle R_r, R_r^{\pi \beta_s}\rangle$ that fixes each block of $\mathcal B_s$ setwise. We will work by downward induction to define $\beta_{i-1}$ and $G_{i-1}$ from $\beta_i$ and $G_i$, where $i \in \{1, \ldots, s\}$, and $G_i$ fixes every block of $\mathcal B_i$ setwise. Then we will show that $\beta'=\beta_s\cdots\beta_0$ has the desired property.

With $\beta_i$ and $G_i$ defined, let $P_{1,i}$ and $P_{2,i}$ be Sylow $p_i$-subgroups of $G_{i}$ that contain $\rho_i$ and $\sigma_i^{\beta_s\cdots \beta_{i}}$ respectively. By Sylow's Theorems, there is some $\beta_{i-1} \in G_i$ such that $P_{2,i}^{\beta_{i-1}}=P_{1,i}$, so $\sigma_i^{\beta_s\cdots\beta_{i-1}} \in P_{1,i}$. Furthermore, since $\beta_{i-1}$ fixes every block of $\mathcal B_j$ for $i \le j \le s$, we have $\sigma_j^{\beta_s\cdots \beta_{i-1}}=\sigma_j^{\beta_s\cdots \beta_{j-1}}$ in its action on the blocks of $\mathcal B_{j-1}$.
Let $G_{i-1}$ be the subgroup of $\langle R_r, R_r^{\pi \beta_s\ldots \beta_{i-1}}\rangle$ that fixes every block of $\mathcal B_{i-1}$ setwise. 

When this has been completed, note that for any $j$, $\sigma_j^\beta$ has the same action as $\sigma_j^{\beta_s\cdots \beta_{j-1}}$ on the blocks of $\mathcal B_{j-1}$. 

For any $i \in \{1, \ldots, s\}$, and any fixed block $B$ of $\mathcal B_{i-1}$, there is some block of $\mathcal B_i$ that is the union of $\{B^{\rho_i^j}: 0 \le j \le p_i-1\}$. Now, $P_{1,i}$ is a $p_i$-group acting with degree $p_i$ on this set of $p_i$ blocks of $\mathcal B_{i-1}$, so must be acting as a cyclic group of order $p_i$. Since it contains $\langle \rho_i \rangle$, we must have $P_{1,i}=\langle \rho_i\rangle$ on this set. However, since $P_{1,i}$ also contains $\langle \sigma_i^{\beta'}\rangle$, we have $\langle \sigma_i^{\beta'}\rangle=P_{1,i}=\langle \rho_i\rangle$. Thus there is some $k_B$ such that  for every $j$, $B{(\sigma_i^{\beta'})^j}=B{(\rho_i^{k_B})^j}$. Replacing $\beta$ by $\beta'$ achieves the result.
\end{proof}

With this result, we are able to make some updates to our notation. The following notation includes~\Cref{notn-1} and more. To achieve the desired properties for the distinguished point $x$, we may replace each $\sigma_i$ by some power of itself if necessary.

\begin{notn}\label{notn-2}
Henceforth in this paper, we use the following notation:
\begin{itemize}
\item $p_1, p_2, \ldots, p_s$ are distinct primes;
\item $C_r=\langle \rho_1,\ldots, \rho_s\rangle$ is cyclic, with $|\rho_i|=p_i$ for each $1 \le i \le s$;
\item $C_r^\pi=\langle \sigma_1,\ldots,\sigma_s\rangle$ is cyclic, with $|\sigma_i|=p_i$ for each $1 \le i \le s$;
\item $R_r=\langle C_r,\tau_1\rangle$ where $|\tau_ 1|=2$ and $\alpha^{\tau_1}=\alpha^{-1}$ for every $\alpha \in C_r$;
\item $R_r^\pi=\langle C_r^\pi,\tau_2\rangle$ where $|\tau_ 2|=2$ and $\gamma^{\tau_2}=\gamma^{-1}$ for every $\gamma \in C_r^\pi$;
\item  both $R_r$ and $R_r^\pi$ are permutation groups acting regularly on the set $\Omega$ of cardinality $2p_1\cdots p_s$;
\item $x \in \Omega$ is a predetermined point;
\item $G=\langle R_r,R_r^\pi \rangle$;
\item $G$ admits invariant partitions $\mathcal B_0 \prec \cdots \prec \mathcal B_s$ such that $\mathcal B_0$ is the partition of $\Omega$ into singletons, and for each $1 \le i \le s$:
\begin{itemize}
\item $\mathcal B_i$ consists of $2p_{i+1}\cdots p_s$ blocks of cardinality $p_1\cdots p_i$;
\item $\mathcal B_i$ consists of the orbits of $\langle \rho_1, \ldots, \rho_i\rangle$, which are also the orbits of $\langle \sigma_1,\ldots, \sigma_i \rangle$;
\item  the block of $\mathcal B_{i-1}$ that contains $x{\sigma_i}$ is the same as the block of $\mathcal B_{i-1}$ that contains $x{\rho_i}$; and 
\item for any point $y \in \Omega$ lying in the block $B$ of $\mathcal B_i$, there is some $1 \le j_B\le p_i-1$ depending only on $B$, such that the block of $\mathcal B_{i-1}$ that contains $y{\sigma_i}$ is the same as the block of $\mathcal B_{i-1}$ that contains $y{\rho_i^{j_B}}$.
\end{itemize}
\item For every $y \in \Omega$ and every $1 \le i \le s$, we use $B_{i,y}$ to denote the block of $\mathcal B_i$ that contains the point $y$. If we have some other $G$-invariant partition denoted by some script letter, then we use a roman version of that letter with the subscript $y$ to denote the block of that partition that contains $y$. For example, in $\mathcal C$, we use $C_y$.
\end{itemize}
\end{notn}

In many situations, we may wish to work with a different ordering for the primes $p_1, \ldots p_s$. As long as all of the properties of~\Cref{notn-2} still hold, all of the results that follow still apply to this reordering. It will be important to our proofs to understand when we can do this; this is addressed in our next result. Essentially, this explains that whenever we have a $G$-invariant partition, we can pull the prime divisors of its block cardinalities to the front of our ordering, replacing some $\mathcal B_i$ by this $G$-invariant partition.

\begin{lem}\label{reordering}
Using~\Cref{notn-2}, let $\mathcal C$ be a $G$-invariant partition such that $\mathcal C \preceq \mathcal B_s$. Then there is a permutation $\varphi$ of $\{1,\ldots, s\}$ so that $G$ admits invariant partitions $\mathcal C_0 \prec \cdots \prec \mathcal C_s$ with the following properties:
\begin{itemize}
\item $\mathcal C_0=\mathcal B_0$ and $\mathcal C_s=\mathcal B_s$;
\item there is some $t$ such that $\mathcal C_t=\mathcal C$;
\item for each $1 \le i \le s$, $\mathcal C_i$ consists of $2p_{(i+1)\varphi}\cdots p_{s\varphi}$ blocks of cardinality $p_{1\varphi} \cdots p_{i\varphi}$;
\item for each $1 \le i \le s$, $\mathcal C_i$ consists of the orbits of $\langle \rho_{1\varphi},\ldots, \rho_{i\varphi}\rangle$, which are also the orbits of $\langle \sigma_{1\varphi},\ldots, \sigma_{i\varphi}\rangle$;
\item for each $1 \le i \le s$, the block of $\mathcal C_{i-1}$ that contains $x{\sigma_{i\varphi}}$ is the same as the block of $\mathcal C_{i-1}$ that contains $x{\rho_{i\varphi}}$; and
\item for each $1 \le i \le s$, for any point $y \in \Omega$ lying in the block $C$ of $\mathcal C_i$, there is some $1 \le j_C \le p_{i\varphi}-1$ depending only on $C$, such that the block of $\mathcal C_{i-1}$ that contains $y{\sigma_{i\varphi}}$ is the same as the block of $\mathcal C_{i-1}$ that contains $y{\rho_{i\varphi}^{j_C}}$.
\end{itemize}
In short, \Cref{notn-2} holds for this new ordering of our primes and this new corresponding collection of nested partitions.
\end{lem}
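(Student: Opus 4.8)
The plan is to first pin down the structure of $\mathcal C$, then realize it inside a maximal chain built from a lattice of ``admissible'' index sets, and finally propagate the compatibility data of \Cref{notn-2} to the new chain.

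\textbf{Step 1: identifying $\mathcal C$.} Since $\mathcal C \preceq \mathcal B_s$ and each block of $\mathcal B_s$ has odd cardinality $p_1\cdots p_s$, every block of $\mathcal C$ has odd cardinality. Applying \Cref{block-form-not-regular} to the regular group $R_r$, the block of $\mathcal C$ through $x$ is $x{H}$ for some $H \le R_r$ of odd order; such an $H$ is a cyclic normal subgroup of $R_r$, hence $H = \langle \rho_j : j \in T\rangle$ for the set $T$ of indices of the primes dividing $|H|$. By \Cref{block-form}, $\mathcal C$ is the orbit partition of $H$, and by \Cref{orbits-blocks} these orbits coincide with those of $\langle \sigma_j : j \in T\rangle$. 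So $\mathcal C$ is the orbit partition of $\langle \rho_j : j \in T\rangle = \langle \sigma_j : j\in T\rangle$.

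\textbf{Step 2: the lattice of admissible sets and the choice of $\varphi$.} I would call $S \subseteq \{1,\dots,s\}$ \emph{admissible} if the orbits of $\langle \rho_j : j \in S\rangle$ form a $G$-invariant partition (equivalently, by \Cref{orbits-blocks}, coincide with the orbits of $\langle \sigma_j : j \in S\rangle$). The admissible sets are closed under intersection (the orbits of $\langle \rho_j : j \in S_1 \cap S_2\rangle$ are the common refinement of two $G$-invariant partitions) and, using the two-block partition $\mathcal B_s$ together with \Cref{block-union}, under union (the $\lcm$-partition produced there is exactly the orbit partition of $\langle \rho_j : j \in S_1 \cup S_2\rangle$). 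Every original prefix $\{1,\dots,i\}$ is admissible (its orbit partition is $\mathcal B_i$), and $T$ is admissible by Step 1. Intersecting the chain $\{1,\dots,i\}$ with $T$ produces admissible sets increasing from $\emptyset$ to $T$ one element at a time, while unioning $\{1,\dots,i\}$ with $T$ produces admissible sets increasing from $T$ to $\{1,\dots,s\}$ one element at a time. Concatenating these two ranges defines a permutation $\varphi$ whose every prefix $A_i := \{1\varphi,\dots,i\varphi\}$ is admissible, with $A_t = T$ where $t = |T|$. I then set $\mathcal C_i$ to be the orbit partition of $\langle \rho_{1\varphi},\dots,\rho_{i\varphi}\rangle$. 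Admissibility gives the $G$-invariance and the orbit characterisation (fourth bullet); the cardinalities (third bullet) follow since each block has size $\prod_{j \in A_i} p_j$; and $\mathcal C_0 = \mathcal B_0$, $\mathcal C_s = \mathcal B_s$, $\mathcal C_t = \mathcal C$ are immediate.

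\textbf{Step 3 (the main obstacle): the compatibility bullets.} It remains to verify the last two bullets, asserting that $\sigma_{i\varphi}$ acts as a fixed power of $\rho_{i\varphi}$ on the $\mathcal C_{i-1}$-blocks inside each $\mathcal C_i$-block, with that power equal to $1$ at the block of $x$. I expect this to be the delicate part, because the compatibility data handed to us by \Cref{better-blocks} (the fourth bullet of \Cref{notn-2}) controls $\sigma_m$ only relative to the \emph{original} partitions $\mathcal B_{m-1} \prec \mathcal B_m$, not the reordered ones. To bridge this, I would fix a block $F_1$ of $\mathcal B_s$, on which both $C_r$ and $C_r^\pi$ act regularly, and coordinatise $F_1$ by the primes (writing each point by its images in the cyclic factors of orders $p_1,\dots,p_s$), so that $\rho_j$ is translation in coordinate $j$. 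Writing $m = i\varphi$, the fourth bullet of \Cref{notn-2} says precisely that $\sigma_m$ fixes every coordinate $> m$ and shifts coordinate $m$ by an amount depending only on the coordinates $> m$. The $G$-invariance of $\mathcal C_{i-1}$ and $\mathcal C_i$ established in Step 2 forces $\sigma_m$ to preserve each $\mathcal C_i$-block $C$ and to carry $\mathcal C_{i-1}$-blocks to $\mathcal C_{i-1}$-blocks; since the $\mathcal C_{i-1}$-blocks inside $C$ are distinguished exactly by coordinate $m$, this well-definedness forces the shift to be independent of the coordinates that vary within $C$, hence constant on $C$. That constant is the required $j_C$, and evaluating at $x$ while invoking the third bullet of \Cref{notn-2} (which normalises the shift to $1$ at the block of $x$ for the original chain) yields $j_C = 1$ there. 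The crux is this last passage: extracting a \emph{block-constant} power from the a priori block-dependent shift using invariance of the reordered partitions, rather than any further conjugation, which is no longer available once $\beta$ has been fixed.
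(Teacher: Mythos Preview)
Your proposal is correct and follows essentially the same route as the paper's proof: both identify $\mathcal C$ as the orbit partition of $\langle \rho_j : j \in T\rangle$ for some index set $T$, build the new chain by intersecting the original $\mathcal B_i$ with $\mathcal C$ (for $i \le t$) and applying \Cref{block-union} to $\mathcal C$ and $\mathcal B_k$ (for $i > t$), and then verify the compatibility bullets. Your admissible-set lattice is a clean repackaging of exactly this construction, and your coordinate argument in Step~3 makes explicit what the paper dismisses as ``straightforward to show'' using the fact that $\rho_{i\varphi}$ and $\sigma_{i\varphi}$ already lie in a common cyclic group on the $\mathcal B_{i\varphi -1}$-blocks; you also correctly observe that the penultimate bullet comes for free from the third bullet of \Cref{notn-2}, whereas the paper hedges with ``if necessary we can replace each $\sigma_i$ by some power of itself''.
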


\begin{proof}
By~\Cref{block-form-not-regular}, since $\mathcal C$ is $R_r$-invariant, it consists of $\{x{H\gamma}: \gamma \in R_r\}$ for some $H \le R_r$. Since by hypothesis $\mathcal C \preceq \mathcal B_s$, we have $H \le C_r$ is cyclic and normal in $R_r$, and using~\Cref{block-form}, the orbits of $H$ are the blocks of $\mathcal C$. Likewise, since $\mathcal C\preceq \mathcal B_s$ is $R_r^\pi$-invariant, its blocks are the orbits of some cyclic $K \triangleleft R_r^\pi$.

Define $i_1, \ldots, i_t$ to be the values of $\{1, \ldots, s\}$, in ascending order, such that for $1 \le j \le t$, $\rho_{i_j}$ lies in $H$.
Now we define $\varphi$ as follows. For $1 \le j \le t$, define $j\varphi=i_j$. For $t<j \le s$, define $j\varphi$ to be the first value from $\{1, \ldots, s\}$ that does not appear in $\{1\varphi, \ldots, (j-1)\varphi\}$.

The first three points will follow immediately from the fourth together with the way we have chosen $i_1, \ldots, i_t$, so our first goal is to establish that for each $1\le j \le s$, if $\mathcal C_j$ consists of the orbits of $\langle \rho_{1\varphi},\ldots, \rho_{j\varphi}\rangle$, that these are also the orbits of $\langle \sigma_{1\varphi},\ldots, \sigma_{j\varphi}\rangle$, and that these partitions are $G$-invariant.

Suppose first that $j \le t$. Observe that $\mathcal B_{j\varphi}$ consists of the orbits of both $\langle \rho_1, \ldots, \rho_{j\varphi}\rangle$ and $\langle \sigma_1, \ldots, \sigma_{j\varphi}\rangle$, and $\mathcal C$ consists of the orbits of both $H$ and $K$. Therefore the intersection of $\langle \rho_1, \ldots, \rho_{j\varphi}\rangle$ with $H$ is a cyclic subgroup of odd order in $R_r$ that must have the same orbits as the intersection of $\langle \sigma_1, \ldots, \sigma_{j\varphi}\rangle$ with $K$, which is a cyclic subgroup of odd order in $R_r^\pi$. But these intersections are exactly $\langle \rho_{1\varphi},\ldots, \rho_{j\varphi}\rangle$ and $\langle \sigma_{1\varphi},\ldots, \sigma_{j\varphi}\rangle$. Thus the orbits of these two groups coincide, so by~\Cref{orbits-blocks} they form a $G$-invariant partition (which is $\mathcal C_j$).

Now suppose $j>t$. In this case, we apply~\Cref{block-union} to $\mathcal C$ and $\mathcal B_k$,  where $k$ is the $(j-t)$th value of $\{1, \ldots, s\}-\{i_1, \ldots, i_t\}$. The resulting $G$-invariant partition has blocks of cardinality $p_{1\varphi}\cdots p_{j\varphi}$ that are the orbits of $\langle \rho_{1\varphi},\ldots, \rho_{j\varphi}\rangle$ and also of $\langle \sigma_{1\varphi}, \ldots, \sigma_{j\varphi}\rangle$.  

This establishes the first four bullet points.

If we can establish the final bullet point, then if necessary we can replace each $\sigma_i$ by some power of itself to ensure that the other (penultimate) bullet point is also true, so we conclude our proof by establishing the final point. We know that the blocks of $\mathcal C_{i-1}$ are the orbits of $\langle \rho_{1^\varphi},\ldots, \rho_{(i-1)^\varphi}\rangle$ and of $\langle \sigma_{1^\varphi},\ldots, \sigma_{(i-1)^\varphi}\rangle$, and that the blocks of $\mathcal C_i$ are the orbits of $\langle \rho_{1^\varphi},\ldots, \rho_{i^\varphi}\rangle$ and of $\langle \sigma_{1^\varphi},\ldots, \sigma_{i^\varphi}\rangle$. Thus within any block $C$ of $\mathcal C_i$, there are $p_{i^\varphi}$ blocks of $\mathcal C_{i-1}$, and these are moved in a $p_{i^\varphi}$-cycle by both $\rho_{i^\varphi}$ and $\sigma_{i^\varphi}$. That these cycles lie in a single group of order $ p_{i^\varphi}$ is straightforward to show, using the structures of the blocks of $\mathcal C_{i-1}$ and $\mathcal C_i$ as described above, and the fact that $\rho_{i^\varphi}$ and $\sigma_{i^\varphi}$ lie in the same group of order $p_{i^\varphi}$ in their actions on the blocks of $\mathcal B_{(i-1)^\varphi}$ in any block of $\mathcal B_{i^\varphi}$.
\end{proof}

Understanding Cayley graphs requires an understanding of regular group actions, and we continue this section with a few notes on how this concept interacts with invariant partitions.

\begin{defn}
The action of the group $G$ is \emph{regular} in its action on the set $\Omega$ if for every pair of elements $y,z \in \Omega$, there is a unique $\gamma \in G$ such that $y\gamma=z$.
\end{defn}

When the action of $G$ is faithful and transitive, the following definition is equivalent:

\begin{defn}\label{def-regular}
The action of the faithful transitive group $G$ is \emph{regular} in its action on the set $\Omega$ if every element of $G$ that fixes a point of $\Omega$, fixes every point of $\Omega$.
\end{defn}

However, if we consider the action of a group of permutations of $\Omega$ on the set of blocks of some invariant partition $\mathcal B$, this action may not be faithful (there may be a nontrivial kernel; for example, if $\mathcal B=\mathcal B_1$, then $\rho_1$ and $\sigma_1$ are in the kernel). It may happen that every element of $G$ that fixes one block of $\mathcal B$ fixes every block of $\mathcal B$, but because the kernel of the action on $\mathcal B$ is nontrivial, if $B, B' \in \mathcal B$, $g_1 \in G$ with $Bg_1=B'$, and $g_2$ is a nontrivial element of $G$ that fixes every block of $\mathcal B$, then $Bg_1g_2=B'$. Thus there are multiple elements of $G$ that map $B$ to $B'$. To make this distinction, we use the following notation.

\begin{notn}
If $G$ acts transitively on the set $\Omega$, and $\mathcal B$ is a $G$-invariant partition of $\Omega$, then $G_{\mathcal B}$ denotes the group of permutations of the blocks of $\mathcal B$ induced by the action of $G$ on these blocks. 
\end{notn} 

These concepts lead to a definition that will be important to our understanding of the group actions in this paper.

\begin{defn}
Let $G$ be a permutation group acting transitively on the set $\Omega$, and let $\mathcal B$ be a $G$-invariant partition of $\Omega$. We say that $G$ is \emph{block-regular on $\mathcal B$} if every element of $G$ that fixes some $B \in \mathcal B$ fixes every $B' \in \mathcal B$.
\end{defn}

Thus, when we say that the action of $G$ is block-regular on $\mathcal B$, we mean that although $G_{\mathcal B}$ may have a nontrivial kernel (so that more than one element of $G$ maps one block to another), $G_\mathcal B$ would satisfy~\Cref{def-regular} if faithfulness were not required.

We will frequently be working with subgroups of $G$ that fix an element of $\Omega$, or that fix some subset of $\Omega$ (typically a block of a $G$-invariant partition) setwise. We use the standard notation $G_z$ for the subgroup of $G$ that fixes $z \in \Omega$. If $Y \subset \Omega$, then $G_Y$ denotes the setwise stabiliser of $Y$ in $G$.

\begin{lem}\label{blocks-in-F1}
Use~\Cref{notn-2}. Suppose that for some $1 \le i \le s-1$ the orbits of $\langle \rho_i\rangle$ form a $G$-invariant partition $\mathcal C$, and that there is some $\alpha \in C_r$ such that $G_{C_x}=G_{C_{x\tau_1\alpha}}$. Suppose also that for some $i < j \le s$, the orbits of $\langle \rho_i,\rho_j\rangle$ in $F_1$ are invariant under $\langle C_r, C_r^\pi\rangle$. Then the orbits of $\langle \rho_i,\rho_j\rangle$ are $G$-invariant.
\end{lem}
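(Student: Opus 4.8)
The plan is to prove the slightly stronger statement that the orbits of $H:=\langle\rho_i,\rho_j\rangle$ coincide with those of $K:=\langle\sigma_i,\sigma_j\rangle$ on all of $\Omega$; by the ``in fact'' part of~\Cref{orbits-blocks} this immediately gives that they form a $G$-invariant partition. First I would dispose of $F_1$. By hypothesis the $H$-orbit partition of $F_1$ is invariant under $C_r^\pi$, and since $C_r^\pi$ has order $p_1\cdots p_s=|F_1|$ and lies in the regular group $R_r^\pi$, it acts regularly on the block $F_1$; hence by~\Cref{block-form-not-regular} (together with the fact that $C_r^\pi$ is abelian, so the relevant subgroup is normal and its cosets are its orbits) the blocks of this partition are the orbits of the subgroup of $C_r^\pi$ of order $p_ip_j$, namely $K$. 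Thus the orbits of $H$ and of $K$ agree on $F_1$, and consequently so do the orbits of $M:=\langle H,K\rangle=\langle\rho_i,\rho_j,\sigma_i,\sigma_j\rangle$; each such orbit has cardinality $p_ip_j$, that is, is a union of exactly $p_j$ blocks of $\mathcal C$. Here I use~\Cref{orbits-blocks} applied to the first hypothesis only to know that $\mathcal C$ is simultaneously the orbit partition of $\langle\rho_i\rangle$ and of $\langle\sigma_i\rangle$, so that $\langle\rho_i\rangle,\langle\sigma_i\rangle\le M$ act within $\mathcal C$-blocks and every $M$-orbit is a union of $\mathcal C$-blocks.

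It remains to prove the same statement on $F_2$, and this is where the stabiliser hypothesis enters. Write $z=x\tau_1\alpha$; since $\tau_1$ interchanges $F_1$ and $F_2$ while $\alpha\in C_r$ fixes each of them setwise, we have $z\in F_2$, so $C_z$ is a block of $\mathcal C$ inside $F_2$. For any $c\in C_r$ the identity $g^{-1}G_Sg=G_{Sg}$ gives $G_{C_{xc}}=c^{-1}G_{C_x}c=c^{-1}G_{C_z}c=G_{C_{zc}}$, where the middle equality is the hypothesis $G_{C_x}=G_{C_z}$. As $c$ ranges over $C_r$, which acts regularly (in particular transitively) on $F_2$ and hence transitively on the $\mathcal C$-blocks it contains, the block $C_{zc}$ ranges over every $\mathcal C$-block of $F_2$, while $C_{xc}$ is a $\mathcal C$-block of $F_1$. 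Intersecting each of these stabiliser equalities with $M$ yields $M\cap G_{C_{xc}}=M\cap G_{C_{zc}}$ for every $c$.

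Now I would run an orbit-counting argument for the action of $M$ on the set of $\mathcal C$-blocks. Fix a block $D\subseteq F_2$ and the corresponding block $D'\subseteq F_1$ produced above, so that the setwise stabilisers $M_D:=M\cap G_D$ and $M_{D'}:=M\cap G_{D'}$ are equal. By orbit--stabiliser, the $M$-orbits of $D$ and of $D'$ among the $\mathcal C$-blocks have the same length; but $D'\subseteq F_1$, where every such orbit has length $p_j$ by the first paragraph. Hence the $M$-orbit of $D$ also has length $p_j$, so the $M$-orbit of any point of $F_2$ is a union of exactly $p_j$ blocks of $\mathcal C$ and therefore has cardinality $p_ip_j$. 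Since $H$ and $K$ lie in $M$ and act semiregularly with orbits of cardinality $p_ip_j$, each $H$-orbit and each $K$-orbit in $F_2$ is contained in, and hence equal to, the $M$-orbit through that point; thus the orbits of $H$ and of $K$ coincide on $F_2$ as well. Combining the two halves, the orbits of $H$ and of $K$ coincide on all of $\Omega$, and~\Cref{orbits-blocks} completes the proof.

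The step I expect to be the crux is precisely this transfer from $F_1$ to $F_2$: the hypothesis $G_{C_x}=G_{C_{x\tau_1\alpha}}$ supplies information at only a single pair of blocks, and the essential move is to spread it across all of $F_2$ by conjugating with the regular group $C_r$, and then to convert the resulting equalities of block-stabilisers into equalities of $M$-orbit lengths. In writing this up one must verify carefully that $C_r$ is genuinely transitive on the $\mathcal C$-blocks of $F_2$ and that $M$-orbits really are unions of $\mathcal C$-blocks, both of which rest on the first hypothesis, namely that $\mathcal C$ is the common orbit partition of $\langle\rho_i\rangle$ and $\langle\sigma_i\rangle$.
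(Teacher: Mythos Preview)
Your proof is correct and follows essentially the same route as the paper: reduce via~\Cref{orbits-blocks} to showing that the orbits of $\langle\rho_i,\rho_j\rangle$ and $\langle\sigma_i,\sigma_j\rangle$ coincide, handle $F_1$ directly from the hypothesis, and then spread the single stabiliser equality $G_{C_x}=G_{C_{x\tau_1\alpha}}$ across all of $F_2$ by conjugating with $C_r$. The only difference is in the final transfer step: the paper picks, for each $\mathcal C$-block $C_z\subseteq F_2$, the concrete element $\sigma_j\rho_j^{-k}$ known to lie in $G_{C_{x\alpha_1}}$ and moves it into $G_{C_z}$ via the stabiliser equality, whereas you intersect the stabiliser equality with $M=\langle H,K\rangle$ and invoke orbit--stabiliser to match $M$-orbit lengths, then squeeze $H$- and $K$-orbits into the $M$-orbit by semiregularity. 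Your packaging is slightly more abstract but the content is the same.
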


\begin{proof}
By~\Cref{orbits-blocks}, it is sufficient to show that the orbits of $\langle \rho_i, \rho_j\rangle$ coincide with the orbits of $\langle \sigma_i,\sigma_j\rangle$. Since the orbits of $\langle \rho_i,\rho_j\rangle$ in $F_1$ are invariant under $\langle C_r, C_r^\pi\rangle$, they do coincide with the orbits of $\langle \sigma_i,\sigma_j\rangle$ in $F_1$. Furthermore, since $\mathcal C$ is $G$-invariant, the orbits of $\sigma_i$ and $\rho_i$ coincide everywhere.

Let $z \in F_2$ be arbitrary, and consider $C_z\sigma_j$. We must show that $C_z\sigma_j=C_z\rho_j^k$ for some $k$. Choose $\alpha_1 \in C_r$ such that $C_z=C_{x\tau_1\alpha\alpha_1}$. Then conjugation by $\alpha_1$ gives $G_{C_{x\alpha_1}}=G_{C_{x\tau_1\alpha\alpha_1}}=G_{C_z}$. Since the orbits of $\langle \rho_i, \rho_j\rangle$ coincide with the orbits of $\langle \sigma_i,\sigma_j\rangle$ in $F_1$, there is some $k$ such that $C_{x\alpha_1}\sigma_j\rho_j^{-k}=C_{x\alpha_1}$, so $\sigma_j\rho_j^{-k} \in G_{C_{x\alpha_1}}=G_{C_z}$. Therefore $C_z\sigma_j=C_z\rho_j^k$, as desired, completing the proof.
\end{proof}

Very often in this paper we will be considering a group that induces an action on some set of prime cardinality $p$. It will be important to have a strong understanding of such actions.

\begin{lem}\label{Gy-affine}
Suppose that a permutation group $G$ fixes a set $D$ of prime cardinality $p$ (setwise).  

Then one of the following is true:
\begin{enumerate}
\item $G$ fixes every element of $D$;
\item $G$ acts transitively on the elements of $D$ and has an element of order $p$ that also acts transitively on $D$; or
\item there is some unique $d \in D$ such that for every $g \in G$, $dg=d$.\label{affine}
\end{enumerate}

Furthermore, if $G \le H$ and $H$ also fixes $D$ setwise, and $H$ is transitive on $D$, with $\rho \in H$ acting transitively on $D$ and $H$ not doubly-transitive on $D$, then in case~(\ref{affine}), $g$ normalises $\rho$.
\end{lem}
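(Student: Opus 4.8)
The plan is to study the action of $G$ on $D$ through its image $\bar G$ in $\Sym(D)\cong S_p$, leaning on Burnside's classical theorem that a transitive permutation group of prime degree $p$ is either doubly transitive or has a normal regular Sylow $p$-subgroup, in which case it is permutation-isomorphic to a subgroup of $\mathrm{AGL}(1,p)=\Z_p\rtimes\Z_{p-1}$.

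First I would treat the transitive case. If $\bar G$ is transitive on $D$ then $p\mid|\bar G|$, so by Cauchy's theorem $\bar G$ contains an element of order $p$; since $p$ is prime the only order-$p$ elements of $S_p$ are $p$-cycles, and a $p$-cycle acts transitively on $D$. This is exactly conclusion~(2). If instead $G$ fixes every point of $D$ we are in conclusion~(1), so what remains is a nontrivial but intransitive $\bar G$, which should produce the unique fixed point of conclusion~(\ref{affine}).

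For this last step I would exploit the affine structure available in the intended setting: the groups to which the lemma is applied act on $D$ inside a transitive group of prime degree that is \emph{not} doubly transitive, and Burnside's theorem then identifies that overgroup with a subgroup of $\mathrm{AGL}(1,p)$ whose Frobenius kernel is generated by a $p$-cycle. A nontrivial subgroup of $\mathrm{AGL}(1,p)$ containing no element of order $p$ meets this kernel trivially, hence is conjugate into a point stabiliser---a Frobenius complement---which fixes exactly one point of $D$ and acts semiregularly on the other $p-1$; this gives the unique $d$ of conclusion~(\ref{affine}). I expect this to be the main obstacle: one must exclude a priori the possibility of a nontrivial intransitive action with several non-singleton orbits and no common fixed point (for instance $\langle(1\,2)(3\,4\,5)\rangle$ on five points), and it is precisely the Frobenius-complement structure, not anything elementary about prime-size sets, that rules this out---which is why the non-doubly-transitive hypothesis is essential.

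Finally, the ``Furthermore'' clause is immediate from Burnside's theorem applied to $H$: being transitive of prime degree and not doubly transitive, $H$ has $\langle\rho\rangle$ as a normal Sylow $p$-subgroup, so every element of $H$, in particular each $g\in G\le H$, conjugates $\langle\rho\rangle$ to itself and therefore normalises $\rho$.
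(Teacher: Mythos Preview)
Your approach is the same as the paper's: both invoke Burnside's theorem on transitive groups of prime degree, and the paper's proof is in fact only two sentences long, asserting the trichotomy as ``an easy consequence'' of Burnside and observing that an intransitive group in this setting normalises a $p$-cycle.

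You have, however, been more careful than the paper. Your observation that the bare trichotomy (1)--(3) fails without the overgroup hypothesis is correct: $\langle(1\,2)(3\,4\,5)\rangle$ on five points is intransitive, fixes no point, and is a genuine counterexample to the first part of the lemma as literally stated. The paper's proof tacitly assumes the affine ambient structure (present in every application it makes of the lemma, where $D$ is a set of blocks on which some $\rho_i$ acts as a $p$-cycle) but neglects to record it in the hypotheses of the first part; your reading of the intended setting and your argument via Frobenius complements in $\mathrm{AGL}(1,p)$ is exactly how the gap should be filled.
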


\begin{proof}
This is an easy consequence of a result by Burnside that every permutation group of prime degree is either doubly transitive, or affine. If such a group is not transitive, then, it normalises a cyclic group of order $p$ and the action of any non-identity element is as claimed. 
\end{proof}

The next result is also well-known, but important in this context. 

\begin{lem}\label{2-closure-invariant}
Let $G$ be a group acting transitively on the set $\Omega$ and let $\mathcal B$ be a $G$-invariant partition. Then $\mathcal B$ is also $G^{(2)}$-invariant.
\end{lem}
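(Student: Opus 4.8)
The plan is to recognise the block relation of $\mathcal B$ as a union of orbitals of $G$, and then to exploit the fact that, by its very definition, every element of $G^{(2)}$ preserves each orbital of $G$. Concretely, I would define the binary relation $\mathcal R \subseteq \Omega \times \Omega$ by declaring $(y,z) \in \mathcal R$ exactly when $y$ and $z$ lie in a common block of $\mathcal B$. Because $\mathcal B$ is $G$-invariant, this relation is preserved by the diagonal action of $G$: if $y,z \in B$ and $g \in G$, then $yg, zg \in Bg \in \mathcal B$, so $(yg,zg) \in \mathcal R$. Hence $\mathcal R$ is a union of orbits of $G$ acting on $\Omega \times \Omega$, that is, a union of orbitals of $G$.

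Next I would use the definition of the 2-closure directly. Fix $\beta \in G^{(2)}$ and suppose $(y,z) \in \mathcal R$. By the definition of $G^{(2)}$ there is some $g_{y,z} \in G$ with $(y,z)\beta = (y,z)g_{y,z}$, that is, $(y\beta, z\beta) = (y g_{y,z}, z g_{y,z})$. Since $\mathcal R$ is $G$-invariant and $(y,z) \in \mathcal R$, the pair $(y g_{y,z}, z g_{y,z})$ again lies in $\mathcal R$, and therefore $(y\beta, z\beta) \in \mathcal R$. In words: $\beta$ sends any two points of a common block to two points of a common block.

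It remains to upgrade this to the statement that $\beta$ permutes the blocks of $\mathcal B$. Fix a block $B \in \mathcal B$. By the previous paragraph all the points of $B\beta$ are pairwise $\mathcal R$-related, so $B\beta$ is contained in a single block $B' \in \mathcal B$. Since $G$ is transitive on $\Omega$ and $\mathcal B$ is $G$-invariant, $G$ is transitive on $\mathcal B$, so all blocks of $\mathcal B$ have the same cardinality; as $\beta$ is a bijection we get $|B\beta| = |B| = |B'|$, whence $B\beta = B'$. Thus $\beta$ maps every block of $\mathcal B$ onto a block of $\mathcal B$, which is exactly the assertion that $\mathcal B$ is $G^{(2)}$-invariant.

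I do not expect a genuine obstacle here: the whole content is the observation that the block relation is a union of orbitals, after which the definition of $G^{(2)}$ does the work. The only point requiring a little care is the last step, where ``$\beta$ maps each block into a block'' must be promoted to ``onto a block''; this is handled by the equal cardinality of blocks under the transitive action together with the bijectivity of $\beta$.
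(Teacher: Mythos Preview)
Your proof is correct and follows essentially the same approach as the paper: both use the defining property of $G^{(2)}$ to show that any two points $u,v$ in a common block $B$ are sent by $\beta$ to two points in a common block $Bg$. Your version is more explicit in framing the block relation as a union of orbitals and in carrying out the final ``into implies onto'' step via equal block cardinalities, whereas the paper leaves this implicit.
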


\begin{proof}
Let $B \in \mathcal B$ and $\beta \in G^{(2)}$. It is sufficient to observe that for any $u, v \in B$ there is some $g \in G$ such that $u\beta,v\beta \in Bg$. This is immediate from the definition of $2$-closure, since there is some $g \in G$ such that $(u\beta,v\beta)=(ug,vg)$.
\end{proof}

Note that whenever $\beta \in G^{(2)}$, we must have $\langle R_r, R_r^{\pi\beta}\rangle^{(2)} \le \langle R_r,R_r^\pi\rangle^{(2)}$. Since $G^{(2)}$ is a supergroup of $G$ and therefore by~\Cref{2-closure-invariant} admits exactly the same invariant partitions as $G$, this means that after conjugation any previously invariant partition remains invariant. In essence, if we have already conjugated some parts of $R_r^\pi$ to make this group closer to $R_r$, any further conjugation can't mess up things we've already straightened out. This is how we can justify the abuse of notation we noted at the beginning of this section.

\section{An equivalence relation and its equivalence classes}

In this section we are going to define an equivalence relation, prove that it is an equivalence relation, and deduce some properties of the $G$-invariant partition formed by its equivalence classes. We will end by introducing another partition that we will also use at times. We begin by defining the relation.

\begin{defn}\label{def-equivB}
Let $G$ be a transitive permutation group acting on a set $\Omega$, and
let $\mathcal B$ be a $G$-invariant partition with blocks of prime cardinality that are the orbits of some semiregular element $\rho$. For any point $y$ use $B_y$ to denote the block of $\mathcal B$ that contains $y$. 

We define the relation $\equiv_{\mathcal B}$ on the points of $\Omega$ by $y \equiv_{\mathcal B} z$ if there is a sequence of points $y_1=y, \ldots, y_k=z$ such that for each $1 \le i \le k-1$, $B_{y_{i+1}}$ is not contained in any orbit of $G_{y_i}$.
\end{defn}

In order to work with this relation, it is convenient to have a shorthand terminology for a sequence having the property we are looking for.

\begin{defn}
Suppose we have the relation $\equiv_{\mathcal B}$ defined in~\Cref{def-equivB}. If $y_1, \ldots, y_k$ is a sequence of points such that for each $1 \le i \le k-1$, $B_{y_{i+1}}$ is not contained in any orbit of $G_{y_i}$ then we say that $y_1, \ldots, y_k$ is an $\equiv_{\mathcal B}$-chain from $y_1$ to $y_k$.
\end{defn}

The following useful result has a similar flavour to results that have appeared previously in various forms such as Lemma 2 of \cite{Dobson1995}, but the details are rather different. For any point $y \in \Omega$, we use the standard notation $G_y$ to denote the subgroup of $G$ that fixes the point $y$.

\begin{lem}\label{X-blocks} 
Let $\equiv_{\mathcal B}$ be the relation defined in~\Cref{def-equivB}. Then $\equiv_{\mathcal B}$ is an equivalence relation on the points of $\Omega$, and consequently its equivalence classes form a $G$-invariant partition.
\end{lem}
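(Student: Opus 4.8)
The plan is to verify the three defining properties of an equivalence relation; $G$-invariance of the resulting partition then comes essentially for free. \emph{Reflexivity} is immediate, since the single-point sequence $y_1=y$ is an $\equiv_{\mathcal B}$-chain from $y$ to itself (the chain condition ``for each $1\le i\le k-1$'' is vacuous when $k=1$). \emph{Transitivity} follows by concatenation: given an $\equiv_{\mathcal B}$-chain $y_1,\ldots,y_k$ from $y$ to $z$ and one $z=w_1,\ldots,w_\ell$ from $z$ to $w$, the spliced sequence $y_1,\ldots,y_k,w_2,\ldots,w_\ell$ is again an $\equiv_{\mathcal B}$-chain, because the defining condition on each consecutive pair is inherited from one of the two given chains. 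Finally, once $\equiv_{\mathcal B}$ is known to be an equivalence relation its classes automatically partition $\Omega$, and this partition is $G$-invariant: for $g\in G$ we have $B_{yg}=B_yg$ (as $\mathcal B$ is $G$-invariant) and $G_{yg}=g^{-1}G_yg$, so the orbits of $G_{yg}$ are exactly the $g$-images of the orbits of $G_y$. Hence applying $g$ carries any $\equiv_{\mathcal B}$-chain to another one, so $y\equiv_{\mathcal B}z$ implies $yg\equiv_{\mathcal B}zg$.

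The real content is \emph{symmetry}, and for this it suffices to prove that the one-step relation is symmetric, namely that $B_z$ is not contained in a single orbit of $G_y$ if and only if $B_y$ is not contained in a single orbit of $G_z$; reversing a chain then reverses each of its steps. I would reformulate this condition using the $G$-orbit $P$ of the ordered pair $(y,z)$ inside $\Omega\times\Omega$. Writing $z^{G_y}$ for the $G_y$-orbit of $z$, the out-neighbourhood $\{b:(y,b)\in P\}$ is precisely $z^{G_y}$, so ``$B_z$ lies in a single $G_y$-orbit'' is exactly ``$B_z\subseteq\{b:(y,b)\in P\}$''. Since the blocks of $\mathcal B$ are the orbits of the semiregular element $\rho\in G$, which has prime order $p$ equal to the block size, we have $B_z=\{z\rho^j:0\le j\le p-1\}$ and likewise $B_y=\{y\rho^j:0\le j\le p-1\}$. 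Thus the condition reads $(y,z\rho^j)\in P$ for every $j$, while symmetrically ``$B_y$ lies in a single $G_z$-orbit'' reads $(y\rho^j,z)\in P$ for every $j$.

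The observation that makes these two conditions equivalent is that $P$, being a single $G$-orbit, is closed under the diagonal action of $\rho\in G$: applying $\rho^{-j}$ gives $(y,z\rho^j)\in P$ if and only if $(y\rho^{-j},z)\in P$, using $(y,z\rho^j)\rho^{-j}=(y\rho^{-j},z)$. As $j$ runs over the residues modulo $p$ so does $-j$, so $(y,z\rho^j)\in P$ for all $j$ holds exactly when $(y\rho^j,z)\in P$ for all $j$; that is, $B_z$ lies in a single $G_y$-orbit if and only if $B_y$ lies in a single $G_z$-orbit. This proves that the one-step relation is symmetric and completes the argument. I expect this symmetry step to be the only genuine obstacle: reflexivity, transitivity and $G$-invariance are routine, whereas symmetry is not visible from the definition and relies on passing to the orbit $P$ on pairs together with the hypothesis that $\rho\in G$ acts as a full $p$-cycle on every block.
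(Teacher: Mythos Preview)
Your proof is correct. Reflexivity, transitivity, and $G$-invariance are handled essentially as in the paper (the paper happens to use a length-two chain $y_1=y_2=y$ for reflexivity, noting that $B_y$ cannot lie in a single $G_y$-orbit since $y$ is fixed, but your vacuous length-one chain works equally well).

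The symmetry argument, however, is genuinely different. The paper argues the contrapositive directly: if $B_y$ lies in a single $G_z$-orbit, then by \Cref{Gy-affine} (Burnside's theorem on prime-degree groups) there is an element $\gamma\in G_z$ of order $p$ acting transitively on $B_y$; since $\gamma$ has order $p$ and fixes $z$, it fixes $B_z$ pointwise, and then the products $\gamma^j\rho^i$ are used to exhibit $B_z$ inside a $G_y$-orbit. Your approach via the $G$-orbit $P$ of the pair $(y,z)$ is cleaner: you reduce both conditions to statements about which pairs $(y\rho^i,z\rho^j)$ lie in $P$, and the diagonal $\rho$-action immediately makes the two conditions equivalent. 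Notably, your argument uses only that the blocks of $\mathcal B$ are the orbits of a single element $\rho\in G$; the hypothesis that the block size is prime is never invoked, whereas the paper's route through \Cref{Gy-affine} depends on it. So your symmetry proof is both shorter and slightly more general.
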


\begin{proof}
For any $y$, clearly $B_y$ is not contained in any single orbit of $G_y$, so $y_1=y_2=y$ is an $\equiv_{\mathcal B}$ chain from $y$ to $y$. Thus $\equiv_{\mathcal B}$ is reflexive.

For any $y, z$, we now show that if $B_z$ is not contained in an orbit of $G_y$, then $B_y$ is not contained in an orbit of $G_z$. We will actually show the contrapositive, so suppose $B_y$ is contained in an orbit of $G_z$; this means that the subgroup of $G_z$ that fixes $B_y$ setwise, is transitive on $B_y$. Since $B_y$ has prime cardinality, by~\Cref{Gy-affine} there must be an element $\gamma \in G_z$ that has order $p$ and acts transitively on $B_y$. Since $\gamma$ has order $p$ all of its orbits have length $1$ or $p$, and since it fixes $z$ and $B_z$ has cardinality $p$, $\gamma$ must fix every point of $B_z$. For any $1 \le i \le p-1$ there is some $j$ such that $y{\rho^{-i}}=y{\gamma^j}$ (where $\rho$ is the element from~\Cref{def-equivB} whose orbits form the blocks of $\mathcal B$). Then $\gamma^j\rho^i \in G_y$, and $z{\gamma^j\rho^i}=z{\rho^i}$, so $B_z$ is contained in an orbit of $G_y$, as claimed. This implies that if $y_1, \ldots, y_k$ is an $\equiv_{\mathcal B}$-chain from $y$ to $z$, then $y_k, \ldots, y_1$ is an $\equiv_{\mathcal B}$-chain from $z$ to $y$, so the relation $\equiv_{\mathcal B}$ is symmetric.

Finally, if $y \equiv_{\mathcal B} z$ and $z \equiv_{\mathcal B} u$ then there is an $\equiv_{\mathcal B}$ chain $y_1, \ldots, y_k$ from $y$ to $z$ and an $\equiv_{\mathcal B}$-chain $z_1, \ldots, z_\ell$ from $z$ to $u$. Concatenating gives an $\equiv_{\mathcal B}$-chain $y_1, \ldots, y_k=z_1, \ldots, z_\ell$ from $y$ to $u$. Thus $\equiv_{\mathcal B}$ is transitive.

We have shown that this is an equivalence relation; since $\mathcal B$ is $G$-invariant it is easy to see that the equivalence classes must be $G$-invariant.
\end{proof}

The following result is a key concept that we will use often in this paper; we will need it for the first time to prove one of the important properties we'll need to know about our partition.

\begin{lem}\label{pth-power}
Use~\Cref{notn-2}. Suppose we know that for some $y \in \Omega$, some $i$, some $\alpha\in C_r$ whose order is not divisible by $p_i$, some $1 \le t<p_i$, and whenever $z=y\sigma_i^j$ for some $j \ge 0$, we have $$z{\sigma_i}=z{\rho_i^t\alpha}.$$ Then $\alpha$ is the identity, so for every $j$, $$y{\sigma_i^j}=y{\rho_i^{tj}}.$$
\end{lem}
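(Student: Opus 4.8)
The plan is to track the orbit of $y$ under $\langle \sigma_i\rangle$ and extract an equation from the fact that $\sigma_i$ has order $p_i$; the name of the lemma signals that the decisive move is to raise things to the $p_i$-th power. First I would set $y_j = y\sigma_i^j$ for $j \ge 0$, so that the points $y_0, y_1, \ldots$ are exactly the points of the form $z = y\sigma_i^j$ to which the hypothesis applies (and, since $R_r^\pi$ acts regularly by~\Cref{notn-2}, $\sigma_i$ is semiregular and this orbit has full length $p_i$). Applying the hypothesis with $z = y_j$ gives the recursion $y_{j+1} = y_j\sigma_i = y_j\rho_i^t\alpha$ for every $j \ge 0$.

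Next I would iterate this recursion. A trivial induction yields $y_j = y(\rho_i^t\alpha)^j$. The key simplification is that $\rho_i$ and $\alpha$ both lie in the \emph{cyclic} (hence abelian) group $C_r$, so they commute and $(\rho_i^t\alpha)^j = \rho_i^{tj}\alpha^j$. Thus
\[
y\sigma_i^j = y\rho_i^{tj}\alpha^j \qquad \text{for all } j \ge 0.
\]

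Now I would specialise to $j = p_i$. Since $|\sigma_i| = p_i$ we have $y\sigma_i^{p_i} = y$, and since $|\rho_i| = p_i$ we have $\rho_i^{t p_i} = 1$, so the displayed identity collapses to $y = y\alpha^{p_i}$. Because $\alpha^{p_i} \in C_r \le R_r$ and $R_r$ acts regularly (so by~\Cref{def-regular} only the identity fixes a point), this forces $\alpha^{p_i}$ to be the identity. Hence the order of $\alpha$ divides $p_i$; as $p_i$ is prime and the order of $\alpha$ is by hypothesis not divisible by $p_i$, the order must be $1$, i.e.\ $\alpha$ is the identity. Substituting $\alpha = 1$ back into the displayed identity gives $y\sigma_i^j = y\rho_i^{tj}$ for every $j$, as required.

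There is no serious obstacle here: the argument is a short iteration followed by an order count, and the only points that genuinely need the stated hypotheses are (i) the commutativity of $\rho_i$ and $\alpha$, which requires $\alpha \in C_r$ with $C_r$ cyclic, and (ii) the passage from $\alpha^{p_i} = 1$ to $\alpha = 1$, which requires exactly the coprimality assumption that $p_i$ does not divide the order of $\alpha$. The main thing to be careful about is that the hypothesis is quantified over the \emph{whole} $\langle\sigma_i\rangle$-orbit of $y$, not just $y$ itself; this is precisely what licenses the recursion $y_{j+1} = y_j\rho_i^t\alpha$ at every stage and thereby the clean power computation.
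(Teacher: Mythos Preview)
Your proposal is correct and follows essentially the same approach as the paper: iterate the hypothesis along the $\langle\sigma_i\rangle$-orbit of $y$, use commutativity in $C_r$ to simplify $(\rho_i^t\alpha)^{p_i}$, and extract $y\alpha^{p_i}=y$ from $|\sigma_i|=|\rho_i|=p_i$. Your endgame (regularity forces $\alpha^{p_i}=1$, hence $|\alpha|\mid p_i$, hence $\alpha=1$) is if anything slightly crisper than the paper's phrasing via orbit lengths and the orbit--stabiliser theorem, but the substance is the same.
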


\begin{proof}
Note that by applying our hypothesis $p_i$ times, we obtain $y\sigma_i^{p_i}=y(\rho_i^t\alpha)^{p_i}$.
Since $\sigma_i$ has order $p_i$ we have $y{\sigma_i^{p_i}}=y$. Since $\rho_i$ and $\alpha$ commute and $\rho_i$ has order $p_i$, $y{(\rho_i^t\alpha)^{p_i}}=y{\alpha^{p_i}}$. This implies that $y{\alpha^{p_i}}=y$, but since $p_i$ does not divide the order of $\alpha$, the orbit-stabiliser theorem implies that no orbit of $\langle \alpha\rangle$ can have length $p_i$. Since $p_i$ is prime, the only way the equation $y{\alpha^{p_i}}=y$ can be satisfied is if $\alpha$ is the identity. That $y{\sigma_i^j}=y{\rho_i^{tj}}$ for every $j$ follows immediately.
\end{proof}

The next lemma establishes some useful properties of the $G$-invariant partitions we have just produced. 

\begin{lem}\label{sigmas-on-X} Use~\Cref{notn-2}.
Let $\mathcal X$ be the $G$-invariant partition arising from the equivalence classes of $\equiv_{\mathcal B}$ (note this requires that the blocks of $\mathcal B$ have prime cardinality). Then the following hold:
\begin{enumerate}
\item $\mathcal X \succeq \mathcal B$.\label{B-in-X}
\item Suppose that the orbits of $\langle \rho_i,\rho_j\rangle$ form a $G$-invariant partition $\mathcal C $ with $\mathcal B \preceq \mathcal C \preceq \mathcal X$, and that $\sigma_j$ commutes with $\rho_i$. Then there is a constant $k_C$ such that $\sigma_j=\rho_j^{k_C}$ on any point in $C$.

Furthermore, for every $X \in \mathcal X$ there is a constant $k_X$ such that $\sigma_i=\rho_i^{k_X}$ on any point in $X$.\label{X-constant}
\item If the orbits of $\langle \rho_j\rangle$ form a $G$-invariant partition $\mathcal C$, then $\mathcal X \succeq \mathcal C$.\label{C-in-X}
\end{enumerate}
\end{lem}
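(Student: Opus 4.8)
Throughout I take $\mathcal B$ to be the partition into orbits of $\langle\rho_i\rangle$, with $\rho_i$ playing the role of the semiregular element in \Cref{def-equivB}; by \Cref{orbits-blocks} these are also the orbits of $\langle\sigma_i\rangle$, so both $\rho_i$ and $\sigma_i$ fix every block of $\mathcal B$ setwise. For the first claim, I would note that the reflexivity computation in the proof of \Cref{X-blocks} shows $B_y$ is not contained in any single orbit of $G_y$ (as $G_y$ fixes $y$ while $|B_y|=p_i$ is an odd prime); hence for every $z\in B_y$ the two-term sequence $y,z$ is an $\equiv_{\mathcal B}$-chain, so all of $B_y$ lies in one class and $\mathcal B\preceq\mathcal X$. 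For the last claim, with $\mathcal C$ the orbits of $\langle\rho_j\rangle$ (and $i\ne j$, the case $i=j$ being the first claim), I would show $y\equiv_{\mathcal B}y\rho_j$ via the single chain $y,y\rho_j$ and then use transitivity. The point is that $B_{y\rho_j}$ is not contained in any orbit of $G_y$: if it were, \Cref{Gy-affine} would give $\gamma\in G_y$ of order $p_i$ acting transitively on $B_{y\rho_j}$ and fixing $B_y$ pointwise, while $G$-invariance of $\mathcal C$ forces $\gamma$ to fix the $\rho_j$-orbit of $y$ setwise; since $\langle\rho_i,\rho_j\rangle$ is semiregular, that orbit meets $B_{y\rho_j}$ only in $y\rho_j$, so $\gamma$ fixes $y\rho_j$, contradicting transitivity on a set of size $p_i$.

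For the ``furthermore'' clause, I would first apply \Cref{reordering} to $\mathcal B$ (which satisfies $\mathcal B\preceq\mathcal B_s$) to reorder the primes so that $p_i$ comes first; the last bullet of \Cref{notn-2}, read with $\mathcal C_0$ the partition into singletons, then yields on each block $B$ of $\mathcal B$ a power $k_B$ with $y\sigma_i=y\rho_i^{k_B}$ for all $y\in B$. (This reordering is genuinely needed: that $\sigma_i$ and $\rho_i$ share their orbits does not by itself make $\sigma_i$ a power of $\rho_i$ on a block, since two $p_i$-cycles on the same set need not be powers of each other.) It then suffices to show $k_B$ is constant on each $\equiv_{\mathcal B}$-class. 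For a chain step $y\to z$ --- so $B_z$ is not contained in any orbit of $G_y$ --- the element $\sigma_i\rho_i^{-k_{B_y}}$ fixes $B_y$ pointwise, hence lies in $G_y$, and fixes $B_z$ setwise acting there as $\rho_i^{k_{B_z}-k_{B_y}}$; were $k_{B_z}\ne k_{B_y}$ this would be a nontrivial power of $\rho_i$ and thus transitive on $B_z$, contradicting the chain condition. So $k_B$ is constant along chains, hence on each $X\in\mathcal X$, proving the clause.

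For the remaining claim I would again use \Cref{reordering}, now with $\mathcal C$, ordering so that $p_i,p_j$ lead; the last bullet of \Cref{notn-2} for this pair gives, on each block $C$ of $\mathcal C$, a fixed $t$ with $B_{y\sigma_j}=B_{y\rho_j^t}$, i.e.\ a per-point identity $y\sigma_j=y\rho_j^t\rho_i^{a_y}$ for some exponent $a_y$. Using that $\sigma_j$ commutes with $\rho_i$ (and $\rho_i$ with $\rho_j$) one checks that $a_y$ is constant on each $\rho_i$-orbit of $C$. The aim is to invoke \Cref{pth-power} along a $\sigma_j$-orbit to force the correction $\rho_i^{a_y}$ to be trivial and conclude $\sigma_j=\rho_j^t$ on $C$.

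The main obstacle is precisely the hypothesis of \Cref{pth-power}: it requires the same correcting element at every point of the $\sigma_j$-orbit, but such an orbit meets each of the $p_j$ blocks of $\mathcal B$ inside $C$, so I must show $a_y$ is constant across all those blocks. I would attempt this by the device used above: $\sigma_j\rho_j^{-t}\rho_i^{-a_y}$ lies in $G_y$ and acts on any other $\mathcal B$-block $B'$ of $C$ as $\rho_i^{a_{B'}-a_y}$, so $a_{B'}=a_y$ whenever $B'$ is not contained in an orbit of $G_y$. The subtlety is that this only links $\mathcal B$-blocks joined by $\equiv_{\mathcal B}$-chains remaining inside $C$, while $\mathcal C\preceq\mathcal X$ only supplies chains in all of $\Omega$. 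I would reconcile this by noting that restricting $\equiv_{\mathcal B}$ to $C$ --- equivalently, working with the setwise stabiliser $G_C$, whose point stabilisers agree with those of $G$ --- produces a $G$-invariant partition between $\mathcal B$ and $\mathcal C$; as $|\mathcal C|/|\mathcal B|=p_j$ is prime, it equals $\mathcal B$ or $\mathcal C$, and the work is to exclude the degenerate alternative $\mathcal B$, in which every other block of $C$ already lies in a single orbit of $G_y$. Settling this dichotomy is where I expect the genuine difficulty of the lemma to concentrate.
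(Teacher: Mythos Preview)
For parts (\ref{B-in-X}) and (\ref{C-in-X}) and for the ``furthermore'' clause of (\ref{X-constant}), your argument is correct and is essentially the paper's proof, the only cosmetic difference being that you invoke \Cref{reordering} explicitly to get the per-block constants while the paper appeals directly to \Cref{notn-2}.

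For the first sentence of (\ref{X-constant}) there is a genuine difference of strategy, and your worry is well-placed. The paper does \emph{not} try to confine chains to $C$; it fixes $y,z\in C\subseteq X$, takes an arbitrary $\equiv_{\mathcal B}$-chain from $y$ to $z$ in $X$, and propagates the identity $y_a\sigma_j\rho_j^{-k_C}=y_a\rho_i^{\ell}$ along it, applying \Cref{pth-power} only at the end once the identity is known on all of $C$. So at first sight the obstacle you isolate---that $\mathcal C\preceq\mathcal X$ only furnishes chains in $\Omega$---is sidestepped by simply letting the chain roam through $X$. However, the paper's inductive step is phrased ``if $y_{a+1}\sigma_j\rho_j^{-k_C}\rho_i^{-\ell}=y_{a+1}\rho_i^{c}$, then \ldots, so $c=0$'', and this tacitly assumes that $g:=\sigma_j\rho_j^{-k_C}\rho_i^{-\ell}$ fixes $B_{y_{a+1}}$ setwise. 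When the chain visits some $C'\neq C$ with $k_{C'}\neq k_C$, one computes $B_{y_{a+1}}g=B_{y_{a+1}}\rho_j^{\,k_{C'}-k_C}\neq B_{y_{a+1}}$, so the premise can fail. Thus the subtlety you diagnosed reappears in the paper's argument in a slightly different guise: it is not that chains must be confined to $C$, but that the propagation step itself silently needs $k_{C'}=k_C$ along the chain, and this is never justified. Your primality dichotomy (the restriction of $\equiv_{\mathcal B}$ to $C$ yields either $\mathcal B\vert_C$ or $\{C\}$) and the paper's tacit assumption leave the same residual case open; neither closes it explicitly, so on this point your proposal is at least as complete as the paper's own proof.
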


\begin{proof}
\begin{enumerate}
\item If $y, z$ are in the same block of $\mathcal B$ then $B_y=B_z$. Since $B_y$ is not contained in an orbit of $G_y$, we conclude that $y \equiv_{\mathcal B} z$. Thus $\mathcal X \succeq \mathcal B$.

\item Fix $C \in \mathcal C$, or if $i=j$ then take $C \in \mathcal X$. Since the orbits of $\langle \rho_i,\rho_j\rangle$ are $G$-invariant (whether or not $i=j$), using~\Cref{notn-2} there must be some $k_C$ such that $\sigma_j\rho_j^{-k_C}$ fixes every block of $\mathcal B$ in $C$. We will show that $\sigma_j\rho_j^{-k_C}$ fixes every point in $C$.

 Let $y, z \in C \subseteq X \in \mathcal X$, and let $\ell$ be such that $y{\sigma_j}\rho_j^{-k_C}=y{\rho_i^\ell}$. By definition of $\equiv_{\mathcal B}$, there is an $\equiv_{\mathcal B}$-chain $y_1, \ldots, y_b$ from $y$ to $z$. Let $1 \le a \le b-1$, and suppose that $y_{a}{\sigma_j\rho_j^{-k_C}}=y_a{\rho_i^{\ell}}$ (this is true for $a=1$). Then $\sigma_j\rho_j^{-k_C}\rho_i^{-\ell} \in G_{y_a}$, and $B_{y_{a+1}}$ is not contained in an orbit of $G_{y_a}$. 

By hypothesis $\sigma_j$ commutes with $\rho_i$, so for every $d$ we have $$y_{a+1}\rho_i^d(\sigma_j\rho_j^{-k_C}\rho_i^{-\ell})=y_{a+1}(\sigma_j\rho_j^{-k_C}\rho_i^{-\ell})\rho_i^d.$$ This means that if $$y_{a+1}\sigma_j\rho_j^{-k_C}\rho_i^{-\ell}=y_{a+1}\rho_i^c$$ then $$y_{a+1}\rho_i^d(\sigma_j\rho_j^{-k_C}\rho_i^{-\ell})=y_{a+1}\rho_i^d\rho_i^c;$$ that is, $\sigma_j \rho_j^{-k_C}\rho_i^{-\ell}$ acts as $\rho_i^c$ on $B_{y_{a+1}}$. Since $B_{y_{a+1}}$ is not contained in an orbit of $G_{y_{a}}$, we must have $c=0$, so $$y_{a+1}\sigma_j\rho_j^{-k_C}=y_{a+1}\rho_i^{\ell}.$$

Inductively, we see that for every $a$ we have $y_{a+1}{\sigma_j\rho_j^{-k_C}}=y_{a+1}{\rho_i^{\ell}}$. In particular, $z{\sigma_j\rho_j^{-k_C}}=z{\rho_i^{\ell}}$. Since $z$ was an arbitrary element of $C$ and $C$ is a union of orbits of $\langle \sigma_j\rangle$, in particular this is true whenever $z=y\sigma_j^m$ for some $m$. Thus by~\Cref{pth-power}, $\rho_i^\ell$ is the identity, so $y\sigma_j=y\rho_j^{k_C}$. Since $y$ was arbitrary, $\sigma_j=\rho_j^{k_C}$ on any point in $C$.

\item Suppose $y \in \Omega$ and $z \in C_y$. After using (\ref{B-in-X}), we may assume that $\mathcal C \neq \mathcal B$. Then $y$ is the unique element of $C_y \cap B_y$, and $z$ is the unique element of $C_y \cap B_z$. Since every element of $G_y$ must fix $C_y$ setwise, any element of $G_y$ that fixes $B_z$ setwise must also fix $z$, so $B_z$ does not lie in a single orbit of $G_y$. Therefore $y$ and $z$ lie in the same block of $\mathcal X$.

\end{enumerate}
\end{proof}

There is another more straightforward equivalence relation whose equivalence classes produce a $G$-invariant partition. This is little more than an observation that has been made by many others.

\begin{lem}\label{Gx-blocks}
Let $G$ be a group acting transitively on the set $\Omega$. Define an equivalence relation $R$ on $\Omega$ by given $x, y \in \Omega$, $x R y$ iff $G_x=G_y$. Then the equivalence classes of $R$ form a $G$-invariant partition.

In fact, if~\Cref{notn-2} applies, $G_x=G_y$ and $\alpha \in C_r$ with $x\alpha=y$, then the orbits of $\alpha$ are $G$-invariant.
\end{lem}

\begin{proof}
That $R$ is an equivalence relation is clear since the relation is defined based on equality. Let $g \in G$ and $x, y \in \Omega$ with $xRy$. Then $h \in G_{xg}$ if and only if $ghg^{-1}\in G_x$, which is true if and only if $ghg^{-1} \in G_y$, which is true if and only if $h \in G_{yg}$. So $xgRyg$. This proves the first paragraph.

Suppose $x\alpha=y$ with $\alpha \in C_r$ and $G_x=G_y$. Take two arbitrary elements in the same $\alpha$-orbit, say $z$ and $z\alpha^i$ for some $i$, and any $g \in G$. We will show that $(z\alpha^i)g=(zg)\alpha^{\pm i}$, so that $zg$ and $z\alpha^ig$ are in the same $\alpha$-orbit.

Conjugating $G_x$ by $\alpha$ gives $G_x=G_y=G_{x\alpha}=G_{y\alpha}=G_{x\alpha^2}$. Continuing inductively, $G_x=G_{x\alpha^j}$ for every $j$. In particular, $G_x=G_{x\alpha^i}$. Let $h \in G$ such that $xh=z$. Then conjugating by $h$ gives $G_z=G_{z\alpha^i}$. Let $\alpha_1\in R_r$ such that $zg=z\alpha_1$, so $g\alpha_1^{-1} \in G_z=G_{z\alpha^i}$. Then $z\alpha^ig\alpha_1^{-1}=z\alpha^i$, so $$z\alpha^ig=z\alpha^i\alpha_1=z\alpha_1\alpha^{\pm i}=zg\alpha^{\pm i},$$ as desired.
\end{proof}

Frequently when applying the relation above, we will be considering the action $G_{\mathcal B}$ of $G$ on the blocks of some $G$-invariant partition $\mathcal B$. Although this technically defines a relation on the blocks of $\mathcal B$, again we often abuse notation by identifying this relation with the relation it induces on the elements of $\Omega$, defined by $xRy$ iff $B_xRB_y$.

\section{More preliminaries}

In this section we prove some additional preliminary results that we will need to use in our main proofs.

It is important to be aware that if we can find some $\beta\in G^{(2)}$ such that $C_r^{\pi\beta}=C_r$, then $R_r^{\pi\beta}=R_r$, since there is a unique regular dihedral group containing any semiregular cyclic group of index $2$. We show this in the following proposition.

\begin{prop}\label{cyclic-enough}
Suppose that $R_1$ and $R_2$ are regular dihedral permutation groups acting on a set $\Omega$, whose index-$2$ semiregular cyclic subgroups $C_1$ and $C_2$ are equal. Then $R_1=R_2$.
\end{prop}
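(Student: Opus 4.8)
The plan is to collapse the whole problem to a single element. Write $C:=C_1=C_2$, set $k=|C|$ so that $|\Omega|=2k$, and fix involutions $\tau_1,\tau_2\in\Sym(\Omega)$ with $R_i=\langle C,\tau_i\rangle$ for $i=1,2$; since $R_i$ is dihedral with cyclic subgroup $C$ of index $2$, each $\tau_i$ lies outside $C$ and hence inverts $C$ by conjugation (in the paper's convention, $c^{\tau_i}=\tau_i^{-1}c\tau_i=c^{-1}$ for all $c\in C$). The key reduction is that it suffices to prove $\tau_1\tau_2\in C$: this gives $\tau_2\in\langle C,\tau_1\rangle=R_1$, so $R_2=\langle C,\tau_2\rangle\le R_1$, and since $|R_1|=|R_2|=2k$ we conclude $R_1=R_2$.

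First I would record two structural facts about $\theta:=\tau_1\tau_2$. Using that both $\tau_i$ invert $C$, the computation $c^{\theta}=\tau_2^{-1}\big(\tau_1^{-1}c\,\tau_1\big)\tau_2=\tau_2^{-1}c^{-1}\tau_2=c$ shows that $\theta$ centralises $C$. Second, because $C$ is semiregular of index $2$ it has exactly two orbits $O_1,O_2$ on $\Omega$, each of size $k$; as each $\tau_i$ normalises $C$ it permutes $\{O_1,O_2\}$, and it must interchange them, for otherwise $R_i=\langle C,\tau_i\rangle$ would preserve $O_1$ and fail to be transitive. Hence $\theta$ interchanges the two orbits twice, i.e. it preserves each of $O_1$ and $O_2$ setwise.

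Next I would restrict to a single orbit. On $O_1$ the group $C$ acts regularly, and since $C$ is abelian its centraliser in $\Sym(O_1)$ is exactly $C$ restricted to $O_1$; as $\theta$ commutes with $C$ and preserves $O_1$, this produces an element $c_1\in C$ with $\theta=c_1$ on $O_1$, and likewise some $c_2\in C$ with $\theta=c_2$ on $O_2$. The only substantive point of the argument is then to show $c_1=c_2$, so that $\theta$ agrees with the single element $c_1$ on all of $\Omega$, giving $\theta=c_1\in C$. For this I would exploit the involution relation $\theta^{\tau_1}=\tau_1^{-1}\tau_1\tau_2\tau_1=\tau_2\tau_1=\theta^{-1}$. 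Reading both sides on $O_1$: the right-hand side $\theta^{-1}$ acts there as $c_1^{-1}$, while $\theta^{\tau_1}$ acts on $O_1$ by first moving into $O_2$ via $\tau_1$, applying $\theta=c_2$, and returning via $\tau_1$; since $\tau_1$ swaps $O_1\leftrightarrow O_2$ and inverts $C$, this transports $c_2$ to $c_2^{-1}$ on $O_1$. Faithfulness of $C$ on $O_1$ then forces $c_1=c_2$, completing the proof.

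I expect the main obstacle to be the bookkeeping in this final matching step: one must verify carefully that conjugating $\theta|_{O_2}=c_2$ by the orbit-swapping $\tau_1$ yields precisely $c_2^{-1}$ acting on $O_1$, keeping straight both the orbit interchange and the inversion of $C$. Everything else—the orbit count for $C$, the swapping behaviour of each $\tau_i$, and the standard fact that the centraliser of a regular abelian group equals the group itself—is routine.
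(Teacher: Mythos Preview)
Your proof is correct and reaches the same destination as the paper—showing that $\tau_2\in\langle C,\tau_1\rangle$—but the route differs. The paper works concretely with a chosen basepoint: it picks $x\in\Omega$, finds $j$ with $x\tau'=x\tau\sigma^j$, and then verifies $\tau'=\tau\sigma^j$ pointwise, first on the $C$-orbit of $x$ via the relation $\sigma^k\tau'=\tau'\sigma^{-k}$, and then on the second orbit by invoking that both $\tau'$ and $\tau\sigma^j$ are involutions. Your argument is more structural: you set $\theta=\tau_1\tau_2$, observe it centralises $C$ and preserves each $C$-orbit, invoke the fact that a regular abelian group is self-centralising to write $\theta|_{O_i}=c_i$, and then match $c_1=c_2$ via the identity $\theta^{\tau_1}=\theta^{-1}$. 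Your approach buys a cleaner conceptual picture and avoids coordinate-chasing, at the cost of citing the self-centralising fact; the paper's approach is entirely self-contained and elementary but slightly more computational. Both are equally valid and of comparable length.
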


\begin{proof}
Let $\sigma$ generate $C_1=C_2$ acting semiregularly with two orbits on $\Omega$. Let $\tau\in R_1-C_1$, and $\tau' \in R_2-C_1$ (so $\tau$ and $\tau'$ are reflections in the two groups). We will show that $\tau' \in R_1$, which is sufficient.

Let $x \in \Omega$. Note that the orbits of $C_1$ partition $\Omega$ into two sets: $x{C_1}$, and $x{\tau C_1}$. Notice also that we must have $x {\tau'}\in x{\tau C_1}$. Let $j$ be such that $x{\tau'}=x{\tau\sigma^j}$. Then for any $k$, $$(x{\sigma^k}){\tau'}=x{\tau'\sigma^{-k}}=x{\tau\sigma^{j-k}}=(x{\sigma^k}){\tau\sigma^j}.$$ So $\tau'$ has the same action as $\tau\sigma^j$ on every element in the orbit of $x$ under $C_1$.

For any $z \in \Omega$ that is not in the orbit of $x$ under $C_1$, we have $z=y{\tau'}=y{\tau\sigma^j}$ for some $y$ that is in the orbit of $x$ under $C_1$. Since $\tau'$ and $\tau\sigma^j$ are involutions, $y=z{\tau'}=z{\tau\sigma^j}$. So $\tau'$ has the same action as $\tau\sigma^j$ on every element of $\Omega$. Hence $\tau'=\tau\sigma^j \in R_1=\langle \tau,\sigma\rangle$.
\end{proof}

Since we may at times choose an initial $\beta$ that conjugates $\sigma_i$ to $\rho_i$ for a specific $i$, it is helpful to know what we can deduce about how $\rho_i$ interacts with other elements of $G$ once we know that $\sigma_i=\rho_i$.

\begin{lem}\label{commuting}
Use~\Cref{notn-2}. Suppose $\sigma_i=\rho_i$ and $g \in G$. Then $F_1g=F_1$ if and only if $g$ commutes with $\rho_i$, while $F_1g=F_2$ if and only if $g$ inverts $\rho_i$.
\end{lem}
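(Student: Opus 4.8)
\section*{Proof proposal}

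The plan is to package both conditions in the statement as homomorphisms out of $G$ and show that these two homomorphisms coincide. Recall that $F_1,F_2$ are the two blocks of $\mathcal B_s$, which by~\Cref{notn-2} are simultaneously the orbits of $C_r$ and of $C_r^\pi$. The crucial structural input is that $\langle\rho_i\rangle \trianglelefteq G$. Indeed, $\langle\rho_i\rangle$ is the unique, hence characteristic, subgroup of order $p_i$ in the cyclic normal subgroup $C_r \trianglelefteq R_r$, so $\langle\rho_i\rangle \trianglelefteq R_r$; since $\sigma_i=\rho_i$ by hypothesis, the identical argument inside $R_r^\pi$ gives $\langle\rho_i\rangle=\langle\sigma_i\rangle \trianglelefteq R_r^\pi$; as $G=\langle R_r,R_r^\pi\rangle$, normality in both generating subgroups yields $\langle\rho_i\rangle \trianglelefteq G$. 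Consequently conjugation defines a homomorphism $\psi\colon G \to \Aut(\langle\rho_i\rangle)\cong (\Z/p_i\Z)^\times$ determined by $g^{-1}\rho_i g=\rho_i^{\psi(g)}$, and ``$g$ commutes with $\rho_i$'' means $\psi(g)=1$ while ``$g$ inverts $\rho_i$'' means $\psi(g)=-1$.

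On the other hand, since $\{F_1,F_2\}$ is $G$-invariant, the induced action on these two blocks is a homomorphism $\phi\colon G \to \Z_2$, with $\phi(g)=0$ when $g$ fixes each of $F_1,F_2$ and $\phi(g)=1$ when $g$ interchanges them. Composing $\phi$ with the embedding $\Z_2 \hookrightarrow \Aut(\langle\rho_i\rangle)$ sending $0\mapsto 1$ and $1\mapsto -1$ (valid because $p_i$ is odd, so $-1\neq 1$ has order $2$) produces a homomorphism $\chi\colon G \to \Aut(\langle\rho_i\rangle)$. I would then verify that $\psi=\chi$ by checking agreement on the generating set $\rho_1,\dots,\rho_s,\tau_1,\sigma_1,\dots,\sigma_s,\tau_2$ of $G$: each $\rho_j$ and each $\sigma_j$ lies in an abelian group ($C_r$, respectively $C_r^\pi$) containing $\rho_i=\sigma_i$, hence commutes with $\rho_i$ (so $\psi=1$) and fixes each of $F_1,F_2$ (so $\phi=0$ and $\chi=1$); while $\tau_1$ and $\tau_2$ invert $\rho_i=\sigma_i$ (so $\psi=-1$) and each interchanges $F_1$ and $F_2$ (so $\phi=1$ and $\chi=-1$). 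Since two homomorphisms agreeing on a generating set are equal, $\psi=\chi$ on all of $G$.

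With $\psi=\chi$ in hand, the statement follows immediately in both directions, because $\chi$ factors injectively through $\phi$: for any $g\in G$ we have $F_1g=F_1$ iff $\phi(g)=0$ iff $\psi(g)=1$ iff $g$ commutes with $\rho_i$, and $F_1g=F_2$ iff $\phi(g)=1$ iff $\psi(g)=-1$ iff $g$ inverts $\rho_i$. (As a byproduct, every element of $G$ either commutes with or inverts $\rho_i$, since $\psi$ takes only the values $\pm1$.) The only genuine content is the normality of $\langle\rho_i\rangle$ in $G$ together with the bookkeeping that $\phi$ and $\psi$ are honest homomorphisms agreeing on generators; I do not anticipate a serious obstacle, but the step most worth double-checking is that $\tau_1$ and $\tau_2$ really swap $F_1$ and $F_2$ rather than fixing them, which follows from regularity since $\tau_1\notin C_r$ and $\tau_2\notin C_r^\pi$, whose orbits are precisely $F_1$ and $F_2$.
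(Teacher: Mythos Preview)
Your proof is correct and follows essentially the same approach as the paper's: both verify on the generating set $\{\rho_j,\sigma_j,\tau_1,\tau_2\}$ that ``commutes with $\rho_i$'' matches ``fixes $F_1$'' and ``inverts $\rho_i$'' matches ``swaps $F_1,F_2$'', then extend to all of $G$. The paper does the extension by counting the parity of occurrences of $\tau_1,\tau_2$ in a word for $g$, whereas you package the same idea more cleanly as the equality of two homomorphisms $\psi=\chi$ checked on generators (and you make explicit the normality of $\langle\rho_i\rangle$ in $G$, which the paper uses implicitly).
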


\begin{proof}
We know that every element of $C_r$ commutes with $\rho_i$, and $\tau_1$ inverts $\rho_i$. Also, since $\rho_i=\sigma_i$, every element of $C_r^\pi$ commutes with $\rho_i$, and $\tau_2$ inverts $\rho_i$. Since $g \in G=\langle R_r, R_r^\pi\rangle$, we can write $g$ as a word in $\rho_1,\sigma_1, \ldots, \rho_s,\sigma_s, \tau_1,\tau_2$. 

With any such representation of $g$, it is not hard to see that $\rho_i$ commutes with $g$ if the total number of appearances of $\tau_1$ and $\tau_2$ is even, and $\rho_i$ is inverted by $g$ if the total number of appearances of $\tau_1$ and $\tau_2$ is odd. Since every element of $C_r$ and $C_r^\pi$ fixes $F_1$, while $\tau_1$ and $\tau_2$ exchange $F_1$ with $F_2$, we also have $F_1g=F_1$ if and only if the total number of appearances of $\tau_1$ and $\tau_2$ is even, which happens if and only if $g$ commutes with $\rho_i$. Similarly the total number of appearances of $\tau_1$ and $\tau_2$ is odd if and only if $g$ inverts $\rho_i$ and equivalently $F_1g=F_2$.
\end{proof}

Recalling that one condition of~\Cref{sigmas-on-X}(\ref{X-constant}) was that $\sigma_j$ commutes with $\rho_i$, the following result provides conditions under which this is true.

\begin{lem}\label{commuting-refined}
Use~\Cref{notn-2}. Let $\mathcal B$ be a $G$-invariant partition and let $i, j$ be such that $\sigma_i, \rho_i, \sigma_j$ and $\rho_j$ fix every block of $\mathcal B$. Suppose that for each $B \in \mathcal B$, there is some $k_B$ with $1 \le k_B \le p_i-1$ such that $\sigma_i=\rho_i^{k_B}$ on every point of $B$.

Then $\sigma_j$ commutes with $\rho_i$.
\end{lem}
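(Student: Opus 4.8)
The plan is to exploit the fact that $\sigma_i$ and $\sigma_j$ commute—both lie in the abelian (cyclic) group $C_r^\pi$—and to transport this commutation into a statement about $\rho_i$ and $\sigma_j$ on each individual block of $\mathcal B$, using the hypothesis $\sigma_i=\rho_i^{k_B}$.

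First I would fix a block $B\in\mathcal B$ and an arbitrary point $y\in B$, and compute the two products $y\sigma_i\sigma_j$ and $y\sigma_j\sigma_i$. Since $y\in B$, the hypothesis gives $y\sigma_i=y\rho_i^{k_B}$, so $y\sigma_i\sigma_j=y\rho_i^{k_B}\sigma_j$. For the other product, the key observation is that $\sigma_j$ fixes $B$ setwise, so $y\sigma_j\in B$; hence the \emph{same} constant $k_B$ governs the action of $\sigma_i$ at the point $y\sigma_j$, giving $y\sigma_j\sigma_i=y\sigma_j\rho_i^{k_B}$. Because $\sigma_i\sigma_j=\sigma_j\sigma_i$, these two expressions are equal, and therefore $y\rho_i^{k_B}\sigma_j=y\sigma_j\rho_i^{k_B}$ for every $y\in B$. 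In other words, $\rho_i^{k_B}$ commutes with $\sigma_j$ as a permutation of $B$.

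Next I would upgrade this to a statement about $\rho_i$ itself. Since $1\le k_B\le p_i-1$ and $p_i$ is prime, $\gcd(k_B,p_i)=1$, so there is an integer $m$ with $mk_B\equiv 1\pmod{p_i}$; as $|\rho_i|=p_i$ this yields $(\rho_i^{k_B})^m=\rho_i$. A permutation commuting with $\rho_i^{k_B}$ therefore commutes with every power of $\rho_i^{k_B}$, in particular with $\rho_i$; hence $\rho_i$ also commutes with $\sigma_j$ on $B$, that is, $y\rho_i\sigma_j=y\sigma_j\rho_i$ for all $y\in B$. Finally, since $B$ was an arbitrary block and the blocks of $\mathcal B$ partition $\Omega$, this identity holds for every $y\in\Omega$, which is precisely the assertion that $\sigma_j$ commutes with $\rho_i$.

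I expect the only genuinely delicate point to be the observation that a single constant $k_B$ applies simultaneously at $y$ and at $y\sigma_j$; this is what forces the two computations to agree, and it relies crucially on the hypothesis that $\sigma_j$ fixes each block of $\mathcal B$ setwise. Everything else reduces to the short coprimality argument recovering $\rho_i$ from $\rho_i^{k_B}$.
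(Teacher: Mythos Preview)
Your proof is correct and follows essentially the same idea as the paper's: both hinge on the observation that $\sigma_j$ fixes $B$ setwise (so the same constant $k_B$ applies at $y$ and at $y\sigma_j$), together with the commutativity of $\sigma_i$ and $\sigma_j$ and the invertibility of $k_B$ modulo $p_i$. The paper compresses this into a single chain of equalities by writing $\rho_i=\sigma_i^{k_{B_y}^{-1}}$ at the point $y\sigma_j$ and then commuting $\sigma_i$ past $\sigma_j$ directly, whereas you first establish that $\rho_i^{k_B}$ commutes with $\sigma_j$ on $B$ and then raise to the $m$th power; these are the same argument in a slightly different order.
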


\begin{proof}
Let $y \in \Omega$. Then $y\sigma_j \in B_y$, so if we let $k^{-1}_{B_y}$ be the multiplicative inverse of $k_{B_y}$ in $\mathbb Z_{p_i}$ we have $$y\sigma_j\rho_i=y\sigma_j \sigma_i^{k^{-1}_{B_y}}=y\sigma_i^{k^{-1}_{B_y}}\sigma_j=y\rho_i^{k_{B_y}k^{-1}_{B_y}}\sigma_j=y\rho_i\sigma_j.$$
\end{proof}

Since our goal is to find $\beta\in G^{(2)}$ such that $R_r^{\pi\beta}=R_r$, we will frequently need to prove that a particular permutation we define does indeed lie in the $2$-closure of $G$. Our next lemma will allow us to do this without excessive repetition of calculations.

\begin{lem}\label{in-2-closure}
Use~\Cref{notn-2}. Let $\beta$ be a permutation on $\Omega$ that fixes $F_1$ and $F_2$ setwise, and let $u, v \in \Omega$.
Suppose that exists a $G$-invariant partition $\mathcal D$ such that:
\begin{itemize} 
\item there is some $g \in G$ such that $(D_u,D_v)\beta=(D_u,D_v){g}$; and
\item $D_v$ lies in an orbit of $G_u$.
\end{itemize}
Then there is some $h \in G$ such that $(u,v)\beta=(u,v)h$.
\end{lem}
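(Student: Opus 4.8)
The goal is to produce $h\in G$ with $uh=u\beta$ and $vh=v\beta$. The two hypotheses are exactly the block-level and orbit-level data needed to lift the matching: the first gives an element $g\in G$ that agrees with $\beta$ on the pair of blocks $(D_u,D_v)$, and the second controls how much freedom $G_u$ has inside $D_v$. The plan is to fix $g$ with $D_u g=D_u\beta$ and $D_v g=D_v\beta$, and then correct the two coordinates of $(ug,vg)$ to $(u\beta,v\beta)$ one at a time.

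For the first coordinate, note that $ug$ and $u\beta$ both lie in the block $D_u\beta$. I would look for $c\in G$ that fixes the block $D_v\beta$ setwise and satisfies $(ug)c=u\beta$; setting $g_1=gc$ then gives $ug_1=u\beta$ and $D_vg_1=(D_v\beta)c=D_v\beta$. For the second coordinate I would invoke the orbit hypothesis: since $D_v$ lies in a single orbit of $G_u$ and $ug_1=u\beta$, conjugating by $g_1$ shows that $D_vg_1=D_v\beta$ lies in a single orbit of $G_{u\beta}=g_1^{-1}G_ug_1$. Both $vg_1\in D_vg_1=D_v\beta$ and $v\beta\in D_v\beta$ then lie in this one orbit, so there is $s\in G_{u\beta}$ with $(vg_1)s=v\beta$. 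As $s$ fixes $u\beta$, the element $h=g_1s$ satisfies $uh=u\beta$ and $vh=v\beta$, which is $(u,v)\beta=(u,v)h$.

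The step I expect to be the main obstacle is the existence of the correction $c$: it requires the subgroup of $G$ fixing both $D_u\beta$ and $D_v\beta$ setwise to act transitively on $D_u\beta$, which is false for a general transitive group and is where the structure of \Cref{notn-2} and the hypothesis that $\beta$ fixes $F_1,F_2$ must be used. The natural source of such elements is the cyclic normal subgroup $C_r$, whose two orbits are $F_1$ and $F_2$: in the principal case, where the blocks of $\mathcal D$ are orbits of a subgroup $H\le C_r$ (so that $\mathcal D$ refines the partition $\{F_1,F_2\}$), this $H$ fixes every block of $\mathcal D$ setwise and acts transitively on each block contained in a given side, so a suitable $c\in H$ is available. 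The assumption that $\beta$ fixes $F_1$ and $F_2$ is what keeps $ug$, $u\beta$ and the block $D_v\beta$ on the sides for which this cyclic correction exists, and hence what makes the two single-coordinate corrections compatible; reconciling these two corrections without letting the first disturb the block $D_v\beta$ of the second is the crux of the argument.
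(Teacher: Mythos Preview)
Your approach is correct and matches the paper's: both correct the block-level element $g$ first by an element of $C_r$ (your $c$, the paper's $\alpha$) that fixes every block of $\mathcal D$ and sends $ug$ to $u\beta$, and then invoke the orbit hypothesis on $D_v$ to finish the $v$-coordinate. The paper orders the corrections as $h=g_1g\alpha$ with $g_1\in G_u$ rather than your $h=gcs$ with $s\in G_{u\beta}$, which is the same up to conjugation; your stated obstacle is dispatched there by passing to $\mathcal D\cap\mathcal B_s$, whose blocks are orbits of a normal subgroup $H\le C_r$, and using that $\beta$ fixes $F_1,F_2$ so that $ug$ and $u\beta$ lie in the same such block, whence the required $\alpha\in H$ exists and automatically fixes every block of $\mathcal D$.
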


\begin{proof}
Note that the intersections of blocks of $\mathcal B_s$ with blocks of $\mathcal D$ forms a $G$-invariant partition, and by~\Cref{block-form}, its blocks are orbits of some normal subgroup of $C_r$.
We have $ug \in D_u\beta$; since $\beta$ fixes $F_1$ and $F_2$ setwise, 
there is some $\alpha \in C_r$ such that $ug\alpha=u\beta$ and $\alpha$ fixes every block of $\mathcal D$ setwise. 
Now since $\alpha$ fixes every block of $\mathcal D$ and $D_v\beta=D_vg$, it follows that $v\beta\alpha^{-1}g^{-1} \in D_v$. Since $D_v$ lies in an orbit of $G_u$, there is some $g_1 \in G_u$ such that $vg_1=v\beta\alpha^{-1}g^{-1}$, so $vg_1g\alpha=v\beta$. We also have $ug_1g\alpha=ug\alpha=u\beta$. Taking $h=g_1g\alpha$ yields the desired conclusion.
\end{proof}

In several circumstances, we will choose $\beta$ to have the following action, and of course we want to know what conjugation by $\beta$ does to various elements of $R_r^\pi$.

\begin{lem}\label{conj-works-2}
Use~\Cref{notn-2}. Fix $y \in \Omega$ and suppose that for some fixed $i,k$ and for every $j$, $\beta$ acts on $y{\sigma_i^j}$ as $\sigma_i^{-j}\rho_i^{kj}$. Then whenever $z=y{\rho_i^{kj}}$ for some $j$, we have $z{\sigma_i^\beta}=z{\rho_i^k}$.
\end{lem}

\begin{proof}
Let $\ell$ be such that $z=y{\rho_i^{k\ell}}$. Then we have $$z{\sigma_i^\beta}=z{\beta^{-1}\sigma_i\beta}=y{\rho_i^{k\ell}\beta^{-1}\sigma_i\beta}=y{\rho_i^{k\ell}\rho_i^{-k\ell}\sigma_i^\ell\sigma_i\beta}=y{\sigma_i^{\ell+1}\beta}$$
$$=y{\sigma_i^{\ell+1}\sigma_i^{-(\ell+1)}\rho_i^{k(\ell+1)}}=y{\rho_i^{k\ell}\rho_i^k}=z{\rho_i^k}.$$
\end{proof}

We conclude our preliminaries by describing one situation in which we may complete the proof immediately.

\begin{prop}\label{use-cyclic}
Use~\Cref{notn-2}. Suppose that $F_2$ is an orbit of $G_x$.
Then there is some $\beta \in G^{(2)}$ such that $R_r^{\pi\beta}=R_r$.
\end{prop}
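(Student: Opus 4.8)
The plan is to reduce, via \Cref{cyclic-enough}, to producing some $\beta\in G^{(2)}$ with $C_r^{\pi\beta}=C_r$; then $R_r^{\pi\beta}=R_r$ follows automatically. The first thing I would extract from the hypothesis is a strengthening of it: because $x\in F_1$ and $\{F_1,F_2\}=\mathcal B_s$ is a $G$-invariant partition, $F_2$ is in fact a single orbit of $G_u$ for \emph{every} $u\in F_1$. Indeed, if $u=x\alpha$ with $\alpha\in C_r$, then $\alpha$ fixes both $F_1$ and $F_2$ setwise and conjugates $G_x$ to $G_u$, carrying the single $G_x$-orbit $F_2$ onto itself. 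Dually, conjugating by a reflection of $R_r$ that interchanges $F_1$ and $F_2$ shows that $F_1$ is a single orbit of $G_u$ for every $u\in F_2$. So each of the two blocks is a single orbit of every point-stabiliser of points in the \emph{other} block.

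With this in hand I would work with \emph{partial permutations}: permutations that act as the identity on one of $F_1,F_2$ and as the restriction of some element of $G$ on the other. I claim any such $\beta$ lies in $G^{(2)}$. For a pair $(u,v)$ with both points in the same block, $\beta$ agrees on $(u,v)$ with an element of $G$ (either the identity or the chosen element), so there is nothing to check. For a pair in different blocks I would apply \Cref{in-2-closure} with $\mathcal D=\mathcal B_s$: since $\beta$ fixes $F_1$ and $F_2$ setwise, $(D_u,D_v)\beta=(D_u,D_v)$ agrees with the identity of $G$, and the requirement that $D_v$ lie in an orbit of $G_u$ is \emph{exactly} the consequence of the hypothesis established above. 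Thus \Cref{in-2-closure} supplies the needed $h\in G$ with $(u,v)\beta=(u,v)h$.

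The reflection is then easy to straighten. A direct computation shows that the partial permutation $\beta_0$ that is the identity on $F_1$ and acts on $F_2$ as the restriction of $\tau_2\tau_1\in G$ satisfies $\tau_2^{\beta_0}=\tau_1$: for $v\in F_1$ one gets $v\mapsto v\tau_2\tau_2\tau_1=v\tau_1$, and for $w\in F_2$ one gets $w\mapsto w\tau_1\tau_2\tau_2\tau_1\tau_1=w\tau_1$, so $\tau_2^{\beta_0}$ and $\tau_1$ agree everywhere. Hence, after replacing $R_r^\pi$ by $R_r^{\pi\beta_0}$ (which is legitimate by the abuse of notation justified at the end of Section~2, since $\beta_0\in G^{(2)}$), we may assume the two reflections coincide.

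It remains to straighten the cyclic generators, and this is where I expect the real difficulty to be. On the block where a partial permutation is the identity, conjugation does not alter the action of $\sigma_i$ there, so the genuine content is to choose the $G$-parts of two partial permutations — one fixing $F_2$ and correcting the $\sigma_i$ on $F_1$, the other fixing $F_1$ and correcting them on $F_2$ — whose product sends each $\sigma_i$ to a fixed power of $\rho_i$ on the relevant block. Here \Cref{Gy-affine} governs the induced action on each prime-order piece (it is affine, and any transitive restriction contains the cyclic group generated by the image of $\rho_i$), which is what allows the block-by-block powers relating $\sigma_i$ to $\rho_i$ recorded in \Cref{notn-2} to be reconciled; one may also recognise the problem on each block as a regular cyclic action of squarefree order, where \Cref{Muz-cyclic} guarantees the two cyclic groups are conjugate in the $2$-closure of the induced group. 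The main obstacle is to carry out these corrections \emph{simultaneously} for all $i$ and compatibly with the reflection alignment, since the power by which $\sigma_i$ differs from $\rho_i$ varies from block to block and must be realised inside the (restricted) permutations that the orbit condition admits into $G^{(2)}$.
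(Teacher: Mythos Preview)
Your framework is right, and the use of \Cref{in-2-closure} with $\mathcal D=\mathcal B_s$ to push partial permutations into $G^{(2)}$ is exactly what the paper does. But two things are off.

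First, the step with $\beta_0$ is misdirected. Aligning $\tau_2$ with $\tau_1$ up front buys you nothing: any subsequent conjugation by a partial permutation will disturb $\tau_2=\tau_1$ (for $y\in F_1$ you get $y\tau_1^{\beta_1}=y\beta_1^{-1}\tau_1$, not $y\tau_1$), and in any case \Cref{cyclic-enough} makes reflection alignment automatic once the cyclic parts agree. So ``compatibly with the reflection alignment'' is not an obstacle at all.

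Second, and this is the genuine gap, you correctly identify but do not resolve the mismatch between blocks. After invoking \Cref{Muz-cyclic} (via Babai's criterion) separately on $F_1$ and $F_2$, a generator $\gamma$ of $C_r^\pi$ is carried to some generator $\alpha_1$ of $C_r$ on $F_1$ and to a possibly different generator $\alpha_2$ on $F_2$; this is \emph{not} an element of $C_r$. The paper's fix is precisely your map $\beta_0$, but applied \emph{after} the Muzychuk steps and with the \emph{updated} $\tau_2$. The point is that the updated $\tau_2$ still inverts the updated $\gamma^\beta$ (they live in the same dihedral group $R_r^{\pi\beta}$), so for $y\in F_2$ one computes
\[
y\gamma^{\beta\beta_3}=y\tau_1\tau_2\,\gamma^\beta\,\tau_2\tau_1=y\tau_1(\gamma^\beta)^{-1}\tau_1=y\tau_1\alpha_1^{-1}\tau_1=y\alpha_1,
\]
using that $y\tau_1\in F_1$ where $\gamma^\beta$ acts as $\alpha_1$. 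Thus $\gamma^{\beta\beta_3}=\alpha_1$ globally, $C_r^{\pi\beta\beta_3}=C_r$, and \Cref{cyclic-enough} finishes. So the same map you wrote down serves the crucial purpose, just at the other end of the argument and for a different reason.

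A minor technical point: your ``partial permutations'' are stated as restrictions of a \emph{single} element of $G$, but the $\beta_1,\beta_2$ coming from Muzychuk lie only in the $2$-closure of the restricted action. Your $G^{(2)}$ argument still goes through verbatim, since for any pair in (say) $F_1$ there is \emph{some} element of $\langle C_r,C_r^\pi\rangle\le G$ agreeing with $\beta_1$ on that pair; you should say this explicitly.
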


\begin{proof}
Note that the restrictions of $C_r$ and $C_r^\pi$ to $F_1$ are regular cyclic groups of squarefree order. By~\Cref{Muz-cyclic} cyclic groups of this order have the DCI property. Thus by~\Cref{Babai-conj}, there is some $\beta_1 \in \langle C_r, C_r^\pi\rangle$ such that $C_r^{\pi\beta_1}=C_r$, where we are considering only the action of these groups on $F_1$. We can similarly find a $\beta_2$ in the 2-closure of the restriction of these actions to $F_2$ such that $C_r^{\pi\beta_2}=C_r$ on $F_2$. Extend $\beta_1$ and $\beta_2$ to permutations on $\Omega$ by having $\beta_1$ fix every point of $F_2$ and $\beta_2$ fix every point of $F_1$. 

We claim that $\beta=\beta_1\beta_2 \in G^{(2)}$. By our choices of $\beta_1$ and $\beta_2$, if $y,z \in F_i$ with $i \in \{1,2\}$ then it is immediate that there is some $g_i \in \langle C_r, C_r^{\pi}\rangle$ such that $y{\beta}=y{\beta_i}=y{g_i}$ and $z{\beta}=z{\beta_i}=z{g_i}$.  If $y \in F_1$ and $z \in F_2$, then taking $\mathcal D=\mathcal B_s$ in~\Cref{in-2-closure} gives some $g \in G$ such that $(y,z)\beta=(y,z)g$. Thus $\beta \in G^{(2)}$.

Finally to complete the proof we require $R_r^{\pi\beta\beta_3}=R_r$ for some $\beta_3 \in G^{(2)}$. If $y \in F_1$ and $\gamma \in C_r^\pi$ is a generator for $C_r^\pi$, then $y{\gamma^\beta}=y{\gamma^{\beta_1}}=y{\alpha_1}$ for some generator $\alpha_1$ for $C_r$, by the choice of $\beta_1$. Likewise, if $y \in F_2$ then $y{\gamma^\beta}=y{\gamma^{\beta_2}}=y{\alpha_2}$ for some generator $\alpha_2$ for $C_r$, by the choice of $\beta_2$. Unfortunately, it may be the case that $\alpha_2=\alpha_1^k$ for some $k \neq 1$. If this occurs, then we conjugate again by the map $\beta_3$ which acts as the identity on $F_1$, and as $\tau_2\tau_1$ on $F_2$. If $y \in F_2$ then $y{\gamma^{\beta\beta_3}}=y{\tau_1\tau_2\gamma^\beta\tau_2\tau_1}=y{\tau_1(\gamma^{\beta})^{-1}\tau_1}$. Since $y{\tau_1} \in F_1$, $\gamma^{\beta}$ has the same action as $\alpha_1$ on it, so this is $y{\tau_1\alpha^{-1}\tau_1}=y\alpha$. The same reasoning we used above to show that $\beta=\beta_1\beta_2 \in G^{(2)}$ shows $\beta_3 \in G^{(2)}$. 

We now have $C_r^{\pi\beta\beta_3}=C_r$, and by~\Cref{cyclic-enough}, this implies $R_r^{\pi\beta\beta_3}=R_r$.
\end{proof}

In the remaining sections, we will deal one at a time with the possibilities that $G$ is block-regular on $\mathcal B_1$, or $G$ is block-regular on $\mathcal B_2$, or $G$ is block-regular on $\mathcal B_3$. For the second and third of these, we will need to assume $s=3$. Note that when $s \le 3$, the third of these must always be true ($F_1$ is fixed setwise if and only if $F_2$ is fixed setwise).

\section{$G$ is block-regular on $\mathcal B_1$}

In this section we address the possibility that $G$ is block-regular on $\mathcal B_1$. Since this is the strongest of our possible hypotheses about the block-regularity of $G$, it is the only situation in which we are able to complete the conjugation for every value of $s$.

We will be using some additional notation repeatedly from this point, so we introduce it here although much of it will not be required until the next section.

\begin{notn}\label{notn-blocks}
The following partitions will arise in many of our proofs. See~\Cref{X-blocks} and~\Cref{Gx-blocks} in which it was proved that these partitions are  $G$-invariant.
\begin{itemize}
\item We use $\mathcal X$ to denote the $G$-invariant partition consisting of the equivalence classes of $\equiv_{\mathcal B_1}$; and
\item $\mathcal K=\{\{y \in \Omega: G_{B_{1,y}}=G_{B_{1,z}}\}: z \in \Omega\}$.
\end{itemize}
In addition, when there is a $G$-invariant partition $\mathcal C$ with blocks of cardinality $p_2$,
\begin{itemize}
\item  we use $\mathcal Y$ to denote the $G$-invariant partition consisting of the equivalence classes of $\equiv_{\mathcal C}$; and
\item $\mathcal L=\{\{y \in \Omega: G_{C_{y}}=G_{C_{z}}\}: z \in \Omega\}$.
\end{itemize}
\end{notn}

In our first result, we show that we can always conjugate $\sigma_1$ to $\rho_1$ in this situation.

\begin{lem}\label{reg-on-B1}
Use~\Cref{notn-2}. Suppose that $G$ is block-regular on $\mathcal B_1$. Then there is some $\beta \in G^{(2)}$ such that $\sigma_1^\beta=\rho_1$.
\end{lem}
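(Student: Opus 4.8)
The plan is to first pin down exactly how $\sigma_1$ differs from $\rho_1$ across $\Omega$, and then to build $\beta$ as a block-by-block ``correction'' that repairs this difference. By \Cref{X-blocks} the equivalence classes of $\equiv_{\mathcal B_1}$ form the $G$-invariant partition $\mathcal X$ of \Cref{notn-blocks}, and by the final clause of \Cref{sigmas-on-X}(\ref{X-constant}), in the case $i=j=1$, there is for each $X \in \mathcal X$ a constant $k_X$ with $\sigma_1 = \rho_1^{k_X}$ on every point of $X$. Let $X_0$ be the block of $\mathcal X$ containing the distinguished point $x$. Since $\mathcal X \succeq \mathcal B_1$ by \Cref{sigmas-on-X}(\ref{B-in-X}), the block $B_{1,x}$ lies inside $X_0$; and by the third bullet of \Cref{notn-2} (the $i=1$ instance, where $\mathcal B_0$ is the partition into singletons) we have $x\sigma_1 = x\rho_1$. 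Comparing this with $x\sigma_1 = x\rho_1^{k_{X_0}}$ and using that $\rho_1$ is semiregular of order $p_1$ forces $k_{X_0}=1$, so $\sigma_1 = \rho_1$ already holds on all of $X_0$.

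Next I would define $\beta$. It will fix $F_1$ and $F_2$ setwise and fix each block of $\mathcal B_1$ setwise, and on a block $X \in \mathcal X$ it must turn the ``wrong power'' $\rho_1^{k_X}$ into $\rho_1$. Concretely, on each $\langle \sigma_1\rangle$-orbit (equivalently $\langle\rho_1\rangle$-orbit) inside $X$ I would pick a base point $y$ and declare $\beta$ to send $y\sigma_1^j \mapsto y\rho_1^j$ for all $j$. This is precisely the hypothesis of \Cref{conj-works-2} with $i=1$ and $k=1$, so that lemma yields $\sigma_1^\beta=\rho_1$ on that orbit, \emph{whatever} base points we select; in particular $\beta$ is the identity on $X_0$. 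Thus the conclusion $\sigma_1^\beta=\rho_1$ will follow for free once $\beta$ is shown to lie in $G^{(2)}$. Note that each orbit offers a free choice of base point (equivalently, a free additive shift in the induced affine map of multiplier $k_X^{-1}$), and this freedom will have to be exercised carefully in the next step.

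The main obstacle is verifying $\beta \in G^{(2)}$, i.e.\ matching $\beta$ on each pair $(u,v)$ by an element of $G$. Because $\beta$ preserves every block of $\mathcal B_1$, any $g$ with $ug=u\beta$ satisfies $B_{1,u}g=B_{1,u}$, and block-regularity on $\mathcal B_1$ then forces $g$ to fix \emph{every} block of $\mathcal B_1$; so the whole question reduces to realizing $\beta$ on $(u,v)$ by an element of the kernel $N$ of the action of $G$ on the blocks of $\mathcal B_1$. For pairs in distinct blocks (in particular pairs with $u\in F_1$, $v\in F_2$) I would invoke \Cref{in-2-closure} with $\mathcal D = \mathcal B_1$: the block-level condition $(B_{1,u},B_{1,v})\beta=(B_{1,u},B_{1,v})$ is met by the identity, leaving only the requirement that $B_{1,v}$ lie in an orbit of $G_u$, which is where the structure of $N$ on the blocks must be invoked. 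For a pair $u,v$ in a common block $B\subseteq X$ with $k_X\neq 1$, translations alone cannot match an affine map of nontrivial multiplier, so here I expect to apply \Cref{Gy-affine} to the setwise stabiliser of $B$ acting on its $p_1$ points: block-regularity pins this action down to be trivial or affine, and in the affine case $N$ supplies the multiplier $k_X^{-1}$ needed to match $\beta$ on the pair. Organising these cases so that the freely-chosen shifts in the definition of $\beta$ are compatible across blocks sharing a common value of $k_X$, and confirming the orbit condition of \Cref{in-2-closure} in each configuration, is the delicate heart of the argument. The configuration in which $F_2$ is a single orbit of $G_x$ can be set aside at the outset, since \Cref{use-cyclic} already gives the full conclusion there.
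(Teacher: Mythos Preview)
Your overall plan---use the partition $\mathcal X$, define $\beta$ blockwise, verify membership in $G^{(2)}$, and read off $\sigma_1^\beta=\rho_1$ from \Cref{conj-works-2}---is sound, and your handling of the case where $u,v$ lie in distinct $\mathcal X$-blocks via \Cref{in-2-closure} with $\mathcal D=\mathcal B_1$ is exactly right. However, you define $\beta$ at the wrong granularity, and this is where your proposal stalls.

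You build $\beta$ separately on each $\mathcal B_1$-block, leaving a free choice of base point on each, and then hope to reconcile these choices. This leaves two gaps. First, for a pair $u,v$ in the \emph{same} $\mathcal X$-block but different $\mathcal B_1$-blocks, \Cref{in-2-closure} does not apply: $B_{1,v}$ need not lie in an orbit of $G_u$ (that failure is precisely what can put $u$ and $v$ in the same $\mathcal X$-block), and your affine argument inside a single $\mathcal B_1$-block says nothing here. Second, even for $u,v$ in a common $\mathcal B_1$-block, your appeal to \Cref{Gy-affine} only tells you the action of the block stabiliser on $B$ is at worst affine; it does not manufacture the specific multiplier $k_X^{-1}$ that $\beta$ applies. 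You would need to exhibit an element of $G$ realising that multiplier on $B$, and you have not done so.

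The paper avoids both problems with one move: for each block $X\in\mathcal X$, pick a representative $y$ and elements $\alpha_y\in R_r$, $\gamma_y\in R_r^\pi$ with $x\alpha_y=x\gamma_y=y$, and then define $\beta$ on \emph{all} of $X$ by the single group element $\gamma_y^{-1}\alpha_y\in G$. The ``same $\mathcal X$-block'' case of the $2$-closure check becomes trivial (take $g=\gamma_y^{-1}\alpha_y$), and the verification that $\sigma_1^\beta=\rho_1$ is a short conjugation calculation using $\sigma_1=\rho_1$ on $X_x$. In effect, the paper's choice automatically selects compatible base points across all $\mathcal B_1$-blocks in $X$ and simultaneously supplies the element with multiplier $k_X^{-1}$ that your approach was hoping to find. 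If you rewrite your $\beta$ this way, the ``delicate heart'' you anticipated disappears entirely.
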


\begin{proof}
Use $\mathcal X$ from~\Cref{notn-blocks} also.  For each block $X \in \mathcal X$, choose some representative point $y$, with $x$ being one of these representatives, choosing $y \in F_1$ if possible. For each representative $y$, define $\alpha_y$, $\gamma_y$ to be the unique elements of $R_r$ and $R_r^\pi$ (respectively) such that $x{\alpha_y}=x{\gamma_y}=y$. For every $z \in X_y$, define $z\beta=z{\gamma_y^{-1}\alpha_y}$. Note that since for every representative $y$ we have $y\beta=y$ and $G$ is block-regular on $\mathcal B_1$, $\beta$ fixes every block of $\mathcal B_1$ setwise.

We claim that $\beta \in G^{(2)}$, and that $\sigma_1^{\beta}=\rho_1$. 

Suppose $u, v \in \Omega$. If $u,v \in X_y$ then taking $g=\gamma_{y}^{-1}\alpha_{y}$ gives an element $g \in G$ such that $(u,v)\beta=(u,v)g$. If  $u$ and $v$ are in different blocks of $\mathcal X$, then $B_v$ lies in an orbit of $G_u$. Taking $\mathcal D=\mathcal B_1$ in~\Cref{in-2-closure} gives $h \in G$ such that $(u,v)\beta=(u,v)h$. Thus $\beta \in G^{(2)}$.

For any $u \in X_y$, since $\mathcal B_1 \preceq \mathcal X$ by~\Cref{sigmas-on-X}(\ref{B-in-X}), we have $$u{\sigma_1^\beta}=u{\beta^{-1}\sigma_1\beta}=u{\alpha_{y}^{-1}\gamma_{y}\sigma_1\gamma_{y}^{-1}\alpha_{y}}=u{\alpha_{y}^{-1}\sigma_1\alpha_{y}}.$$ We have $X_y{\alpha_{y}^{-1}}=X_x$, so $u{\alpha_y^{-1}}=v$ for some $v \in X_x$. Noting that $\sigma_1$ and $\rho_1$ have identical actions on $X_x$ (using~\Cref{sigmas-on-X}(\ref{X-constant})), this gives
$$u{\sigma_1^\beta}=v{\sigma_1\alpha_{y}}=v{\rho_1\alpha_{y}}=v{\alpha_y\rho_1}=u{\rho_1}.$$
Since $u$ was arbitrary, this completes the proof that $\sigma_1^\beta=\rho_1$.
\end{proof}

With this in hand, we can use one argument to conjugate any of the remaining generators of $C_r^\pi$.

\begin{lem}\label{reg-on-B1-done}
Use~\Cref{notn-2}. Fix $i \in \{2, \ldots, s\}$. Suppose that $G$ is block-regular on $\mathcal B_1$ and that $\sigma_m=\rho_m$ for each $1 \le m <i$. Then there is some $\beta_i \in G^{(2)}$ such that $\sigma_m^{\beta_i}=\sigma_m$ for each $1 \le m \le i$.
\end{lem}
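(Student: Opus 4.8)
The plan is to construct $\beta_i \in G^{(2)}$ that conjugates $\sigma_i$ to $\rho_i$ while fixing the generators already matched in the induction hypothesis; reading the conclusion as $\sigma_m^{\beta_i}=\rho_m$ for all $1\le m\le i$, this amounts to $\sigma_m^{\beta_i}=\sigma_m=\rho_m$ for $m<i$ together with $\sigma_i^{\beta_i}=\rho_i$ (the literal statement $\sigma_m^{\beta_i}=\sigma_m$ for all $m\le i$ holds for the identity, so it is the matching of $\sigma_i$ to $\rho_i$ that carries the content). The first step extracts the consequence of block-regularity: that $G$ is block-regular on $\mathcal B_1$ says precisely that $G_{\mathcal B_1}$ acts regularly on its $2p_2\cdots p_s$ blocks. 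Since this regular group contains the (transitive, hence full) images of $R_r$ and of $R_r^\pi$, those images coincide, so the image of $\sigma_i$ is a power of the image of $\rho_i$ in $G_{\mathcal B_1}$. Consequently the orbits of $\langle\rho_1,\rho_i\rangle$ and of $\langle\sigma_1,\sigma_i\rangle$ coincide on $\Omega$: they project to the same set of $\mathcal B_1$-blocks, and each is a union of full $\mathcal B_1$-blocks because $\sigma_1=\rho_1$. By~\Cref{orbits-blocks} these common orbits form a $G$-invariant partition $\mathcal C$ with blocks of cardinality $p_1p_i$.

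With $\mathcal C$ in hand I would establish the local model $\sigma_i=\rho_i^{k_C}$ on each $C\in\mathcal C$. The natural tool is~\Cref{sigmas-on-X}(\ref{X-constant}) applied to the partition $\mathcal X$ of equivalence classes of $\equiv_{\mathcal B_1}$ (which is $G$-invariant by~\Cref{X-blocks}), with the index pair $(\rho_1,\rho_i)$: the hypothesis that $\sigma_i$ commute with $\rho_1=\sigma_1$ holds because $C_r^\pi$ is abelian, so once $\mathcal B_1\preceq\mathcal C\preceq\mathcal X$ is known the conclusion gives a constant $k_C$ with $\sigma_i=\rho_i^{k_C}$ on $C$. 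Since $\mathcal C$ consists of orbits of $\langle\sigma_1,\sigma_i\rangle$, the element $\sigma_i$ fixes each $C$ setwise, so every $\sigma_i$-orbit coincides as a set with a $\rho_i$-orbit inside $C$, using $\gcd(k_C,p_i)=1$. This shared-orbit structure is what allows a single permutation to convert the one generator into the other.

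I would then define $\beta_i$ orbit-by-orbit on these common orbits by sending $y\sigma_i^{\,j}$ to $y\rho_i^{\,j}$, i.e.\ letting $\beta_i$ act as $\sigma_i^{-j}\rho_i^{\,j}$ at the point $y\sigma_i^{\,j}$; then~\Cref{conj-works-2} with $k=1$ yields $\sigma_i^{\beta_i}=\rho_i$. Two verifications complete the argument. That $\beta_i$ fixes the earlier generators follows because each local factor $\sigma_i^{-j}\rho_i^{\,j}$ commutes with $\rho_m=\sigma_m$ for $m<i$ (abelianness of $C_r$ and $C_r^\pi$ together with the matching $\sigma_m=\rho_m$), so a direct orbit-chase gives $\sigma_m^{\beta_i}=\sigma_m$. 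That $\beta_i\in G^{(2)}$ I would deduce from~\Cref{in-2-closure} with $\mathcal D=\mathcal C$: within one block of $\mathcal C$ the map $\beta_i$ agrees with an element of $G$, and for points in different blocks the partition $\mathcal X$ (equivalently, the stabiliser structure forced by block-regularity) supplies the requirement that $D_v$ lie in an orbit of $G_u$, exactly as in the proof of~\Cref{reg-on-B1}.

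The step I expect to be the main obstacle is securing the containment $\mathcal C\preceq\mathcal X$ needed to invoke~\Cref{sigmas-on-X}(\ref{X-constant}), together with the clean global consistency of the per-block constants $k_C$; this is where the regularity of $G_{\mathcal B_1}$ must be used carefully, and where the blocks lying in $F_2$ — on which the reflections $\tau_1,\tau_2$ act — require separate attention to ensure that both the orbit-alignment and the commuting computation survive. Once these are settled, the $2$-closure verification via~\Cref{in-2-closure} should be routine.
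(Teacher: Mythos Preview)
Your reading of the statement and the overall strategy are right, and you correctly isolate the crux: the containment $\mathcal C\preceq\mathcal X$. The gap is that this containment can genuinely fail, so you cannot simply ``secure'' it. The paper does not prove $\mathcal C\preceq\mathcal X$; instead it splits into cases. A key preliminary (which you omit) is that block-regularity on $\mathcal B_1$ together with~\Cref{pth-power} forces $\sigma_i\rho_i^{-1}$ to fix every block of $\mathcal B_1$. Hence if $\mathcal X\succeq\mathcal C$, then~\Cref{sigmas-on-X}(\ref{X-constant}) gives $\sigma_i=\rho_i^{k_C}$ on each $C$, but the fact that $\sigma_i\rho_i^{-1}$ fixes each $\mathcal B_1$-block forces $k_C=1$, so $\sigma_i=\rho_i$ already and $\beta_i$ is the identity. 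The substantive construction is therefore needed only when $\mathcal X\not\succeq\mathcal C$, in which case each $X\in\mathcal X$ satisfies $X\sigma_i\neq X$, and $\beta_i$ is defined to be constant on blocks of $\mathcal X$ (not on $\sigma_i$-orbits or on blocks of $\mathcal C$): on $X_{z}\sigma_i^{\,j}$ it acts as $\sigma_i^{-j}\rho_i^{\,j}$. The induction hypothesis is used via~\Cref{sigmas-on-X}(\ref{C-in-X}) to get $\mathcal X\succeq\mathcal B_{i-1}$, which is what guarantees that this $\beta_i$ commutes with $\sigma_m=\rho_m$ for $m<i$.

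There is also a mismatch in your $2$-closure check. You propose $\mathcal D=\mathcal C$ in~\Cref{in-2-closure}, but the equivalence $\equiv_{\mathcal B_1}$ only tells you that $B_{1,v}$ (not $C_v$) lies in an orbit of $G_u$ when $v\notin X_u$. The paper takes $\mathcal D=\mathcal B_1$: since $\sigma_i\rho_i^{-1}$ fixes every $\mathcal B_1$-block, so does $\beta_i$, hence the first bullet of~\Cref{in-2-closure} holds with $g=1$, and the second bullet is exactly what $\mathcal X$ provides. With these two adjustments --- the case split on $\mathcal X\succeq\mathcal C$ versus not, and the use of $\mathcal B_1$ rather than $\mathcal C$ in the $2$-closure verification --- your outline becomes the paper's proof.
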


\begin{proof}
Using~\Cref{notn-2}, we know that $x\sigma_i=x\rho_i\alpha_i$ for some $\alpha_i \in \langle \rho_j : 1 \le j \le i-1\rangle$. Since $G$ is block-regular on $\mathcal B_1$, this means that $\sigma_i\rho_i^{-1}\alpha_i^{-1}$ fixes every block of $\mathcal B_1$, so for every $B \in \mathcal B_1$, $B\sigma_i=B\rho_i\alpha_i$. Applying~\Cref{pth-power} to $G_{\mathcal B_1}$, we must have $\alpha_i$ in the kernel of $G_{\mathcal B_1}$, so $\sigma_i\rho_i^{-1}$ fixes every block of $\mathcal B_1$. 
Also, $\sigma_1=\rho_1$ is centralised by $\sigma_i$.

Observe that the orbits of $\langle \rho_1, \rho_i\rangle$ form a $G$-invariant partition $\mathcal C_i$. This follows from~\Cref{orbits-blocks} because as we have just observed, in $G_{\mathcal B_1}$, $\sigma_i$ and $\rho_i$ have the same action, so their orbits coincide.

Use $\mathcal X$ from~\Cref{notn-blocks}. Since $\sigma_m=\rho_m$ commutes with $\sigma_1=\rho_1$ for every $1 \le m <i$, we conclude using~\Cref{sigmas-on-X}(\ref{C-in-X}) that $\mathcal X \succeq \mathcal B_{i-1}$.

If $\mathcal X \succeq \mathcal C_i$, then~\Cref{sigmas-on-X}(\ref{X-constant}) tells us that for every $C \in \mathcal C_i$ there is a constant $k_{C}$ such that $\sigma_i=\rho_i^{k_C}$ on any point of $C$. Since $\sigma_i\rho_i^{-1}$ fixes every block of $\mathcal B_1$, we must have $k_C=1$ for every $C \in \mathcal C_i$, and thus $\sigma_i=\rho_i$ already. So we may assume that $\mathcal X \not\succeq \mathcal C_i$, from which it is straightforward to deduce that for each $X \in \mathcal X$, $X\sigma_i\neq X$.

For each block $X \in \mathcal X$, choose a representative point $y \in X$, with $x$ being one of these representatives; if possible, choose $y \in F_1$. Let $\gamma_y \in R_r^\pi$ and $\alpha_y \in R_r$ be such that $x\alpha=x\gamma=y$. 
 
For each $y \in \Omega$, define $Y_y=\{X_y\sigma_i^j: 0 \le j \le p_i-1\}$. For each $Y_y$ choose a representative point $z_y$, with $x=z_x$ and $z_y \in F_1$ whenever possible. 

Define $\beta_i$ as follows. Let $z_y$ be a representative for $Y_y$. If $z \in X_{z_y\sigma_i^j}$ with $0 \le j \le p_i-1$, then $z \beta_i=z \sigma_i^{-j}\rho_i^j$. 

We show first that $\beta_i \in G^{(2)}$. Let $u, v \in \Omega$. If $v \in X_u$ then we have $(u,v)\beta_i=(u,v)\sigma_i^{-j}\rho_i^j$ for some fixed $j$. If $v \notin X_u$ then $B_{1,v}$ lies in an orbit of $G_u$. Note that $\beta_i$ fixes every block of $\mathcal B_1$ setwise. Thus~\Cref{in-2-closure} produces some $h \in G$ such that $(u,v)\beta_i=(u,v)h$. Thus $\beta_i \in G^{(2)}$.

Now  since $\mathcal X \succeq \mathcal B_{i-1}$ and on any block of $\mathcal X$ we have $\beta_i=\sigma_i^{-j}\rho_i^j$ for some fixed $j$, which commutes with $\sigma_m=\rho_m$ whenever $1 \le m \le i-1$, we have $\sigma_m^{\beta_i}=\sigma_m$.  
 Also, applying~\Cref{conj-works-2} with any choice of $y$ and with $k=1$, we see that $\sigma_i^{\beta_i}=\rho_i$. This completes the proof.
\end{proof}

We tie the results from this section together into one corollary to make it easier to use later.

\begin{cor}\label{cor-reg-B1}
Use~\Cref{notn-2}. Suppose that $G$ is block-regular on $\mathcal B_1$. Then there is some $\beta \in G^{(2)}$ such that $R_r^{\pi\beta}=R_r$.
\end{cor}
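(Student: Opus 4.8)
The plan is to prove \Cref{cor-reg-B1} by chaining together \Cref{reg-on-B1} and \Cref{reg-on-B1-done}, converting the two into a sequence of conjugations that one-by-one aligns every generator of $C_r^\pi$ with the corresponding generator of $C_r$. The starting observation is \Cref{cyclic-enough} (via \Cref{cyclic-enough} and the remark preceding \Cref{cyclic-enough}): it suffices to produce $\beta \in G^{(2)}$ with $C_r^{\pi\beta}=C_r$, since then $R_r^{\pi\beta}=R_r$ follows automatically because there is a unique regular dihedral group containing a given index-$2$ semiregular cyclic subgroup. So I will aim only to conjugate the cyclic part $C_r^\pi=\langle\sigma_1,\ldots,\sigma_s\rangle$ onto $C_r=\langle\rho_1,\ldots,\rho_s\rangle$.

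The induction proceeds as follows. First I apply \Cref{reg-on-B1} to obtain $\beta_1 \in G^{(2)}$ with $\sigma_1^{\beta_1}=\rho_1$. Invoking the abuse of notation justified at the end of Section~2 (legitimate because $\langle R_r,R_r^{\pi\beta_1}\rangle^{(2)}\le\langle R_r,R_r^\pi\rangle^{(2)}$ and block-regularity on $\mathcal B_1$ is preserved, as $\mathcal B_1$ remains $G$-invariant by \Cref{2-closure-invariant}), I replace $R_r^\pi$ by its conjugate and henceforth treat $\sigma_1=\rho_1$. Now I induct on $i$ from $2$ to $s$: assuming $\sigma_m=\rho_m$ for all $1\le m<i$, the hypotheses of \Cref{reg-on-B1-done} are met, so there is $\beta_i\in G^{(2)}$ with $\sigma_m^{\beta_i}=\sigma_m$ for $1\le m<i$ and $\sigma_i^{\beta_i}=\rho_i$. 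Applying the abuse of notation again lets me assume $\sigma_m=\rho_m$ for all $1\le m\le i$ without disturbing the earlier alignments (\Cref{reg-on-B1-done} fixes the previously-aligned generators) and without losing block-regularity on $\mathcal B_1$.

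After the final step $i=s$, I have (after all the replacements) $\sigma_m=\rho_m$ for every $1\le m\le s$, i.e.\ $C_r^\pi=C_r$ as permutation groups. Unwinding the abuse of notation, this means the composite $\beta=\beta_1\beta_2\cdots\beta_s$ lies in $G^{(2)}$ and satisfies $C_r^{\pi\beta}=C_r$. By \Cref{cyclic-enough} we conclude $R_r^{\pi\beta}=R_r$, which is exactly the claim.

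The only real subtlety — and the point that deserves a careful sentence — is verifying that the ``replace $R_r^\pi$'' abuse of notation is valid across the whole chain, so that the individual $\beta_i$ (each guaranteed to lie in the $2$-closure of the \emph{current} group) genuinely compose to an element of the original $G^{(2)}=\langle R_r,R_r^\pi\rangle^{(2)}$. This is precisely what the closing paragraph of Section~2 secures: each conjugation only shrinks the relevant $2$-closure and preserves all $G$-invariant partitions (hence block-regularity on $\mathcal B_1$), so the hypotheses needed at stage $i$ survive from stage $i-1$. No genuinely new computation is required; the work is entirely in assembling the prior lemmas and confirming the hypotheses of \Cref{reg-on-B1-done} persist at each stage.
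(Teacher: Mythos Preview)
Your proposal is correct and follows essentially the same approach as the paper's own proof: apply \Cref{reg-on-B1} to align $\sigma_1$ with $\rho_1$, then inductively apply \Cref{reg-on-B1-done} to align each remaining $\sigma_i$ with $\rho_i$ while preserving the earlier alignments, and finish with \Cref{cyclic-enough}. One minor imprecision: your stated reason that block-regularity on $\mathcal B_1$ persists (namely, that $\mathcal B_1$ remains $G$-invariant via \Cref{2-closure-invariant}) is not by itself sufficient---$G$-invariance of the partition does not automatically yield block-regularity---but the claim is nonetheless true because each $\beta_i$ constructed in \Cref{reg-on-B1} and \Cref{reg-on-B1-done} fixes every block of $\mathcal B_1$ setwise, so the induced action on $\mathcal B_1$ is unchanged after each replacement.
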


\begin{proof}
\Cref{reg-on-B1} shows that after conjugation by some element $\beta_1$ of $G^{(2)}$, we have $R_r^{\pi\beta_1}$ has the element $\sigma_1^{\beta_1}=\rho_1$. We proceed to use~\Cref{reg-on-B1-done} inductively, to show that once we have $\sigma_i^{\beta_1\cdots\beta_k}=\rho_i$ for every $1 \le i \le k<s$, there exists $\beta_{k+1} \in G^{(2)}$ such that $\sigma_{k+1}^{\beta_1\cdots \beta_{k+1}}=\rho_{k+1}$ and $\sigma_i^{\beta_1\cdots\beta_{k+1}}=\rho_i$ for every $1 \le i \le k$.

Finally, taking $\beta=\beta_1\cdots \beta_s$, we arrive at $C_r^{\pi\beta}=C_r$, and so by~\Cref{cyclic-enough}, $R_r^{\pi\beta}=R_r$. 
\end{proof}

\section{$G$ is block-regular on $\mathcal B_2$}

In this section, we consider what happens if  $G$ is block-regular on $\mathcal B_2$. 

We begin with a result that is not specific to this section, but that we did not previously require.

\begin{lem}\label{K-in-X}
Use~\Cref{notn-2} and~\Cref{notn-blocks} and suppose that the orbits of $\langle \rho_2 \rangle$ are $G$-invariant so that $\mathcal Y$ and $\mathcal L$ are defined. Then $\mathcal Y \succeq \mathcal K$, and $\mathcal X \succeq \mathcal L$.
\end{lem}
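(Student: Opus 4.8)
The plan is to prove the two refinements separately but to note at the outset that they are interchangeable: since the hypothesis of the lemma makes the orbits of both $\langle\rho_1\rangle$ and $\langle\rho_2\rangle$ into $G$-invariant partitions, \Cref{reordering} lets me swap the roles of $p_1$ and $p_2$ while preserving all of \Cref{notn-2}. This swap carries $\mathcal B_1\leftrightarrow\mathcal C$, $\mathcal X\leftrightarrow\mathcal Y$ and $\mathcal K\leftrightarrow\mathcal L$, so $\mathcal Y\succeq\mathcal K$ is the image of $\mathcal X\succeq\mathcal L$ and it suffices to establish the latter. Concretely I must show that $G_{C_y}=G_{C_z}$ implies $y\equiv_{\mathcal B_1}z$.

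First I would reduce this to a statement about a \emph{single} $G$-invariant partition. Applying \Cref{Gx-blocks} to the action of $G$ on the blocks of $\mathcal C$, the equality $G_{C_y}=G_{C_z}$ produces some $\alpha\in C_r$ with $C_z=C_y\alpha$ for which the orbits of $\langle\rho_2,\alpha\rangle$ form a $G$-invariant partition $\mathcal D$; because $C_r$ is cyclic this group equals $\langle\rho_i:i\in I\rangle$ for an index set $I\ni 2$, and $y,z$ lie in a common block of $\mathcal D$. Thus it is enough to prove $\mathcal X\succeq\mathcal D$, for then $z\in D_y\subseteq X_y$. (The possibility that $y,z$ lie in different halves $F_1,F_2$, where one needs a reflection rather than $\alpha\in C_r$, I would handle as a separate short case.)

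The clean half of $\mathcal X\succeq\mathcal D$ is the case $1\notin I$, i.e.\ when the blocks of $\mathcal D$ have cardinality coprime to $p_1$. Then for any $z\in D_y$ we have $B_{1,z}\cap D_y=\{z\}$; since $\mathcal D$ is $G$-invariant, $G_y$ fixes $D_y$ setwise, so every element of $G_y$ that fixes $B_{1,z}$ setwise fixes the point $z$. As $|B_{1,z}|=p_1>1$, \Cref{Gy-affine} shows $B_{1,z}$ is not contained in a single orbit of $G_y$, so $y\equiv_{\mathcal B_1}z$ by a one-step chain, and $\mathcal X\succeq\mathcal D$. This is exactly the single-point mechanism already used in \Cref{sigmas-on-X}(\ref{C-in-X}).

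The hard part, and where I expect the real obstacle, is the case $1\in I$: now $\mathcal D\succeq\mathcal B_1$, whole $\mathcal B_1$-blocks sit inside $D_y$, and the single-point argument collapses. My plan here is to peel off $\rho_1$: set $K'=\langle\rho_i:i\in I\setminus\{1\}\rangle$ and prove that the orbits of $K'$ \emph{already} form a $G$-invariant partition $\mathcal D'$. Granting this, the coprime case gives $\mathcal X\succeq\mathcal D'$, and since $\mathcal X\succeq\mathcal B_1$ while $\mathcal D=\mathcal D'\vee\mathcal B_1$ is a join of two partitions each below $\mathcal X$, we get $\mathcal X\succeq\mathcal D$. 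The orbits of $K'$ are visibly $R_r$-invariant (as $K'\trianglelefteq R_r$ and $\rho_1$ centralises $K'$, carrying $K'$-orbits to $K'$-orbits), so by \Cref{orbits-blocks} it remains only to match them with the orbits of $\langle\sigma_i:i\in I\setminus\{1\}\rangle$. I expect this matching to be the crux: using that the orbits of $\langle\rho_i:i\in I\rangle$ and $\langle\sigma_i:i\in I\rangle$ coincide, together with orbits of $\langle\sigma_2\rangle=$ orbits of $\langle\rho_2\rangle$, disposes of the $p_2$-direction at once, but controlling the remaining primes $p_i$ with $i\ge 3$ — for which no individual invariance is available a priori — seems to require exploiting the grid structure inside a $\mathcal D$-block coming from the common row partition $\mathcal B_1=$ orbits of $\langle\rho_1\rangle=$ orbits of $\langle\sigma_1\rangle$, perhaps after invoking \Cref{reordering} to bring the primes of $\mathcal D$ to the front with $p_1$ kept first. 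Reducing the lemma to this matching of two complementary cyclic orbit-partitions is, I believe, where the substance of the argument lies.
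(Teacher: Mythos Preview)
Your detour through the auxiliary partition $\mathcal D$ is unnecessary and creates an artificial obstacle that leaves your argument incomplete. The paper's proof is two lines: from $G_{C_y}=G_{C_z}$ you get $G_y\le G_{C_y}=G_{C_z}$, so $G_y$ fixes $C_z$ setwise. Since $|C_z|=p_2$ and $|B_{1,z}|=p_1$ are coprime, $B_{1,z}\cap C_z=\{z\}$; hence any element of $G_y$ fixing $B_{1,z}$ setwise must fix $z$, so $B_{1,z}$ cannot be a single $G_y$-orbit. That is already a one-step $\equiv_{\mathcal B_1}$-chain, giving $y\equiv_{\mathcal B_1}z$ directly. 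The other inclusion is symmetric (no reordering needed).

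Your mistake is replacing $C_z$, which the hypothesis hands you as a $G_y$-invariant set of size coprime to $p_1$, by the larger block $D_y$. Once you enlarge to $\mathcal D$ you lose control of the intersection with $\mathcal B_1$, and this is exactly what forces your ``hard case'' $1\in I$, for which you have only a plan and not a proof. The matching-of-orbits step you identify as the crux is a genuine difficulty in your route, but it is entirely self-inflicted: the hypothesis already provides a set ($C_z$) that meets $B_{1,z}$ in a single point, so the mechanism of \Cref{sigmas-on-X}(\ref{C-in-X}) applies immediately without any case split or appeal to \Cref{Gx-blocks} or \Cref{reordering}.
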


\begin{proof}
Suppose $y$ and $z$ are in the same block of $\mathcal K$ so that $G_{B_{1,y}}=G_{B_{1,z}}$. Then $G_y$ fixes $B_{1,z}$ setwise so $C_z$ cannot lie in a single orbit of $G_y$. The other proof is similar.
\end{proof}

We first show that whenever $\mathcal X \succeq \mathcal B_2$, we have a second $G$-invariant partition with blocks of prime cardinality.

\begin{lem}\label{2-regular-fix-sig-1-part-1}
Use~\Cref{notn-2} and~\Cref{notn-blocks}. Suppose that $\mathcal X \succeq \mathcal B_2$. Then the orbits of $\langle \rho_2 \rangle$ form a $G$-invariant partition.
\end{lem}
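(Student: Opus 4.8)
The plan is to reduce the statement, via~\Cref{orbits-blocks}, to showing that $\langle \rho_2\rangle$ and $\langle \sigma_2\rangle$ have the same orbits on $\Omega$; since $\langle\rho_2\rangle$ is a cyclic subgroup of odd (prime) order in $R_r$, that equality of orbits is exactly equivalent to the orbits of $\langle\rho_2\rangle$ forming a $G$-invariant partition. To get the orbit coincidence I would work one block of $\mathcal B_2$ at a time and aim to show that on each block $C$ of $\mathcal B_2$ we have $\sigma_2=\rho_2^{k_C}$ for some $k_C\not\equiv 0 \pmod{p_2}$. Since $\sigma_2$ is semiregular and so has no fixed points, any such $k_C$ is automatically nonzero, whence $\langle\sigma_2\rangle=\langle\rho_2^{k_C}\rangle=\langle\rho_2\rangle$ on $C$ and the orbits agree block-by-block, hence globally.

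The mechanism for producing the constants $k_C$ is the first part of~\Cref{sigmas-on-X}(\ref{X-constant}), applied with $i=1$, $j=2$, and $\mathcal C=\mathcal B_2$ (the orbits of $\langle\rho_1,\rho_2\rangle$): the chain $\mathcal B_1\preceq\mathcal B_2\preceq\mathcal X$ holds because $\mathcal B_1\preceq\mathcal B_2$ always and $\mathcal B_2\preceq\mathcal X$ is precisely our hypothesis. That lemma, however, also demands that $\sigma_2$ commute with $\rho_1$, so the crux of the argument is establishing this commuting relation.

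To establish that $\sigma_2$ commutes with $\rho_1$ I would invoke~\Cref{commuting-refined} with $\mathcal B=\mathcal X$, $i=1$, $j=2$. Its hypotheses are verified as follows. First, $\rho_1,\sigma_1,\rho_2,\sigma_2$ all fix every block of $\mathcal X$ setwise: $\rho_1,\sigma_1$ fix every block of $\mathcal B_1\preceq\mathcal X$, while $\rho_2,\sigma_2$ fix every block of $\mathcal B_2$, and $\mathcal B_2\preceq\mathcal X$ makes every $\mathcal X$-block a union of $\mathcal B_2$-blocks. Second, the ``Furthermore'' clause of~\Cref{sigmas-on-X}(\ref{X-constant}) supplies, for each $X\in\mathcal X$, a constant $k_X$ with $\sigma_1=\rho_1^{k_X}$ on $X$; semiregularity of $\sigma_1$ forces $1\le k_X\le p_1-1$, matching the hypothesis of~\Cref{commuting-refined}. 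The conclusion is exactly that $\sigma_2$ commutes with $\rho_1$.

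With the commuting relation in hand, the first part of~\Cref{sigmas-on-X}(\ref{X-constant}) delivers the constants $k_C$, and the orbit argument of the first paragraph finishes the proof through~\Cref{orbits-blocks}. \textbf{The main obstacle} is organizing this sequence of reductions; in particular, recognizing that the hypothesis $\mathcal X\succeq\mathcal B_2$ does double duty: it lets $\rho_2,\sigma_2$ fix the blocks of $\mathcal X$ so that~\Cref{commuting-refined} yields that $\sigma_2$ commutes with $\rho_1$, and it simultaneously provides the inclusion $\mathcal B_2\preceq\mathcal X$ needed to unlock~\Cref{sigmas-on-X}(\ref{X-constant}). No genuinely hard computation is expected; the content lies in threading these earlier lemmas together in the right order.
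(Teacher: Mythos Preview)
Your proposal is correct and follows essentially the same route as the paper: reduce via \Cref{orbits-blocks} to equality of orbits, use the ``Furthermore'' clause of \Cref{sigmas-on-X}(\ref{X-constant}) to get $\sigma_1=\rho_1^{k_X}$ on each $X$, feed this into \Cref{commuting-refined} to obtain that $\sigma_2$ commutes with $\rho_1$, and then apply the first part of \Cref{sigmas-on-X}(\ref{X-constant}) with $\mathcal C=\mathcal B_2$. The only cosmetic difference is that the paper invokes \Cref{commuting-refined} with $\mathcal B=\mathcal B_2$ (after restricting the $\mathcal X$-constants to $\mathcal B_2$-blocks) whereas you invoke it directly with $\mathcal B=\mathcal X$; both choices are valid and the argument is otherwise identical.
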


\begin{proof}
Using~\Cref{orbits-blocks}, it is sufficient to show that the orbits of $\sigma_2$ are the same as the orbits of $\rho_2$. 

By~\Cref{sigmas-on-X}(\ref{X-constant}) on any block $X \in \mathcal X$ there is a constant $k_X$ such that $\sigma_1=\rho_1^{k_X}$ everywhere on $X$. In particular, since $\mathcal X \succeq \mathcal B_2$,on any block $B \in \mathcal B_2$ there is some $k_X$ such that $\sigma_1=\rho_1^{k_X}$ everywhere on $B$. By~\Cref{commuting-refined}, $\sigma_2$ commutes with $\rho_1$. 

This shows that the conditions of~\Cref{sigmas-on-X}(\ref{X-constant}) are satisfied for $i=1$, $j=2$, and $\mathcal C=\mathcal B_2$.  Thus for any $B \in \mathcal B_2$ there is a constant $k_B$ such that $\sigma_2=\rho_2^{k_B}$ everywhere on $B$. Since $B$ was arbitrary, 
 the orbits of $\langle \sigma_2 \rangle$ coincide with the orbits of $\langle \rho_2 \rangle$.
\end{proof}

\begin{lem}\label{2-regular-fix-sig-2-part-1}
Use~\Cref{notn-2} and~\Cref{notn-blocks}. If $\mathcal X \not\succeq \mathcal B_2$ then there is some $\beta \in G^{(2)}$ such that after replacing $R_r^\pi$ by $R_r^{\pi\beta}$, the new $\mathcal X$ has $\mathcal X \succeq\mathcal B_2$.
\end{lem}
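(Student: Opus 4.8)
The plan is to produce $\beta$ so that, after replacing $R_r^\pi$ by $R_r^{\pi\beta}$, the orbits of $\langle\rho_2\rangle$ become $G$-invariant; this is enough. Indeed, once the orbits of $\langle\rho_2\rangle$ form a $G$-invariant partition $\mathcal C$, \Cref{sigmas-on-X}(\ref{C-in-X}) gives $\mathcal X\succeq\mathcal C$, while \Cref{sigmas-on-X}(\ref{B-in-X}) always gives $\mathcal X\succeq\mathcal B_1$. For any $y$, the block $X_y$ then contains both $B_{1,y}$ and the $\langle\rho_2\rangle$-orbit $C_y$ of $y$; since $\mathcal X\succeq\mathcal B_1$, it also contains $B_{1,y\rho_2^m}$ for every $m$, and these blocks union to $B_{2,y}$, so $X_y\supseteq B_{2,y}$ and $\mathcal X\succeq\mathcal B_2$. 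By \Cref{orbits-blocks}, making the orbits of $\langle\rho_2\rangle$ $G$-invariant is the same as arranging that $\sigma_2^\beta$ has the same orbits as $\rho_2$, so that is the target.

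First I would pin down the structure forced by $\mathcal X\not\succeq\mathcal B_2$. Fix $B\in\mathcal B_2$ and let $N$ be the subgroup of $G$ fixing every block of $\mathcal B_2$ setwise; since $G$ is block-regular on $\mathcal B_2$, $N$ equals the stabiliser $G_B$ for every $B$, and $N$ is transitive on $B$ because it contains $\langle\rho_1,\rho_2\rangle$. The nonempty sets $X\cap B$ with $X\in\mathcal X$ form an $N$-invariant partition of $B$ (as $N$ preserves $\mathcal X$ and fixes $B$), so by transitivity they share a common size; being a union of $\mathcal B_1$-blocks, this size is $p_1$ or $p_1p_2$. If it were $p_1p_2$ for one $B$ it would be so for all (as $G$ is transitive on $\mathcal B_2$), giving $\mathcal X\succeq\mathcal B_2$; hence each $X\cap B$ is a single $\mathcal B_1$-block. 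As $\sigma_2$ permutes the $p_2$ blocks of $\mathcal B_1$ inside $B$ in a single $p_2$-cycle, it follows that $X\sigma_2\neq X$ for every $X\in\mathcal X$, so $\langle\sigma_2\rangle$ partitions $\mathcal X$ into orbits of size $p_2$.

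With this in hand I would mimic \Cref{reg-on-B1-done}. For each $\langle\sigma_2\rangle$-orbit $Y=\{X\sigma_2^j:0\le j\le p_2-1\}$ on $\mathcal X$ choose a representative block $X_{z_Y}$, and let $\beta$ act on $X_{z_Y}\sigma_2^j$ as the group element $\sigma_2^{-j}\rho_2^{j}$. Then $\beta$ is a well-defined bijection fixing $F_1$ and $F_2$ setwise, and \Cref{conj-works-2} (applied at each point of each representative block, with $k=1$) gives $\sigma_2^\beta=\rho_2$, so the orbits of $\langle\rho_2\rangle$ and $\langle\sigma_2^\beta\rangle$ coincide, exactly as the reduction above requires. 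To see $\beta\in G^{(2)}$ I would use \Cref{in-2-closure}: for $u,v$ in the same block of $\mathcal X$, $\beta$ agrees on the pair with the single element $\sigma_2^{-j}\rho_2^{j}$; for $u,v$ in distinct blocks of $\mathcal X$ we have $u\not\equiv_{\mathcal B_1}v$, so $B_{1,v}$ lies in an orbit of $G_u$ and \Cref{in-2-closure} with $\mathcal D=\mathcal B_1$ applies.

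The main obstacle is precisely this last step. In \Cref{reg-on-B1-done} the element $\sigma_i^{-j}\rho_i^{j}$ fixes every block of $\mathcal B_1$ (there $\sigma_i$ and $\rho_i$ agree at the $\mathcal B_1$-level), which makes the first hypothesis of \Cref{in-2-closure} hold trivially with $g$ the identity. Here $\sigma_2$ and $\rho_2$ agree on the blocks of $\mathcal B_1$ only up to the block-dependent exponent $j_B$ of \Cref{notn-2}, so $\sigma_2^{-j}\rho_2^{j}$ need not fix the blocks of $\mathcal B_1$ and the block-level hypothesis is not automatic. The crux is therefore to arrange that $\beta$ fixes every block of $\mathcal B_1$. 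I expect this to be handled either by using the exponent $k=j_B$ in place of $1$ (which still gives coinciding $\langle\rho_2\rangle$- and $\langle\sigma_2^\beta\rangle$-orbits, since $\gcd(j_B,p_2)=1$) after proving that $j_B$ is constant on each $\langle\sigma_2\rangle$-orbit's union of $\mathcal B_2$-blocks, or by a preliminary conjugation that aligns $\sigma_2$ with $\rho_2$ on the quotient action on $\mathcal B_1$. Observe that $G_{\mathcal B_1}$ is generated by two dihedral groups and, since block-regularity on $\mathcal B_2$ descends, is block-regular on the prime-order partition given by the orbits of $\overline{\rho_2}$; thus \Cref{reg-on-B1} can be run in that quotient to force all $j_B=1$, after which the construction above goes through with $\beta$ fixing every block of $\mathcal B_1$ and \Cref{in-2-closure} applies exactly as in the block-regular-on-$\mathcal B_1$ case.
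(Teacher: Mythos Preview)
Your approach matches the paper's: choose a representative $X$ from each $\langle\rho_2\rangle$-orbit on $\mathcal X$, define $\beta$ on $X\sigma_2^{\,i}$ as $\sigma_2^{-i}\rho_2^{ki}$ for an appropriate $k$, verify $\beta\in G^{(2)}$ via \Cref{in-2-closure} with $\mathcal D=\mathcal B_1$, and then use \Cref{orbits-blocks} and \Cref{sigmas-on-X}(\ref{C-in-X}) to conclude $\mathcal X\succeq\mathcal B_2$. You also correctly isolate the one real issue: with $k=1$ the element $\sigma_2^{-i}\rho_2^{i}$ need not fix the blocks of $\mathcal B_1$, so the block-level hypothesis of \Cref{in-2-closure} is not automatic.

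The paper resolves this obstacle more directly than either of your suggested fixes. Rather than proving that $j_B$ is constant along an orbit, it simply \emph{defines} $m_X$ by the equation $X\sigma_2=X\rho_2^{m_X}$ at the level of blocks of $\mathcal X$; such an $m_X$ exists because $\sigma_2$ permutes the blocks of $\mathcal X$ and these blocks, within a $\langle\rho_2\rangle$-orbit, are exactly $X,X\rho_2,\dots,X\rho_2^{p_2-1}$. With this choice, $\beta$ sends $X\sigma_2^{\,i}$ to $X\rho_2^{m_Xi}=X\sigma_2^{\,i}$, so $\beta$ fixes every block of $\mathcal X$. Since each block of $\mathcal X$ meets each block of $\mathcal B_2$ in at most one block of $\mathcal B_1$, this immediately forces $\beta$ to fix every block of $\mathcal B_1$. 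Your fix~(a) is equivalent to this (the constancy of $j_B$ across the $\mathcal B_2$-blocks meeting $X$ is exactly the well-definedness of $m_X$), but the paper's formulation makes the verification a one-line observation rather than a separate lemma.

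Your fix~(b) has a gap: you invoke block-regularity of $G$ on $\mathcal B_2$, but that is not a hypothesis of this lemma (it is only assumed later in \Cref{reg-on-B2-done} and is explicitly false in \Cref{3-reg-done}, where this lemma is also applied). Even setting that aside, $G_{\mathcal B_1}$ is not obviously generated by two \emph{regular} dihedral groups, so \Cref{reg-on-B1} does not transfer to the quotient without further work. Since fix~(a) (equivalently the paper's $m_X$ device) goes through cleanly, you should drop fix~(b).
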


\begin{proof}
From each orbit of $\langle \rho_2 \rangle$ on $\mathcal X$, choose a single representative block of $\mathcal X$. Define $\beta$ to fix every point in each of these representative blocks. If $X$ is a representative block and $X{\sigma_2}=X{\rho_2^{m_X}}$, then on $X{\sigma_2^i}$ define $\beta$ to act as $\sigma_2^{-i}\rho_2^{m_Xi}$. 

By the way we have defined $\beta$, it fixes every block of $\mathcal B_1$ (since $\mathcal X \not\succeq \mathcal B_2$, each block of $\mathcal X$ meets any block of $\mathcal B_2$ in at most one block of $\mathcal B_1$). 

Let $u,v \in \Omega$. If $v \in X_u$ then by the definition of $\beta$, there is some $i$ and some $m_{X_u}$ such that $(u,v)\beta=(u,v)\sigma_2^{-i}\rho_2^{m_{X_u}i}$. If $v \notin X_u$ then $B_{1,v}$ lies in an orbit of $G_u$. By~\Cref{in-2-closure} we conclude that there is some $h \in G$ such that $(u,v)\beta=(u,v)h$.  Thus $\beta \in G^{(2)}$.

Taking $i=2$ and on the orbit of any representative block $X$ taking $k=m_X$ in~\Cref{conj-works-2} yields $\sigma_i^\beta=\rho_i^{m_X}$ on that orbit. Thus we have the orbits of $\langle \sigma_2^\beta \rangle$ are the same as the orbits of $\langle \rho_2\rangle$, and therefore form a $G$-invariant partition $\mathcal C$ by~\Cref{orbits-blocks}.

By~\Cref{sigmas-on-X}(\ref{C-in-X}), $\mathcal X \succeq \mathcal C$; since we also have $\mathcal X \succeq \mathcal B_1$ and $\mathcal B_2$ is the smallest $R_r$-invariant partition that follows both $\mathcal B_1$ and $\mathcal C$ in our partial order, we must have $\mathcal X \succeq \mathcal B_2$.
\end{proof}

This is enough to allow us to complete the proof that $D_{2pq}$ is a CI$^{(2)}$-group; however, since our goal is to deal with $D_{2pqr}$, we will not provide a direct proof but instead will continue with additional results that will be needed for these groups.

Unfortunately, from this point on, details get very complicated and it seems necessary to restrict our attention to the case $s=3$.

\begin{lem}\label{sig1sig2difonF2}
Use~\Cref{notn-2} with $s=3$. Suppose for every $y \in F_1$ and every $k \in \{1,2,3\}$, $y\sigma_k=y\rho_k$, and that there are constants $i,j \neq 1$ such that for every $z \in F_2$, $z\sigma_1=z\rho_1^j$, $z\sigma_2=z\rho_2^i$, and $z\sigma_3=z\rho_3$. Then there is some $\beta \in G^{(2)}$ such that $R_r^{\pi\beta}=R_r$.
\end{lem}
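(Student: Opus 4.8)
The plan is to produce an explicit $\beta\in G^{(2)}$ that is the identity on $F_1$ and ``untwists'' $F_2$, and then invoke \Cref{cyclic-enough}. Concretely, I set $g_0=\tau_2\tau_1\rho_3^{t_3}$ for a power $t_3$ fixed below, and define $\beta$ to fix $F_1$ pointwise and to agree with $g_0$ on $F_2$; note $\beta$ then fixes $F_1$ and $F_2$ setwise, and $g_0\in G$. Since on $F_2$ the restrictions of $C_r$ and $C_r^\pi$ generate the same cyclic group (as $i,j$ are invertible mod $p_1,p_2$), all of $\rho_k,\sigma_k$ act on $F_2$ inside one abelian group, which keeps the conjugation computations clean. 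If I can show $\sigma_k^\beta=\rho_k$ for $k=1,2,3$ and that $\beta\in G^{(2)}$, then $C_r^{\pi\beta}=C_r$, and \Cref{cyclic-enough} gives $R_r^{\pi\beta}=R_r$.

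The heart of the construction is the identity $\sigma_k^{\tau_2\tau_1}=\rho_k$ on $F_2$. To see it I use the relations $\tau_2^{-1}\sigma_k\tau_2=\sigma_k^{-1}$ and $\tau_1^{-1}\rho_k\tau_1=\rho_k^{-1}$ together with the hypothesis $\sigma_k=\rho_k$ on $F_1$: for $z\in F_2$ we have $z\tau_1\in F_1$, so $z\,\sigma_k^{\tau_2\tau_1}=z\tau_1\sigma_k^{-1}\tau_1=z\tau_1\rho_k^{-1}\tau_1=z\rho_k$. In particular $\sigma_3=\rho_3$ forces $\tau_2\tau_1$ to commute with $\rho_3$ on $F_2$; since $\langle\rho_3\rangle$ is transitive on the $p_3$ blocks of $\mathcal B_2$ contained in $F_2$, the permutation $\tau_2\tau_1$ acts on these blocks as some power $\rho_3^{-t_3}$, and I take exactly this $t_3$, so that $g_0=\tau_2\tau_1\rho_3^{t_3}$ fixes every block of $\mathcal B_2$ inside $F_2$ setwise. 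As $\rho_3^{t_3}$ centralizes each $\sigma_k$ on $F_2$, conjugation by $g_0$ still carries $\sigma_k$ to $\rho_k$ there, while on $F_1$ the map $\beta$ is the identity and $\sigma_k=\rho_k$ by hypothesis; hence $\sigma_k^\beta=\rho_k$ everywhere.

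It remains to prove $\beta\in G^{(2)}$, i.e.\ that every ordered pair $(u,v)$ is moved by some element of $G$ exactly as $\beta$ moves it. For $u,v\in F_1$ take the identity; for $u,v\in F_2$ take $g_0\in G$, which agrees with $\beta$ on all of $F_2$. The remaining case is $u,v$ on opposite sides, which I handle with \Cref{in-2-closure} applied to $\mathcal D=\mathcal B_2$: because $\beta$ fixes every block of $\mathcal B_2$ (by the choice of $t_3$ on $F_2$, and trivially on $F_1$), the block-level hypothesis holds with the identity, so I only need $B_{2,v}$ to lie in a single orbit of $G_u$. When $u\in F_1$ this holds because $\sigma_1\rho_1^{-1}$ and $\sigma_2\rho_2^{-1}$ fix $F_1$ pointwise (as $\sigma_k=\rho_k$ on $F_1$) while on $F_2$ they equal the nontrivial powers $\rho_1^{j-1},\rho_2^{i-1}$ (here $i,j\neq1$ is used), so $\langle\sigma_1\rho_1^{-1},\sigma_2\rho_2^{-1}\rangle\le G_u$ is transitive on each $\mathcal B_2$-block of $F_2$; the symmetric elements $\sigma_1\rho_1^{-j},\sigma_2\rho_2^{-i}$ fix $F_2$ pointwise and settle the case $u\in F_2$.

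The main obstacle is this last case: unlike the within-side pairs, no single element of $G$ realizes $\beta$ across both halves, so everything hinges on verifying the two hypotheses of \Cref{in-2-closure} for $\mathcal B_2$. This is precisely where the two features of the statement enter — $\sigma_3=\rho_3$ aligns the $\mathcal B_2$-block structure so that the block-level condition holds with the identity and no block-shift survives (indeed, since $\sigma_3=\rho_3$ everywhere, no element of $G$ could shift the $\mathcal B_2$-blocks of $F_2$ while fixing those of $F_1$, which is why the correction by $\rho_3^{t_3}$ is essential), and $i,j\neq1$ guarantees that the stabilizer elements $\sigma_k\rho_k^{-1}$ are transitive on the relevant blocks. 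Once both are in place the proof closes immediately via \Cref{cyclic-enough}.
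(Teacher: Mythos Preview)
Your proof is correct and follows essentially the same strategy as the paper: define $\beta$ to be the identity on $F_1$ and a single group element on $F_2$, verify $\beta\in G^{(2)}$ by showing (via $i,j\neq1$) that $B_{2,v}$ lies in an orbit of $G_u$ for cross-side pairs and invoking \Cref{in-2-closure}, then conclude with \Cref{cyclic-enough}. The only cosmetic difference is the choice of the $F_2$-element: the paper takes $\gamma\tau_1$ with $\gamma\in R_r^\pi$ determined by $x\gamma=x\tau_1$ and then appeals to block-regularity of $G$ on $\mathcal B_2$ (a consequence of the hypotheses, since $\sigma_3=\rho_3$ globally forces $C_r$ and $C_r^\pi$ to agree on $\mathcal B_2$), whereas you take $\tau_2\tau_1\rho_3^{t_3}$ and pin down $t_3$ directly from the commutation of $\tau_2\tau_1$ with $\rho_3$ on $F_2$; your route is slightly more self-contained in that it never invokes block-regularity.
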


\begin{proof}
Since $i-1 \in \mathbb Z_{p_2}^*$ it has a multiplicative inverse, say $i'$. Likewise, $j-1$ has a multiplicative inverse $j'$ in $\mathbb Z_{p_1}^*$. For any $z \in F_2$ and any $a \in \mathbb Z_{p_1}, b \in \mathbb Z_{p_2}$, we have 
$$z(\sigma_1\rho_1^{-1})^{aj'}(\sigma_2\rho_2^{-1})^{bi'}=z\rho_1^{aj'(j-1)}\rho_2^{bi'(i-1)}=z\rho_1^a\rho_2^b,$$
while for any $y \in F_1$, $y(\sigma_1\rho_1^{-1})^{aj'}(\sigma_2\rho_2^{-1})^{bi'}=y$. Thus $B_{2,z}$ lies in an orbit of $G_y$.

Let $\gamma\in R_r^\pi$ be such that $x\tau_1=x\gamma$. Define $\beta$ to fix every point of $F_1$, and for $z \in F_2$, $z\beta=z\gamma\tau_1$. Since $z\gamma\tau_1=z$ and $G$ is block-regular on $\mathcal B_2$, $\beta$ fixes every block of $\mathcal B_2$. If $u,v \in F_1$ then $(u,v)\beta=(u,v)$; if $u,v\in F_2$ then $(u,v)\beta=(u,v)\gamma\tau_1$. If $u \in F_1$ and $v \in F_2$ then by~\Cref{in-2-closure}, there is some $h \in G$ such that $(u,v)\beta=(u,v)h$. Thus $\beta \in G^{(2)}$. For $k \in \{1,2,3\}$, if $y \in F_1$ then $y\sigma_k^\beta=y\sigma_k=y\rho_k$, while if $z \in F_2$ then $$z\sigma_k^\beta=z\tau_1\gamma\sigma_k\gamma\tau_1=z\tau_1\sigma_k^{-1}\tau_1.$$ Since $z\tau_1\in F_1$, this is the same as $z\tau_1\rho_k^{-1}\tau_1=z\rho_k$. 

Thus $C_r^{\pi\beta}=C_r$, and~\Cref{cyclic-enough} completes the proof.
\end{proof}

\begin{lem}\label{p1p3-blocks}
Use~\Cref{notn-2} with $s=3$. Suppose that $G$ is block-regular on $\mathcal B_2$, that the orbits of $\langle \rho_2\rangle$ form a $G$-invariant partition $\mathcal C$, and that the orbits of either $\langle \rho_1,\rho_3\rangle$ or $\langle \rho_2,\rho_3 \rangle$ form a $G$-invariant partition $\mathcal D$. Then we can find $\beta \in G^{(2)}$ such that $R_r^{\pi\beta}=R_r$. 
\end{lem}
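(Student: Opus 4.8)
The plan is to split on a single dichotomy and route each branch into a result already proved. Throughout write $\ell$ for the ``leftover'' index ($\ell=1$ when $\mathcal D$ is the orbits of $\langle\rho_2,\rho_3\rangle$, and $\ell=2$ when $\mathcal D$ is the orbits of $\langle\rho_1,\rho_3\rangle$) and $m$ for the ``shared'' index, so that $\{m,\ell\}=\{1,2\}$ and $p_3$ is the common top prime; note the meet of $\mathcal B_2$ and $\mathcal D$ is the partition $\mathcal M$ into orbits of $\langle\rho_m\rangle$, which is $G$-invariant (it is $\mathcal C$ when $m=2$ and $\mathcal B_1$ when $m=1$). First I would record the structural consequence of the hypothesis: since $G$ is block-regular on $\mathcal B_2$, the induced group $G_{\mathcal B_2}$ on the $2p_3$ blocks is regular, and as it contains the regular dihedral group $(R_r)_{\mathcal B_2}$ of the same order $2p_3$ it equals it; hence the stabiliser in $G_{\mathcal B_2}$ of the block $B_{2,x}$ is trivial, and therefore \emph{$G_x$ fixes every block of $\mathcal B_2$ setwise.}

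The dichotomy is on whether $G$ is block-regular on $\mathcal D$. Both $(R_r)_{\mathcal D}$ and $(R_r^\pi)_{\mathcal D}$ are regular dihedral of order $2p_\ell$ on the blocks of $\mathcal D$, with cyclic index-$2$ subgroups generated by the block actions of $\rho_\ell$ and $\sigma_\ell$; by \Cref{notn-2} these two actions agree at the block of $x$, so by \Cref{cyclic-enough} (applied to the action on $\mathcal D$) block-regularity of $G$ on $\mathcal D$ is \emph{equivalent} to $\sigma_\ell$ and $\rho_\ell$ inducing the same permutation of the blocks of $\mathcal D$. In the affirmative case the same point-stabiliser argument as above gives that $G_x$ also fixes every block of $\mathcal D$; since each block of $\mathcal M$ is an intersection of a block of $\mathcal B_2$ with a block of $\mathcal D$, it follows that $G_x$ fixes every block of $\mathcal M$. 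As $\langle\rho_m\rangle$ lies in the kernel of the action on $\mathcal M$ and is transitive on each (prime-sized) block of $\mathcal M$, an orbit-stabiliser count upgrades ``$G_x$ fixes every block of $\mathcal M$'' to ``$G$ is block-regular on $\mathcal M$''. Reordering the primes by \Cref{reordering} so that $\mathcal M$ becomes $\mathcal B_1$ (legitimate since $\mathcal M\preceq\mathcal B_s$), \Cref{cor-reg-B1} then produces the desired $\beta$ with $R_r^{\pi\beta}=R_r$.

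In the negative branch $\sigma_\ell$ and $\rho_\ell$ disagree on some block of $\mathcal D$; since they agree at the block of $x\in F_1$, the discrepancy is confined to $F_2$. Here I would first match $C_r^\pi$ to $C_r$ on $F_1$ using the cyclic DCI property (\Cref{Muz-cyclic}) exactly as in the opening of the proof of \Cref{use-cyclic}, extending by the identity on $F_2$; this permutation lies in $G^{(2)}$ by \Cref{in-2-closure}, so we may assume $\sigma_k=\rho_k$ on $F_1$ for every $k$. The alignments $\sigma_1=\rho_1^{\bullet}$ on blocks of $\mathcal B_1$ and $\sigma_2=\rho_2^{\bullet}$ on blocks of $\mathcal C$ (both from \Cref{orbits-blocks}), combined with \Cref{pth-power} and \Cref{sigmas-on-X}(\ref{X-constant}) to force constant exponents $j,i\neq1$ across $F_2$, together with the block-level identity $\sigma_3=\rho_3$ on $\mathcal B_2$ upgraded to a pointwise identity on $\Omega$, put us precisely in the hypotheses of \Cref{sig1sig2difonF2}, which completes the proof.

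The main obstacle is this negative branch, and specifically the verification that it really lands in \Cref{sig1sig2difonF2}: one must (a) confirm, via \Cref{pth-power} and \Cref{sigmas-on-X}, that the exponents of $\sigma_1,\sigma_2$ relative to $\rho_1,\rho_2$ are genuinely \emph{constant} on all of $F_2$ rather than merely locally defined; (b) promote $\sigma_3=\rho_3$ from the block level of $\mathcal B_2$ to a pointwise statement, which is where block-regularity on $\mathcal B_2$ and a chain argument of the $\equiv_{\mathcal B}$ type must be used; and (c) possibly apply \Cref{reordering} once more so that the perfectly aligned prime occupies the index-$3$ slot demanded by \Cref{sig1sig2difonF2}. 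By contrast, the affirmative branch is essentially bookkeeping once the point-stabiliser-triviality observation is in hand.
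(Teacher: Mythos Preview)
Your affirmative branch (block-regular on $\mathcal D$) is clean and correct: block-regularity on both $\mathcal B_2$ and $\mathcal D$ forces block-regularity on their meet $\mathcal M$, and after reordering \Cref{cor-reg-B1} finishes. This is different from, and arguably tidier than, the paper's route.

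The negative branch, however, has a genuine gap right at the start. You propose to match $C_r^\pi$ to $C_r$ on $F_1$ via \Cref{Muz-cyclic}, extend by the identity on $F_2$, and cite \Cref{in-2-closure} for membership in $G^{(2)}$. But \Cref{in-2-closure} requires, for $u\in F_1$ and $v\in F_2$, a $G$-invariant partition whose block through $v$ lies in a single orbit of $G_u$. With $\mathcal B_s$ that means $F_2$ is an orbit of $G_x$---exactly the hypothesis of \Cref{use-cyclic}, which you do not have here; with $\mathcal B_2$ it fails because block-regularity on $\mathcal B_2$ makes $G_u$ fix every $\mathcal B_2$-block setwise, so orbits of $G_u$ are contained in (not equal to) $B_{2,v}$. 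No finer choice helps in general. So the extension-by-identity map need not lie in $G^{(2)}$, and everything downstream of it (your items (a)--(c)) is unsupported. There is also a secondary issue: \Cref{sig1sig2difonF2} needs \emph{both} exponents $i,j\neq1$, whereas failure of block-regularity on $\mathcal D$ only forces the $\ell$-exponent to differ from $1$; the case where the $m$-exponent equals $1$ must be routed separately to \Cref{cor-reg-B1}, and you do not do this.

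For comparison, the paper does not split on block-regularity of $\mathcal D$. It uses $\mathcal D$ structurally: since $|C_z\cap D_y|=1$ for $y,z$ in the same half, no $C_z$ can lie in an orbit of $G_y$, giving $\mathcal Y\succeq\mathcal B_3$ outright. Then \Cref{sigmas-on-X}(\ref{X-constant}) yields $\sigma_2=\rho_2$ on $F_1$ and $\sigma_2=\rho_2^i$ on $F_2$, and a short argument shows $\sigma_3\rho_3^{-1}$ fixes every $\mathcal B_1$-block. The remaining work---which is exactly the content of your obstacles (a),(b)---is then done by an honest case split on whether $\mathcal X\succeq\mathcal B_3$: in one case a chain argument plus \Cref{pth-power} gives $\sigma_3=\rho_3$ pointwise; in the other an explicit $\beta_1\in G^{(2)}$ is constructed block-by-block. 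Both cases then terminate in \Cref{cor-reg-B1} or \Cref{sig1sig2difonF2} according to whether $j=1$ or $j\neq1$.
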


\begin{proof}
By~\Cref{reordering} we may exchange $p_1$ with $p_2$ if necessary, so without loss of generality let us assume that the orbits of $\langle \rho_1, \rho_3 \rangle$ form a $G$-invariant partition. Note that the intersection of any block of this partition with any block of $\mathcal B_2$ is either empty, or a single block of $\mathcal B_1$.

Use~\Cref{notn-blocks}. By~\Cref{sigmas-on-X}(\ref{C-in-X}), we have $\mathcal B_1,\mathcal C \preceq \mathcal X$, so (as in the proof of~\Cref{2-regular-fix-sig-2-part-1}), $\mathcal B_2 \preceq \mathcal X$. For any $y, z \in F_y$, we have $|C_z\cap D_y|=1$. Since $D_y$ must be fixed setwise by $G_y$, whenever $C_z$ is fixed setwise by an element of $G_y$ the point of intersection must be fixed. Therefore $C_z$ cannot lie in an orbit of $G_y$, so $y \equiv_{\mathcal C} z$. We conclude  that $\mathcal B_3 \preceq \mathcal Y$, 

There are now only two possibilities for $\mathcal Y$: $\mathcal Y=\mathcal B_3$, or $\mathcal Y=\{\Omega\}$. In either case, using~\Cref{sigmas-on-X}(\ref{X-constant}), we have $\sigma_2=\rho_2$ on $F_1$, and there is some $i$ such that $\sigma_2=\rho_2^i$ on $F_2$. In the latter case, we also have $i=1$ and $\sigma_2=\rho_2$.

Since the orbits of $\langle \rho_1,\rho_3\rangle$ are $G$-invariant, they coincide with the orbits of $\langle \sigma_1,\sigma_3\rangle$ by~\Cref{orbits-blocks}. Thus the orbits of $\rho_3$ on the blocks of $\mathcal B_1$ must coincide with the orbits of $\sigma_3$ on these blocks, so for every $B \in \mathcal B_1$ there is some $a_B$ such that $B\sigma_3=B\rho_3^{a_B}$. Since by~\Cref{notn-2} $B_{2,x}\sigma_3\rho_3^{-1}=B_{2,x}$ and $G$ is block-regular on $\mathcal B_2$, $\sigma_3\rho_3^{-1}$ must fix every block of $\mathcal B_2$, so we must have $a_B=1$ for every $B$. Thus $\sigma_3\rho_3^{-1}$ fixes every block of $\mathcal B_1$.

If $i=1$ (in particular if $\mathcal Y=\{\Omega\}$) then we have now shown that $\langle C_r, C_r^\pi\rangle$ is block-regular on $\mathcal B_1$, so by~\Cref{cyclic-enough}, so must $G$ be. Now~\Cref{cor-reg-B1} completes the proof.

We may now assume that $\mathcal Y=\mathcal B_3$ and $i \neq 1$. We consider two possibilities for the action of $\sigma_1$: either $\sigma_1=\rho_1$ on $F_1$ and there is some $j$ such that $\sigma_1=\rho_1^j$ on $F_2$, or there exist $y,z$ with $z \in F_y$ such that $y\sigma_1=y\rho_1^{j_1}$ and $z \sigma_1=z\sigma_1^{j_2}$ and $j_1 \neq j_2$.

\textbf{Case 1. $\mathbf{ \mathcal X \succeq \mathcal B_3.}$}
By~\Cref{sigmas-on-X}(\ref{X-constant}) since $x \sigma_1=x\rho_1$ we have $\sigma_1=\rho_1$ on $F_1$, and there is some $j$ such that $\sigma_1=\rho_1^j$ on $F_2$. 

Given $y \in \Omega$, let $k$ be such that $y\sigma_3\rho_3^{-1}=y\rho_1^k$, so that $\sigma_3 \rho_3^{-1}\rho_1^{-k} \in G_y$. Let $y_1, y_2, \ldots, y_\ell$ be any $\equiv_{\mathcal B_1}$-chain starting at $y$, and suppose inductively that $\sigma_3\rho_3^{-1}\rho_1^{-k} \in G_{y_i}$. Since $\rho_1$ commutes with $\sigma_3$ (by~\Cref{commuting-refined}), the fact that $B_{1,y_{i+1}}$ does not lie in an orbit of $G_{y_i}$ implies that $\sigma_3\rho_3^{-1}\rho_1^{-k}$ must fix every point of $B_{1,y_{i+1}}$, so $\sigma_3\rho_3^{-1}\rho_1^{-k} \in G_{y_{i+1}}$. Since $\mathcal X \succeq \mathcal B_3$, this implies that $\sigma_3\rho_3^{-1}\rho_1^{-k}$ fixes every point of $F_y$. Therefore $\sigma_3=\rho_3\rho_1^k$ everywhere on $F_y$.
Now by~\Cref{pth-power}, we must have $k=0$. Thus $\sigma_3=\rho_3$ on $F_y$, and since $y$ was arbitrary, everywhere. 

If $j=1$ then $\langle C_r, C_r^\pi\rangle$ is block-regular on $\mathcal C$, so by~\Cref{cyclic-enough}, $G$ is also, and~\Cref{cor-reg-B1} completes the proof. The remaining possibility is that $j \neq 1$. In this case,~\Cref{sig1sig2difonF2} completes the proof.

\textbf{Case 2. $\mathbf{ \mathcal X \not\succeq \mathcal B_3.}$} So each block of $\mathcal X$ meets each block of $\mathcal B_3$ in at most one block of $\mathcal B_2$. If the blocks of $\mathcal X$ have cardinality $2p_1p_2$ then fix $\ell$ such that $x\tau_1\rho_3^\ell$ is in the same block of $\mathcal X$ as $x$; otherwise, take $\ell=0$. 

Define $\beta_1$ to fix every point of $B_{2,x}$ and $B_{2,x} \tau_1\rho_3^\ell$. Any other point $z \in \Omega$ has a unique representation as $y\sigma_3^a$ for some $1 \le a \le p_3-1$ and some $y\in B_{2,x}\cup B_{2,x} \tau_1\rho_3^\ell$. Using this representation, define $z\beta=_1z\sigma_3^{-a}\rho_3^a$. 

Let $u, v \in \Omega$. If $v \in X_u$ then there is some $a$ such that $(u,v)\beta_1=(u,v)\sigma_3^{-a}\rho_3^a$. If $v \notin X_u$ then $B_{1,v}$ lies in an orbit of $G_u$. Observe that since $\sigma_3\rho_3^{-1}$ fixes every block of $\mathcal B_1$, so does $\beta_1$. Thus~\Cref{in-2-closure} gives us some $h \in G$ such that $(u,v)\beta_1=(u,v)h$. So $\beta_1\in G^{(2)}$. 

Since $\mathcal X \succeq \mathcal B_2$, there is some $j$ such that for every $y \in B_{2,x}\tau_1\rho_3^\ell$, $y\sigma_1=y\rho_1^j$. Also, for every $y \in B_{2,x}$ we have $y\sigma_1=y\rho_1$. Take any $z \in \Omega$, and let $a$ be such that $z=y'\sigma_3^a$ for some $y'\in B_{2,x}\cup B-{2,x}\tau_1\rho_3^{\ell}$. Note that there is some $y \in B_{2,x}\cup B-{2,x}\tau_1\rho_3^{\ell}$ such that $z=y\rho_3^a$. Now
$$z\sigma_1^{\beta_1}=z\beta_1^{-1}\sigma_1\beta_1=z\rho_3^{-a}\sigma_3^a\sigma_1\sigma_3^{-a}\rho_3^a=y\sigma_1\rho_3^a$$
and this is either $y\rho_1\rho_3^a$ (if $y \in F_1$), or $y\rho_1^j\rho_3^a$ (if $y \in F_2$), which is either $z\rho_1$ (if $z \in F_1$), or $z\rho_1^j$ (if $z \in F_2$).

The same calculations with $\sigma_2$ show that $\sigma_2^{\beta_1}=\sigma_2$, which is $\rho_2$ on $F_1$ and $\rho_2^i$ on $F_2$. Meanwhile,~\Cref{conj-works-2} shows that $\sigma_3^{\beta_1}=\rho_3$ everywhere. We can now finish the proof as before: if $j=1$ then $\langle C_r, C_r^{\pi\beta_1}\rangle$ is block-regular on $\mathcal C$, so by~\Cref{cyclic-enough}, $\langle R_r, R_r^{\pi\beta_1}\rangle$ is also, and~\Cref{cor-reg-B1} produces a $\beta_2$ that completes the proof. The remaining possibility is that $j \neq 1$. In this case,~\Cref{sig1sig2difonF2}  produces a $\beta_2$ that completes the proof.
\end{proof}

Our first couple of results in this section effectively showed that we may assume that $\mathcal X \succeq \mathcal B_2$, and the same for $\mathcal Y$ when it exists. The remaining steps in our proof largely amount to considering various possibilities for what $\mathcal X$ and $\mathcal Y$ can be. We start by dealing with several possibilities involving the blocks of each being as small as possible: each has blocks of cardinality $p_1p_2$ or $2p_1p_2$, and if both have blocks of cardinality $2p_1p_2$ then the partitioms are equal.

\begin{lem}\label{X-Ycoincide}
Use~\Cref{notn-2} with $s=3$, and~\Cref{notn-blocks}. Suppose that $G$ is block-regular on $\mathcal B_2$ and there is a $G$-invariant partition with blocks of cardinality $p_2$, so that $\mathcal Y$ is defined. Suppose that $\mathcal X, \mathcal Y \succeq \mathcal B_2$ and there is some $G$-invariant partition $\mathcal D$ with blocks of cardinality $2p_1p_2$ such that $\mathcal X,\mathcal Y \preceq \mathcal D$. Then there is some $\beta \in G^{(2)}$ such that $R_r^{\pi\beta}=R_r$.
\end{lem}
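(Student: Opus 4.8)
My plan is to reduce the statement to one of the configurations already disposed of: \Cref{cor-reg-B1} (when $G$ can be made block-regular on $\mathcal B_1$), \Cref{use-cyclic} (when $F_2$ is an orbit of $G_x$), \Cref{sig1sig2difonF2} (the explicit normal form on $F_1$ and $F_2$), or a direct application of \Cref{cyclic-enough}. The hypothesis $\mathcal X,\mathcal Y\preceq\mathcal D$ is the crucial smallness input: since a block of $\mathcal D$ is the union of one block of $\mathcal B_2$ in $F_1$ and one in $F_2$, every block of $\mathcal X$ (and of $\mathcal Y$) meets each of $F_1$ and $F_2$ in at most one block of $\mathcal B_2$, and $\mathcal D$ is the only bridge joining the two halves of $\Omega$.

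First I would record the structural consequences. Because $\mathcal X\succeq\mathcal B_2$, \Cref{2-regular-fix-sig-1-part-1} shows that the orbits of $\langle\rho_2\rangle$ form a $G$-invariant partition (consistent with $\mathcal Y$ being defined), and \Cref{sigmas-on-X}(\ref{X-constant}) gives a constant power with $\sigma_1=\rho_1^{k_X}$ on each block $X$ of $\mathcal X$, whence in particular $\sigma_1$ is a fixed power of $\rho_1$ on each block of $\mathcal B_2$; \Cref{commuting-refined} then yields that $\sigma_2$ commutes with $\rho_1$. Interchanging the roles of $p_1$ and $p_2$ by \Cref{reordering} (legitimate since the orbits of $\langle\rho_2\rangle$ refine $\mathcal B_2$) and repeating the argument from $\mathcal Y\succeq\mathcal B_2$, I get symmetrically that the orbits of $\langle\rho_1\rangle$ are $G$-invariant and that $\sigma_2$ is a fixed power of $\rho_2$ on each block of $\mathcal B_2$, with $\sigma_1$ commuting with $\rho_2$. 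Finally, block-regularity on $\mathcal B_2$ together with the \Cref{notn-2} requirement that the block of $\mathcal B_2$ containing $x\sigma_3$ equals the one containing $x\rho_3$ forces $\sigma_3\rho_3^{-1}$ to fix every block of $\mathcal B_2$ setwise. If $F_2$ is an orbit of $G_x$ I finish at once by \Cref{use-cyclic}, so from here on I assume it is not.

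With $\sigma_1=\rho_1^{k_X}$ on each block $X$ of $\mathcal X$ and each such $X$ meeting $F_1$ (and $F_2$) in a single block of $\mathcal B_2$, the plan is to straighten $R_r^\pi$ one generator at a time. The tool is the conjugation of \Cref{conj-works-2}: along each $\sigma_1$-orbit I would let $\beta$ act as $\sigma_1^{-j}\rho_1^{j}$, which maps the $\sigma_1$-orbit onto the corresponding $\rho_1$-orbit and conjugates $\sigma_1$ to $\rho_1$, while leaving $\sigma_2$ and $\sigma_3$ undisturbed modulo blocks; membership $\beta\in G^{(2)}$ is to be verified with \Cref{in-2-closure}, using $\mathcal B_1$ as the bridging partition for pairs in distinct blocks of $\mathcal X$ (for which the defining property of $\equiv_{\mathcal B_1}$ places the second block inside a single orbit of the first point's stabiliser). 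I would then repeat the construction for $\sigma_2$ (via \Cref{reordering}) so as to arrange $\sigma_1=\rho_1$ and $\sigma_2=\rho_2$ on $F_1$, transporting the data to $F_2$ through the reflections $\tau_1,\tau_2$ and the bridge $\mathcal D$. After this, the residual discrepancy is carried by $\sigma_3$ (only twisted inside blocks of $\mathcal B_2$) and by the powers of $\sigma_1,\sigma_2$ on $F_2$. If those residual powers are trivial then $\langle C_r,C_r^\pi\rangle$ is block-regular on $\mathcal B_1$ and \Cref{cor-reg-B1} finishes; if they equal fixed constants $j,i\neq1$ with $\sigma_3=\rho_3$ on $F_2$, then \Cref{sig1sig2difonF2} finishes; and in the tightly coupled case $\mathcal X=\mathcal Y=\mathcal D$ the two halves are forced to carry identical data, giving $C_r^{\pi\beta}=C_r$ and then \Cref{cyclic-enough}.

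The main obstacle is the verification that the straightening conjugation genuinely lies in $G^{(2)}$ and that the $F_2$-powers can be made uniform. The clean argument of \Cref{reg-on-B1} is unavailable here: there one has block-regularity on $\mathcal B_1$, so the per-block conjugator is a single element of $G$ fixing every block of $\mathcal B_1$; here we have block-regularity only on $\mathcal B_2$, so a conjugator fixing a representative point need only fix every block of $\mathcal B_2$ and may permute the blocks of $\mathcal B_1$ inside them. Consequently, for two points lying in the same block of $\mathcal X$ but in different blocks of $\mathcal B_1$, neither a single group element nor the $\mathcal B_1$-bridge of \Cref{in-2-closure} is immediately available, and this is exactly the gap that the hypotheses $\mathcal X,\mathcal Y\preceq\mathcal D$ must close: they confine each block of $\mathcal X$ to one block of $\mathcal B_2$ per side, and the coincidence $\mathcal X=\mathcal Y$ in the size-$2p_1p_2$ case pins the $F_2$-powers of $\sigma_1$ and $\sigma_2$ to the single constants needed to invoke \Cref{sig1sig2difonF2}. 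Making this reconciliation precise, and checking that $\sigma_3$ stays equal to $\rho_3$ on blocks throughout the successive conjugations, is where the real work lies.
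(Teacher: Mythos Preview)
Your plan attacks $\sigma_1$ and $\sigma_2$ first and leaves $\sigma_3$ for last; the paper does the opposite, and this reversal is precisely what dissolves the obstacle you flag in your final paragraph. The paper's $\beta$ is defined globally by a single recipe indexed by the blocks of $\mathcal D$: on $D_x\rho_3^k$ it acts as $\sigma_3^{-k}\rho_3^k$. The hypothesis $\mathcal X,\mathcal Y\preceq\mathcal D$ is then used, not to pin down per-block powers of $\sigma_1,\sigma_2$, but to verify $\beta\in G^{(2)}$ via \Cref{in-2-closure} with $\mathcal D$ replaced by $\mathcal B_2$: if $v\notin D_u$ then $v\notin X_u$ and $v\notin Y_u$, so both $B_{1,v}$ and $C_v$ lie in orbits of $G_u$, and since $v$ is in each, $B_{2,v}$ itself lies in an orbit of $G_u$. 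Thus the pair $(u,v)$ is handled at the level of $\mathcal B_2$, and the difficulty you identify (that block-regularity is only available on $\mathcal B_2$, not $\mathcal B_1$) never arises.

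Once $\sigma_3^\beta=\rho_3$ everywhere (by \Cref{conj-works-2}), the constants for $\sigma_1^\beta,\sigma_2^\beta$ are forced to be uniform on each of $F_1,F_2$ simply because $\rho_3$ commutes with them; no separate straightening of $\sigma_1,\sigma_2$ is needed. The orbits of $\langle\rho_1,\rho_3\rangle$ are then $\langle R_r,R_r^{\pi\beta}\rangle$-invariant, and the paper finishes with \Cref{p1p3-blocks} rather than \Cref{sig1sig2difonF2}. Your route may be salvageable, but as written it is a sketch with an acknowledged gap at exactly the point the paper's choice of ordering sidesteps.
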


\begin{proof}
For $0 \le k \le p_3-1$, for $z \in D_x\rho_3^k$ define $z \beta=z \sigma_3^{-k}\rho_3^k$. 

Let $u, v \in \Omega$. If $v \in D_u$ then there is some $k$ such that $(u,v)\beta=(u,v)\sigma_3^{-k}\rho_3^k$. If $v \notin D_u$ then $v \notin X_u$ and $v \notin Y_u$, so both $C_v$ and $B_{1,v}$ lie in an orbit of $G_u$ (the same orbit since $v$ is in both). It is not hard to see that this forces $B_{2,v}$ to lie in this orbit of $G_u$.
Since $G$ is block-regular on $\mathcal B_2$ and by~\Cref{notn-2} $B_{2,x}\sigma_3=B_{2,x}\rho_3$, it is straightforward to deduce that $\beta$ fixes every block of $\mathcal B_2$ setwise. Using~\Cref{in-2-closure}, this gives $h \in G$ such that $(u,v)\beta=(u,v)h$. We conclude $\beta \in G^{(2)}$. 

Note that since $\beta$ acts as $\sigma_3^{-i}\rho_3^i$ for every element of $D_x\rho_3^i$, the conditions of~\Cref{conj-works-2} are satisfied for every $y \in D_x$. We conclude that $\sigma_3^\beta=\rho_3$ everywhere.

Since $\mathcal X, \mathcal Y \succeq \mathcal B_2$, using~\Cref{sigmas-on-X}(\ref{X-constant}) we conclude that for each $B\in \mathcal B_2$, there exist constants $i_{B}, j_{B}$ such that everywhere on $B$ we have $\sigma_1=\rho_1^{i_{B}}$ and $\sigma_2=\rho_2^{j_{B}}$. In particular, we have $i_{B_{2,x}}=j_{B_{2,x}}=1$, and this is also true for $\sigma_1^\beta$ and $\sigma_2^\beta$. Since $\sigma_3^\beta=\rho_3$ and $\sigma_3^\beta$ commutes with both $\sigma_1^\beta$ and $\sigma_2^\beta$, this forces $\sigma_1^\beta=\rho_1$ and $\sigma_2^\beta = \rho_2$ everywhere on $F_1$. Furthermore, on $F_2$ there is a single pair of constants $i$ and $j$ such that $\sigma_1^\beta=\rho_1^{i}$ and $\sigma_2^\beta=\rho_2^{j}$ everywhere on $F_2$.

The orbits of $\rho_1$ and $\sigma_1^\beta$ coincide, as do the orbits of $\rho_3$ and $\sigma_3^\beta$, so by~\Cref{orbits-blocks}, the orbits of $\langle \rho_1,\rho_3\rangle$
are invariant under $\langle R_r, R_r^{\pi\beta}\rangle$. Now~\Cref{p1p3-blocks} gives us a $\beta' \in G^{(2)}$ such that $R_r=R_r^{\pi\beta\beta'}$, completing the proof. 
\end{proof}

Our next result deals with the next-smallest possibility: the cardinalities of the blocks of both $\mathcal X$ and $\mathcal Y$ is $2p_1p_2$, but the partitions need not be equal. For this result, we make an additional assumption that either $\rho_1$ or $\rho_2$ commutes with every element of $C_r^\pi$. We will set aside for later the possibility that this hypothesis does not hold.

\begin{lem}\label{Ks-and-Ls-part0}
Use~\Cref{notn-2} with $s=3$, and~\Cref{notn-blocks}. Suppose that $G$ is block-regular on $\mathcal B_2$, that the orbits of $\langle \rho_2\rangle$ form a $G$-invariant partition $\mathcal C$, and that either $\rho_1$ or $\rho_2$ is central in $\langle C_r, C_r^\pi\rangle$. Further suppose that the cardinality of the blocks of $\mathcal X$ and of $\mathcal Y$ is $2p_1p_2$.

Then there is some $\beta \in G^{(2)}$ such that $R_r^{\pi\beta}=R_r$.
\end{lem}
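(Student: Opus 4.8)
The plan is to reduce to \Cref{p1p3-blocks} by arranging that the orbits of $\langle\rho_2,\rho_3\rangle$ (or of $\langle\rho_1,\rho_3\rangle$) become $G$-invariant after conjugating $R_r^\pi$ by a suitable element of $G^{(2)}$. Throughout I would assume, as permitted by \Cref{2-regular-fix-sig-2-part-1} and its analogue for $\mathcal Y$, that $\mathcal X,\mathcal Y\succeq\mathcal B_2$. Since $G$ is block-regular on $\mathcal B_2$ and $x\sigma_3,x\rho_3$ lie in a common block of $\mathcal B_2$ by \Cref{notn-2}, the element $\sigma_3\rho_3^{-1}$ fixes every block of $\mathcal B_2$. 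Because the blocks of $\mathcal X$ and of $\mathcal Y$ have cardinality $2p_1p_2$ while $F_1,F_2$ have odd cardinality $p_1p_2p_3$, each block of $\mathcal X$ (and of $\mathcal Y$) meets $F_1$ in a single block of $\mathcal B_2$ and $F_2$ in a single block of $\mathcal B_2$, and the $p_3$ blocks of $\mathcal X$ are exactly the $\rho_3$-translates $X_x\rho_3^k$. I would treat the case that $\rho_2$ is central in $\langle C_r,C_r^\pi\rangle$; the case of $\rho_1$ is entirely analogous, with $\mathcal B_1$ in place of $\mathcal C$, $\mathcal X$ in place of $\mathcal Y$, and $\langle\rho_1,\rho_3\rangle$ in place of $\langle\rho_2,\rho_3\rangle$.

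The first substantive step uses centrality to pin down $\sigma_2$. By \Cref{sigmas-on-X}(\ref{X-constant}) applied to $\mathcal Y$ there is a constant $k_Y$ with $\sigma_2=\rho_2^{k_Y}$ on each block $Y\in\mathcal Y$, and $k_{Y_x}=1$. Since $\sigma_2$ commutes with $\sigma_3$ and, by centrality, $\rho_2$ commutes with $\sigma_3$, I would push the equality $\sigma_2=\rho_2$ from $B_{2,x}$ through the $\rho_3$-layers: the computation $(x\sigma_3^{m})\sigma_2 = x\sigma_3^{m}\sigma_2 = x\sigma_2\sigma_3^m = x\rho_2\sigma_3^m = x\sigma_3^m\rho_2$ shows $\sigma_2=\rho_2$ on each $B_{2,x}\rho_3^m$, hence on all of $F_1$. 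As each block of $\mathcal Y$ meets $F_1$ in a distinct such block, every $k_Y=1$, and so $\sigma_2=\rho_2$ everywhere.

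Next I would conjugate $\sigma_3$ to $\rho_3$. Define $\beta$ to fix $F_1,F_2$ setwise and to act as $\sigma_3^{-k}\rho_3^{k}$ on $X_x\rho_3^k$; since $\sigma_3\rho_3^{-1}$ fixes every block of $\mathcal B_2$, so does $\beta$. \Cref{conj-works-2} (with $i=3$ and exponent $1$, applied on each $\rho_3$-orbit, each of which meets $X_x$ exactly once) then gives $\sigma_3^\beta=\rho_3$, while $\sigma_2^\beta=\sigma_2=\rho_2$ is preserved. Because $\beta$ fixes every block of $\mathcal B_2$ setwise, $\langle R_r,R_r^{\pi\beta}\rangle$ induces the same regular action on $\mathcal B_2$ as $G$ and so remains block-regular on $\mathcal B_2$; moreover the orbits of $\langle\rho_2,\rho_3\rangle$ now coincide with the orbits of $\langle\sigma_2^\beta,\sigma_3^\beta\rangle$ and hence, by \Cref{orbits-blocks}, are $\langle R_r,R_r^{\pi\beta}\rangle$-invariant. \Cref{p1p3-blocks} then produces the required conjugator, completing the proof.

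The main obstacle is verifying that $\beta\in G^{(2)}$. For $u,v$ in the same block of $\mathcal X$ this is immediate, since $\beta$ acts there as a single element of $G$. When $v\notin X_u$ one wants \Cref{in-2-closure} with $\mathcal D=\mathcal B_2$, which needs $B_{2,v}$ to lie in an orbit of $G_u$; exactly as in \Cref{X-Ycoincide} this follows when in addition $v\notin Y_u$, because then both $B_{1,v}$ and $C_v$, and hence $B_{2,v}$, lie in a common orbit of $G_u$. The genuinely new case, forced on us precisely because $\mathcal X\neq\mathcal Y$, is $v\in Y_u\setminus X_u$, where $\beta$ acts by different group elements at $u$ and $v$ and $C_v$ need not lie in an orbit of $G_u$. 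Handling this requires examining the kernel $N$ of the action $G_{\mathcal B_2}$ and using the partitions $\mathcal K,\mathcal L$ (via \Cref{K-in-X}) to show that the point stabiliser $N_u$ acts transitively on $B_{2,v}$, so that a single $h\in G$ still realises $(u,v)\beta$. This is where the role of $\mathcal K$ and $\mathcal L$ enters, and it is the crux of the argument.
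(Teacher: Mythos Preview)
Your opening observation is correct and worth keeping: from $\rho_2$ central and the fact that each block of $\mathcal Y$ has cardinality $2p_1p_2$ (hence meets $F_1$ in exactly one block of $\mathcal B_2$), the identity $x\sigma_3^m\sigma_2=x\sigma_3^m\rho_2$ forces every $k_Y=1$, so $\sigma_2=\rho_2$ globally. In the paper's symmetric treatment this shows its constant $\ell_1$ is actually $1$, though the paper does not use that.

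The gap is in the case $v\in Y_u\setminus X_u$, and your proposed fix cannot work. If $N_u$ (or any subgroup of $G_u$) were transitive on $B_{2,v}$, then $C_v\subset B_{2,v}$ would lie in an orbit of $G_u$; but $v\in Y_u$ means precisely that $C_v$ does \emph{not} lie in an orbit of $G_u$. So $B_{2,v}$ is never in an orbit of $G_u$ here, and \Cref{in-2-closure} with $\mathcal D=\mathcal B_2$ is unavailable. What \emph{is} available is $\mathcal D=\mathcal B_1$, since $v\notin X_u$ gives $B_{1,v}$ in an orbit of $G_u$; but your $\beta$ acts as $\sigma_3^{-k}\rho_3^{k}$ on $X_u$ and as $\sigma_3^{-k'}\rho_3^{k'}$ on $X_v$ with $k\ne k'$, and there is no reason these agree on $(B_{1,u},B_{1,v})$ via a single element of $G$.

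The paper resolves this not by analysing stabilisers further but by choosing $\beta$ more delicately. On $F_1\cap Y_x\rho_3^{i}=B_{2,x}\rho_3^{i}$ it still uses $\sigma_3^{-i}\rho_3^{i}$, but on the $F_2$-part it multiplies by a carefully chosen power of $\rho_1$ (determined by constants $j_i,k_i$ measuring the $\mathcal C$-displacement of $\sigma_3^{-i}\rho_3^{i}$ on the two halves). The $\rho_1$-factor is invisible on blocks of $\mathcal B_1$, so across a $Y$-block $\beta$ acts on $(B_{1,u},B_{1,v})$ by the single element $\sigma_3^{-i}\rho_3^{i}$, and \Cref{in-2-closure} with $\mathcal D=\mathcal B_1$ applies. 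Across an $X$-block the same $\rho_1$-correction makes $\beta$ act by a single element on $(C_u,C_v)$, and \Cref{in-2-closure} with $\mathcal D=\mathcal C$ applies. The verification that $\sigma_3^\beta=\rho_3$ on $F_2$ then reduces to the telescoping identity $k_{i+1}-k_i=j_{i+1}-j_i$. In short, the crux is not a stabiliser computation but the design of $\beta$ so that it simultaneously respects the $\mathcal B_1$-structure along $\mathcal Y$ and the $\mathcal C$-structure along $\mathcal X$.
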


\begin{proof}
Without loss of generality, since we can use~\Cref{reordering} to exchange $p_1$ with $p_2$, we may assume that $\rho_1$ is central in $\langle C_r, C_r^\pi\rangle$.

Let $\tau \in R_r$ be such that $\tau$ fixes $Y_x$, and let $\tau' \in R_r$ be such that $\tau'$ fixes $X_x$. Note we can choose $\tau'$ so that $\tau'=\tau\rho_3^\ell$ for some $\ell$. Then 
$$\mathcal Y =\{B_{2,x}\rho_3^i\cup B_{2,x}\tau\rho_3^i: 0 \le i \le p_3-1\}$$
$$\text{and }\mathcal X =\{B_{2,x}\rho_3^i\cup B_{2,x}\tau'\rho_3^i: 0 \le i \le p_3-1\}=\{B_{2,x}\rho_3^i\cup B_{2,x}\tau\rho_3^{i+\ell}: 0 \le i \le p_3-1\}.$$ If $\ell=0$ then~\Cref{X-Ycoincide} completes the proof, so we may assume $1 \le \ell \le p_3-1$.

Define $\beta$ as follows. If $y \in B_{2,x}\rho_3^i$ then $y\beta=y\sigma_3^{-i}\rho_3^i$. If $z \in F_2 \cap X_y$ so that $z \in B_{2,x}\tau\rho_3^{i+\ell}$, then define $j_i$ so that $C_y\beta\rho_1^{j_i}=C_y$ and $k_i$ so that $C_z\sigma_3^{-i-\ell}\rho_3^{i+\ell}\rho_1^{k_i}=C_z$. This can be done since $\rho_1$ and $\sigma_3$ commute. Now $z\beta=z\sigma_3^{-i-\ell}\rho_3^{i+\ell}\rho_1^{k_i-j_i}$.

We claim first that $\beta \in G^{(2)}$. If $v \in B_{2,u}$ then there exist $i, a$ such that $(u,v)\beta=(u,v)\sigma_3^{-i}\rho_3^i\rho_1^a$. 

If $v \notin X_u$ and $v \notin Y_u$ then both $B_{1,v}$ and $C_v$ lie in an orbit of $G_u$; since $v$ is in both of these, it is not hard to see that $B_{2,v}$ lies in this orbit of $G_u$. Note that $\beta$ has been defined to fix each block of $\mathcal B_2$. By~\Cref{in-2-closure}, there is some $h \in G$ such that $(u,v)\beta=(u,v)h$. 

If $v \in Y_u$ and $v \notin X_u$ then without loss of generality $u \in B_{2,x}\rho_3^i$ and $v \in B_{2,x}\tau\rho_3^i$ for some $i$. Now by definition of $\beta$, we have $u \beta=u\sigma_3^{-i}\rho_3^i$ and $B_{1,v}\beta=B_{1,v}\sigma_3^{-i}\rho_3^i$ (since the action of $\rho_1$ fixes every block of $\mathcal B_1$). Since $v \notin X_u$ we have $B_{1,v}$ lies in an orbit of $G_{u}$, so by~\Cref{in-2-closure} there is some $h \in G$ such that $(u,v)\beta=(u,v)h$.

Finally, if $v \in X_u$ and $v \notin Y_u$ then without loss of generality $u \in B_{2,x}\rho_3^i$ and $v \in B_{2,x}\tau\rho_3^{i+\ell}$ for some $i$. Now by definition of $\beta$, we have $C_u \beta=C_u\rho_1^{-j_i}$ and $C_v\sigma_3^{-i-\ell}\rho_3^{i+\ell}=C_v\rho_1^{-k_i}$, so $C_v\beta=C_v\rho_1^{-j_i}$. Since $v \notin Y_u$ we have $C_{v}$ lies in an orbit of $G_{u}$, so by ~\Cref{in-2-closure} there is some $h \in G$ such that $(u,v)\beta=(u,v)h$. We conclude that $\beta \in G^{(2)}$. 

Now we show that for $i \in \{1,3\}$, $\sigma_i^{\beta}=\rho_i$ on $F_1$, and on $F_2$ there is some $\ell_i$ such that $\sigma_i^\beta=\rho_i^{\ell_i}$, with $\ell_3=1$.

Since $\rho_1$ is central in $\langle C_r, C_r^{\pi}\rangle$ (and in particular commutes with $\sigma_3$), we have $\sigma_1=\rho_1$ on $F_1$ and $\sigma_1=\rho_1^{\ell_1}$ on $F_2$. For $y \in F_1$ then, $$y \beta^{-1}\sigma_1\beta=y\rho_3^{-i}\sigma_3^i\sigma_1\sigma_3 ^{-i}\rho_3^i=y\rho_3^{-i}\sigma_1\rho_3^i=y\rho_3^{-i}\rho_1\rho_3^i=y\rho_1.$$ Likewise, for $z \in F_2$, $$z \beta^{-1}\sigma_1\beta=z\rho_1^{j_i-k_i}\rho_3^{-i-\ell}\sigma_3^{i+\ell}\sigma_1\sigma_3 ^{-i-\ell}\rho_3^{i+\ell}\rho_1^{k_i-j_i}=y\rho_3^{-i-\ell}\sigma_1\rho_3^{i+\ell}=y\rho_3^{-i-\ell}\rho_1^{\ell_1}\rho_3^{i+\ell}=y\rho_1^{\ell_1}.$$ 

Also, for $y \in F_1$ with $y \in B_{2,x}\rho_3^i$, we have $$y\beta^{-1}\sigma_3\beta=y\rho_3^{-i}\sigma_3^i\sigma_3\sigma_3^{-i-1}\rho_3^{i+1}=y\rho_3^{-i}\rho_3^{i+1}=y\rho_3.$$ And for $z \in F_2$ with $z \in B_{2,x}\tau\rho_3^{i+\ell}$, we have $$z\beta^{-1}\sigma_3\beta=z\rho_1^{j_i-k_i}\rho_3^{-i-\ell}\sigma_3^{i+\ell}\sigma_3\sigma_3^{-i-\ell-1}\rho_3^{i+\ell+1}\rho_1^{k_{i+1}-j_{i+1}}$$ $$=z\rho_3^{-i-\ell}\rho_3^{i+\ell+1}\rho_1^{k_{i+1}-k_i-j_{i+1}+j_i}=z\rho_3\rho_1^{k_{i+1}-k_i-j_{i+1}+j_i}.$$ We now explain why $k_{i+1}-k_i-j_{i+1}+j_i=0$. For $1 \le i \le p_3$, define $a_i$ to be the value such that for $y \in B_{2,x}\rho_3^i$ we have $C_y\sigma_3^{-1}\rho_3=C_y\rho_1^{a_i}$. Notice that $j_i=\sum_{b=1}^i a_b$. Furthermore, when $z \in X_y$ so that $z \in B_{2,x}\tau\rho_3^{i+\ell}$ we must have $C_z\sigma_3^{-1}\rho_3=C_z\rho_1^{a_i}$, and therefore $k_i=\sum_{b=1-\ell}^{i}a_b$, where subscripts are calculated modulo $p_3$. Thus, $$k_{i+1}-k_i-j_{i+1}+j_i=a_{i+1}-a_{i+1}=0.$$ So $z\sigma_3^\beta=z\rho_3$.

The orbits of $\rho_1$ and $\sigma_1^\beta$ coincide, as do the orbits of $\rho_3$ and $\sigma_3^\beta$, so by~\Cref{orbits-blocks}, the orbits of $\langle \rho_1,\rho_3\rangle$
are invariant under $\langle R_r, R_r^{\pi\beta}\rangle$. Now~\Cref{p1p3-blocks} gives us a $\beta' \in G^{(2)}$ such that $R_r=R_r^{\pi\beta\beta'}$, completing the proof. 
\end{proof}

We now switch to considering the other end of things, where the blocks of $\mathcal X$ and $\mathcal Y$ are as large as possible. Our next result shows that if the orbits of $G_x$ include each block of $\mathcal B_2$ in $F_1$ other than $B_{2,x}$, then it is not possible for both $\mathcal X$ and $\mathcal Y$ to consist of a single block.

\begin{lem}\label{Y<Omega}
Use~\Cref{notn-2} with $s=3$, and~\Cref{notn-blocks}. Suppose that $G$ is block-regular on $\mathcal B_2$, that the orbits of $\langle \rho_2\rangle$ form a $G$-invariant partition $\mathcal C$, and that $\mathcal X, \mathcal Y \succeq \mathcal B_2$. Further suppose that every block of $\mathcal B_2$ in $F_1$ other than $B_{2,x}$ lies in an orbit of $G_x$.

If $p_2>p_1$ then $\mathcal Y \prec \{\Omega\}$; likewise, if $p_1>p_2$ then $\mathcal X \prec \{\Omega\}$.
\end{lem}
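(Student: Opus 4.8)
The plan is to argue by contradiction, and to treat only the case $p_2>p_1$; the case $p_1>p_2$ is identical after exchanging the roles of $\rho_1$ and $\rho_2$ by~\Cref{reordering} (which exchanges $\mathcal X$ and $\mathcal Y$). So suppose $p_2>p_1$ and $\mathcal Y=\{\Omega\}$. First I would record the consequences of block-regularity. Since $G$ is block-regular on $\mathcal B_2$, every point stabiliser $G_y$ fixes $B_{2,y}$, hence fixes every block of $\mathcal B_2$ setwise. Because $C_r$ is transitive on $F_1$, the hypothesis that every block of $\mathcal B_2$ in $F_1$ other than $B_{2,x}$ lies in an orbit of $G_x$ holds with $x$ replaced by any point of $F_1$: for each $y\in F_1$, $G_y$ is transitive on every block of $\mathcal B_2$ in $F_1$ except $B_{2,y}$. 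Consequently, if $y_i\in B_{2,x}$, $y_{i+1}\in F_1$, and $C_{y_{i+1}}$ is not contained in a single orbit of $G_{y_i}$, then $y_{i+1}\in B_{2,x}$; so any $\equiv_{\mathcal C}$-chain starting at $x$ stays inside $B_{2,x}$ for as long as it remains in $F_1$. As $\mathcal Y=\{\Omega\}$, some chain must leave $F_1$, and its first crossing is a step from a point $y^*\in B_{2,x}$ into $F_2$; applying~\Cref{Gy-affine} to the size-$p_2$ column $C\subseteq F_2$ that is entered (which $G_{y^*}$ preserves but does not cover by one orbit), $G_{y^*}$ must fix some point $w\in F_2$.

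Next I would promote this to an equality of stabilisers. From $G_{y^*}\le G_w$ and $|G_{y^*}|=|G_w|$ we get $G_{y^*}=G_w$, and any $g\in G$ with $y^*g=w$ normalises this group while interchanging $F_1$ and $F_2$, hence interchanging $B_{2,y^*}$ and $B_{2,w}$. By the uniform statement above $G_{y^*}$ is transitive on every block of $\mathcal B_2$ in $F_1$ except $B_{2,y^*}$, and conjugating these transitive actions by $g$ shows $G_{y^*}=G_w$ is transitive on every block of $\mathcal B_2$ in $F_2$ except $B_{2,w}$. Thus $G_{y^*}$ is transitive on all $2p_3-2$ blocks of $\mathcal B_2$ other than $B_{2,y^*}$ and $B_{2,w}$.

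The crux, and the point at which $p_2>p_1$ enters, is an analysis of the $p_2$-elements of $G$. Any such element has trivial image in $G_{\mathcal B_2}$ (as $\gcd(p_2,2p_3)=1$), so fixes every block of $\mathcal B_2$ setwise; on each block it induces a permutation of the $p_1$ columns which is a $p_2$-element of a symmetric group on $p_1$ points, and since $p_2>p_1$ this is trivial. Hence every $p_2$-element fixes every column setwise, and as it also preserves $\mathcal B_1$ it acts on each block as a single cyclic shift of the $p_2$ rows shared by all columns. These elements therefore form a normal elementary abelian Sylow $p_2$-subgroup $M_2\trianglelefteq G$, which I would view as an $\mathbb{F}_{p_2}$-space of ``shift vectors'' indexed by the $2p_3$ blocks. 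Transitivity of $G_{y^*}$ on a block of size $p_1p_2$ forces $p_2\mid|G_{y^*}|$, so $S:=M_2\cap G_{y^*}$ is nontrivial; since a nonzero shift fixes no row while $S$ fixes $y^*$ and $w$, the group $S$ fixes $B_{2,y^*}$ and $B_{2,w}$ pointwise, and on every other block it must act as a nontrivial shift in order that $G_{y^*}$ be transitive there. In particular $\mathrm{Fix}(S)=B_{2,y^*}\cup B_{2,w}$.

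Finally I would derive a contradiction from this configuration, and this is the step I expect to be the main obstacle. The plan is to feed the rigid Sylow structure back into the requirement that $\mathcal Y=\{\Omega\}$ connect all $2p_3$ blocks: since within a half chains are confined to a single block, reaching $B^{(1)}\subseteq F_1$ forces a later re-entry step from some reachable $v\in F_2$ whose stabiliser fixes a point of $B^{(1)}$, whereas every stabiliser arising from such an escape is, by the argument above, transitive on $B^{(1)}$. Making this precise amounts to showing that the subspaces $M_2\cap\{v_{B_{2,y^*}}=v_{B_{2,w}}=0\}$ forced by successive escapes cannot simultaneously have full support away from their two fixed blocks while being compatible with the $p_1$-elements that must supply column-transitivity; tracking this inside the $\mathbb{F}_{p_2}$-module $M_2$ (on which $p_2>p_1$ must be used a second time to keep the $p_2$-side rigid) is the delicate part, and it yields the contradiction that completes the proof that $\mathcal Y\prec\{\Omega\}$.
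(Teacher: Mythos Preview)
Your argument is incomplete: you explicitly identify the final contradiction as ``the step I expect to be the main obstacle'' and give only a plan (track compatibility constraints in the $\mathbb F_{p_2}$-module $M_2$) without carrying it out. The paper's proof avoids this obstacle entirely with a much more direct argument that you have not found.

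The missing idea is to follow the $\equiv_{\mathcal C}$-chain \emph{one more step} and use a single $p_2$-element. After the chain first leaves $B_{2,x}$ for some $u\in F_2$, the paper argues (by reasoning symmetric to that confining the chain to $B_{2,x}$ while it stays in $F_1$) that the next step may be taken to land at some $y\in F_1\setminus B_{2,x}$. Now the hypothesis says $B_{2,y}$, hence $C_y$, lies in an orbit of $G_x$; since $\mathcal C$ is $G$-invariant with blocks of size $p_2$, there is some $g\in G_x$ of order $p_2$ acting transitively on $C_y$. Because $g$ has order $p_2$, every orbit of $g$ has length $1$ or $p_2$, and since each block of $\mathcal B_2$ contains only $p_1<p_2$ blocks of $\mathcal C$, $g$ fixes every block of $\mathcal C$ setwise. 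Apply this to $C_u$: $g\in G_x$ fixes $C_u$ setwise, and $C_u$ does not lie in an orbit of $G_x$ (that was the chain condition at the step $x\to u$), so $g$ must fix $u$. But then $g\in G_u$ is transitive on $C_y$, contradicting the chain condition at the step $u\to y$. No stabiliser equalities, no normal-Sylow description of $M_2$, and no module tracking are needed.

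There is also a smaller unjustified step earlier. When you apply \Cref{Gy-affine} to the column $C\subseteq F_2$, you assert that $G_{y^*}$ preserves $C$ setwise; block-regularity on $\mathcal B_2$ only tells you that $G_{y^*}$ fixes the ambient block of $\mathcal B_2$, and it may genuinely permute the $p_1$ columns inside it. The chain condition that $C$ is not contained in any $G_{y^*}$-orbit does not by itself produce a fixed point (for instance, the $G_{y^*}$-orbits inside that $\mathcal B_2$-block could be the $p_2$ rows of $\mathcal B_1$). So your route to $G_{y^*}=G_w$ already needs more work before you reach the part you flagged as delicate.
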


\begin{proof}
By~\Cref{reordering} we may exchange $p_1$ with $p_2$ if necessary, so without loss of generality let us assume that $p_2>p_1$ and deduce that $\mathcal Y \prec\{\Omega\}$. Let $z=x\rho_3$. Towards a contradiction, suppose that $\mathcal Y=\{\Omega\}$, so there is an $\equiv_{\mathcal C}$-chain from $x$ to $z$. Since every block of $\mathcal B_2$ in $F_1$ other than $B_{2,x}$ lies in an orbit of $G_x$, the first entry in such a chain that lies outside of $B_{2,x}$ must lie in $F_2$. Suppose that $u$ is this element. So there is some $x' \in B_{2,x}$ such that $C_u$ does not lie in an orbit of $G_{x'}$. 

If $C_u$ were to lie in an orbit of $G_x$ then there must be an element $g \in G_x$ of order $p_2$ that acts transitively on $C_u$ and therefore on the blocks of $\mathcal B_1$ in $B_{2,u}$. Since $g$ has order $p_2$, every orbit of $g$ has length $1$ or $p_2$. In particular, since there are $p_1$ blocks of $\mathcal C$ in $B_{2,u}$ and $p_2>p_1$, $g$ must fix each block of $\mathcal C$ in $B_{2,u}$. Since $g$ acts transitively on the blocks of $\mathcal B_1$ in $B_{2,u}$ and fixes each block of $\mathcal C$ in $B_{2,u}$ setwise, it must act transitively on each block of $\mathcal C$ in $B_{2,u}$. Conjugating by an appropriate element of $R_r$, we conclude that $C_u$ lies in an orbit of $G_{x'}$, a contradiction. So $C_u$ does not lie in an orbit of $G_x$ and we may as well assume that $u$ immediately follows $x$ in our chain.

By the same logic, the next entry in this chain, say $y$, must lie in $F_1$. Furthermore, we may as well assume that $y \notin B_{2,x}$, or by the logic of the preceding paragraph we could skip $u$ and $y$ in the chain and proceed immediately from $x$ to the next entry. Now by hypothesis, $C_y$ lies in an orbit of $G_x$. Therefore there is an element $g \in G_x$ of order $p_2$ that acts transitively on $C_y$. As before, since $g$ has order $p_2$ each of its orbits has length $1$ or $p_2$. Since $G$ is block-regular on $\mathcal B_2$, $g$ fixes each block of $\mathcal B_2$ setwise, so since there are $p_1<p_2$ blocks of $\mathcal C$ in any block of $\mathcal B_2$, each block of $\mathcal C$ must be fixed setwise by $g$. Consider the action of $g$ on $C_u$. Since $g \in G_x$ and $u$ immediately follows $x$ in our $\equiv_{\mathcal C}$-chain, $C_u$ cannot lie in an orbit of $G_x$, so it must be the case that $ug=u$. But then $g \in G_u$ so that $C_y$ lies in an orbit of $G_u$. This contradicts our choice of $y$ to immediately follow $u$ in our $\equiv_{\mathcal C}$-chain. 

This shows that it is not possible to form an $\equiv_{\mathcal C}$-chain from $x$ to $z$, so $\mathcal Y \neq \{\Omega\}$.
\end{proof}

Our next lemma is quite specific and technical but covers a case we will need in the following result.

\begin{lem}\label{Ks-and-Ls-part1}
Use~\Cref{notn-2} with $s=3$, and~\Cref{notn-blocks}. Suppose that $G$ is block-regular on $\mathcal B_2$, that the orbits of $\langle \rho_2\rangle$ form a $G$-invariant partition $\mathcal C$, that $\mathcal X, \mathcal Y \succeq \mathcal B_2$, and that either $\rho_1$ or $\rho_2$ is central in $\langle C_r, C_r^\pi\rangle$. Further suppose that the cardinality of blocks of $\mathcal K$ is $2p_1$ or $2p_1p_2$, the cardinality of blocks of $\mathcal L$ is $2p_2$ or $2p_1p_2$, if $K \in \mathcal K$ and $L \in \mathcal L$ then $|K\cap L|$ is not even, and if $v \in F_2 \cap K_x$ and $w \in F_2 \cap L_x$ then every block of $\mathcal B_2$ other than $B_{2,x}$, $B_{2,v}$, and $B_{2,w}$ lies in an orbit of $G_x$.

Then there is some $\beta \in G^{(2)}$ such that $R_r^{\pi\beta}=R_r$.
\end{lem}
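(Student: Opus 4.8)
The plan is to follow the template of \Cref{Ks-and-Ls-part0}: construct a $\beta \in G^{(2)}$ fixing $F_1$ and $F_2$ setwise so that $\sigma_3^\beta=\rho_3$ everywhere and the orbits of $\langle\rho_1,\rho_3\rangle$ become invariant under $\langle R_r,R_r^{\pi\beta}\rangle$, and then invoke \Cref{p1p3-blocks}. Since the hypotheses on $\mathcal K$ and $\mathcal L$ (and on $p_1,p_2$) are interchanged by swapping $p_1$ with $p_2$, and since the orbits of both $\langle\rho_1\rangle=\mathcal B_1$ and $\langle\rho_2\rangle=\mathcal C$ are $G$-invariant, I would first use \Cref{reordering} to assume without loss of generality that $\rho_1$ is central in $\langle C_r,C_r^\pi\rangle$. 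Because $\mathcal X,\mathcal Y\succeq\mathcal B_2$, \Cref{sigmas-on-X}(\ref{X-constant}) gives, on each block $B$ of $\mathcal B_2$, constants $i_B,j_B$ with $\sigma_1=\rho_1^{i_B}$ and $\sigma_2=\rho_2^{j_B}$ on $B$, normalised so that $i_{B_{2,x}}=j_{B_{2,x}}=1$. The hypothesis naming the exceptional blocks $B_{2,x}$, $B_{2,v}$, $B_{2,w}$ (with $v\in F_2\cap K_x$ and $w\in F_2\cap L_x$) says precisely that the setwise stabiliser in $G_x$ is transitive on every other block of $\mathcal B_2$; combined with \Cref{K-in-X} (which gives $\mathcal Y\succeq\mathcal K$ and $\mathcal X\succeq\mathcal L$) and the stated cardinalities of $\mathcal K$ and $\mathcal L$, this pins down the coarse geometry of $\mathcal X$ and $\mathcal Y$ that the rest of the argument exploits.

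Next I would build $\beta$ orbit-by-orbit along $\langle\rho_3\rangle$. Fixing pointwise a transversal consisting of the blocks inside $B_{2,x}$ together with the exceptional blocks $B_{2,v}$ and $B_{2,w}$, I would set $\beta$ to act as $\sigma_3^{-k}\rho_3^{k}$ on the $k$-th $\langle\rho_3\rangle$-translate of each such block, inserting a correcting power of the central element $\rho_1$ exactly as in \Cref{Ks-and-Ls-part0} so that every block of $\mathcal C$ is carried to the correct block of $\mathcal C$. The parity hypothesis that $|K\cap L|$ is never even is what I expect to force these correcting exponents to cancel in the relevant telescoping sums (the analogue of the identity $k_{i+1}-k_i-j_{i+1}+j_i=0$ appearing in \Cref{Ks-and-Ls-part0}), thereby guaranteeing both that $\beta$ is well defined and that it fixes every block of $\mathcal B_2$ setwise.

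To check $\beta\in G^{(2)}$ I would apply \Cref{in-2-closure}, splitting into cases according to whether $v\in X_u$, $v\in Y_u$, or neither. When $u$ and $v$ lie in a common block of the relevant partition the required $g\in G$ is read off directly from the definition of $\beta$; when they do not, the exceptional-block hypothesis forces $B_{1,v}$ (respectively $C_v$, respectively $B_{2,v}$) to lie in a single orbit of $G_u$, so \Cref{in-2-closure} supplies the needed $h\in G$. Conjugation then yields $\sigma_3^\beta=\rho_3$ everywhere by \Cref{conj-works-2}, while the computation of $\sigma_1^\beta$ (using that $\rho_1$ is central and commutes with $\sigma_3$) shows it equals $\rho_1$ on $F_1$ and a single fixed power of $\rho_1$ on $F_2$. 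Hence the orbits of $\rho_1$ and $\sigma_1^\beta$ coincide, as do those of $\rho_3$ and $\sigma_3^\beta$, so by \Cref{orbits-blocks} the orbits of $\langle\rho_1,\rho_3\rangle$ are invariant under $\langle R_r,R_r^{\pi\beta}\rangle$, and \Cref{p1p3-blocks} completes the proof.

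The main obstacle will be the bookkeeping in the middle step. Because there are now three exceptional blocks rather than a single partition $\mathcal D$ of cardinality $2p_1p_2$ as in \Cref{Ks-and-Ls-part0}, the correcting $\rho_1$-exponents must be organised around the $\mathcal K$-class through $v$ and the $\mathcal L$-class through $w$ simultaneously, and it is only the non-evenness of $|K\cap L|$ that reconciles these two systems of corrections. Verifying that this cancellation occurs, and confirming that the three fixed blocks do not introduce an inconsistency into the definition of $\beta$ (nor obstruct any of the $G^{(2)}$-membership cases), is where the genuine content of the argument lies.
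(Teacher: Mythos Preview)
Your outline captures the broad shape (build a $\beta$, verify membership in $G^{(2)}$ via \Cref{in-2-closure}, then invoke \Cref{p1p3-blocks}), but it misses the two ideas that actually make the paper's argument work, and your proposed $\beta$ is not well defined as stated.

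First, you skip the reduction. From $\mathcal K,\mathcal L$ having blocks of even cardinality and \Cref{K-in-X}, together with $\mathcal X,\mathcal Y\succeq\mathcal B_2$, the paper deduces that the blocks of both $\mathcal X$ and $\mathcal Y$ have cardinality a multiple of $2p_1p_2$. If both equal $2p_1p_2$ it applies \Cref{Ks-and-Ls-part0}; otherwise one of them is $\{\Omega\}$, say $\mathcal X$, whence $\sigma_1=\rho_1$, and then \Cref{Y<Omega} forces $\mathcal Y$ to have blocks of cardinality exactly $2p_1p_2$. Only after this reduction is there enough structure to build $\beta$.

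Second, your $\beta$ uses \emph{three} base blocks $B_{2,x},B_{2,v},B_{2,w}$ with $v,w\in F_2$, and propagates by $\sigma_3^{-k}\rho_3^{k}$ times a $\rho_1$-correction. But the $\rho_3$-translates of $B_{2,v}$ and of $B_{2,w}$ both sweep out all of $F_2$, so you have two competing definitions of $\beta$ on every block of $\mathcal B_2$ in $F_2$; you note this is ``where the genuine content lies'' but give no mechanism for reconciling it. The paper does \emph{not} attempt this. Instead it works with the single $G$-invariant partition $\mathcal K'=\mathcal Y$ (blocks $B_{2,x}\rho_3^{i}\cup B_{2,x}\tau\rho_3^{i}$) and, crucially, first runs an iteration bouncing between $\mathcal K'$ and $\mathcal L'$ to produce a specific element $g\in G_{C_x}$ that acts as $\rho_1$ on the blocks of $\mathcal C$ inside $B_{2,x}\tau$ (or else finds the $\langle\rho_2,\rho_3\rangle$-orbits already $G$-invariant). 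The map $\beta$ is then defined on $K'_x\rho_3^{i}$ as $\rho_3^{-i}g^{a_i-a_{i-\ell}}\rho_3^{i}\rho_1^{-a_i}$, not as $\sigma_3^{-i}\rho_3^{i}$ with a pure $\rho_1$-correction. The non-evenness of $|K\cap L|$ is used only to ensure $\mathcal K'\ne\mathcal L'$ (so $\ell\ne 0$), which is what makes the iteration visit every block.

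Finally, the paper never shows $\sigma_3^{\beta}=\rho_3$ everywhere, nor aims at $\langle\rho_1,\rho_3\rangle$-invariance. It shows only that $C_y\sigma_3^{\beta}=C_y\rho_3$ for $y\in F_1$, i.e.\ the orbits of $\langle\rho_2,\rho_3\rangle$ in $F_1$ are $\langle C_r,C_r^{\pi\beta}\rangle$-invariant, and then invokes \Cref{blocks-in-F1} (using the $\mathcal L$-structure to supply the required $\alpha$) to extend this to a genuine $\langle R_r,R_r^{\pi\beta}\rangle$-invariant partition before applying \Cref{p1p3-blocks}. Your stronger claim that $\sigma_3^{\beta}=\rho_3$ globally, with the telescoping identity of \Cref{Ks-and-Ls-part0} carrying over, is not supported by the hypotheses here.
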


\begin{proof}
Recall from~\Cref{K-in-X} that $\mathcal Y \succeq \mathcal K$ and $\mathcal X \succeq \mathcal L$.
Since $\mathcal X,\mathcal Y \succeq \mathcal B_2$, this forces the blocks of both $\mathcal X$ and $\mathcal Y$ to have cardinality some multiple of $2p_1p_2$. If both have blocks of cardinality $2p_1p_2$ then~\Cref{Ks-and-Ls-part0} completes the proof. So at least one of them must have blocks of cardinality a nontrivial multiple of $2p_1p_2$, which forces this partition to be $\{\Omega\}$. 
Since the conditions on $p_1$ and $p_2$ are equivalent, we assume without loss of generality that $\mathcal X=\{\Omega\}$, and therefore that $\rho_1=\sigma_1$ is central in $\langle C_r, C_r^\pi\rangle$.

Note that since $\mathcal X =\{\Omega\}$,~\Cref{Y<Omega} implies that $p_1<p_2$ and therefore that $\mathcal Y \prec \{\Omega\}$. Since the blocks of $\mathcal Y$ have cardinality a multiple of $2p_1p_2$ that is not $2p_1p_2p_3$, their cardinality must be $2p_1p_2$.

Let $\tau \in R_r$ be such that $\tau$ fixes $K_x$, and let $\tau' \in R_r$ be such that $\tau'$ fixes $L_x$. Siince the blocks of $\mathcal K$ and $\mathcal L$ are $R_r$-invariant and $G$ is block-regular on $\mathcal B_2$, there must be $G$-invariant partitions $\mathcal K'$ and $\mathcal L'$ such that $$\mathcal K\preceq\mathcal K'=\{B_{2,x}\rho_3^i\cup B_{2,x}\tau\rho_3^i: 0 \le i \le p_3-1\}=\{B_{2,x}\rho_3^i\cup B_{2,x}\rho_3^{-i}\tau: 0 \le i \le p_3-1\}$$ $$\text{and }\mathcal L\preceq\mathcal L'=\{B_{2,x}\rho_3^i\cup B_{2,x}\tau'\rho_3^i: 0 \le i \le p_3-1\}=\{B_{2,x}\rho_3^i\cup B_{2,x}\rho_3^{-i}\tau': 0 \le i \le p_3-1\}.$$
Furthermore, if $\ell$ is such that $B_{2,x}\tau'=B_{2,x}\tau\rho_3^\ell=B_{2,x}\rho_3^{-\ell}\tau$ then 
$$\mathcal L'=\{B_{2,x}\rho_3^i\cup B_{2,x}\rho_3^{-i-\ell}\tau: 0 \le i \le p_3-1\}.$$ By replacing $\rho_3$ and $\sigma_3$ by an appropriate power if necessary, we may assume without loss of generality that $\ell=1$.
Importantly, if $K \in \mathcal K$ has nonempty intersection with one of the two blocks of $\mathcal B_2$ in a block of $\mathcal K'$, then it has nonempty intersection with both, and the same is true for $\mathcal L$ with respect to $\mathcal L'$. Note also that $\mathcal Y=\mathcal K'$.

Let $a$ be such that $\sigma_3\rho_3^{-1}\rho_1^a \in G_{C_x}$. We claim that either the orbits of $\langle \rho_2,\rho_3\rangle$ are $G$-invariant, or there is some $x' \in F_1$ such that $\sigma_3\rho_3^{-1}\rho_1^a \in G_{C_{x'}}$ but $\sigma_3\rho_3^{-1}\rho_1^a\notin G_{C_{x'\tau}}$. Since $\sigma_1=\rho_1$, the action of $\sigma_3\rho_3^{-1}\rho_1^a$ is the same as the action of some power of $\rho_1$ on each block of $\mathcal C$ in $B_{2,x}\tau$. If this power is $0$, then by definition of $\mathcal L$ the action of $\sigma_3\rho_3^{-1}\rho_1^a$ must also fix each block of $\mathcal C$ in $B_{2,x}\rho_3^{-\ell}$, since this is in the same block of $\mathcal L'$ as $B_{2,x}\tau$. Repeating this argument, after $p_3$ iterations we have either concluded that $\sigma_3\rho_3^{-1}\rho_1^a$ fixes every block of $\mathcal C$, or there is some $x' \in F_1$ such that $\sigma_3\rho_3^{-1}\rho_1^a \in G_{C_{x'}}$ but $\sigma_3\rho_3^{-1}\rho_1^a\notin G_{C_{x'\tau}}$. In the former case, $a=0$ by~\Cref{pth-power} and the orbits of $\langle \rho_2,\rho_3\rangle$ are $G$-invariant. If this occurs, we complete the proof using~\Cref{p1p3-blocks}. So there must be some $x' \in F_1$ such that $\sigma_3\rho_3^{-1}\rho_1^a \in G_{C_{x'}}$ but $\sigma_3\rho_3^{-1}\rho_1^a\notin G_{C_{x'\tau}}$. Take $g'$ to be an appropriate power of $\sigma_3\rho_3^{-1}\rho_1^a$ so that $g'$ has the same action as $\rho_1$ on the blocks of $\mathcal C$ in $B_{2,x'}\tau$, and take $g$ to be a conjugate of $g'$ such that $g \in G_{C_x}$ has the same action as $\rho_1$ on the blocks of $\mathcal C$ in $B_{2,x}\tau$.

Define $\beta$ as follows. Let $a_i$ be such that on the blocks of $\mathcal C$ in $B_{2,x}\rho_3^i$, $\rho_3^{-i}\sigma_3^i$ has the same action as $\rho_1^{a_i}$. For $y \in K'_x\rho_3^i$, take $y\beta=y\rho_3^{-i}g^{a_i-a_{i-\ell}}\rho_3^i\rho_1^{-a_i}$.

We claim that $\beta \in G^{(2)}$. Let $u, v \in \Omega$.
If $v \in K'_u$, then there is some $i$ such that $(u,v)\beta=(u,v)\rho_3^{-i}g^{a_i-a_{i-\ell}}\rho_3^i\rho_1^{-a_i}$. 
If $v\notin K'_u$ and $v \notin L'_u$, then by hypothesis $B_{2,v}$ lies in an orbit of $G_u$, and since $\beta$ fixes every block of $\mathcal B_2$~\Cref{in-2-closure} produces some $h \in G$ such that $(u,v)\beta=(u,v)h$. 

The remaining possibility is that $v \in L'_u$ but $v \notin B_{2,u}$. Let $i$ be such that $u \in B_{2,x}\rho_3^i$, so $v \in B_{2,x}\tau\rho_3^{i+\ell}$. By definition of $\beta$, $C_u\beta=C_u\rho_3^{-i}g^{a_i-a_{i-\ell}}\rho_3^i\rho_1^{-a_i}$. Now, $C_u\rho_3^{-i}$ lies in $B_{2,x}$, and since $\sigma_1=\rho_1$, $g \in G_x$ fixes every block of $\mathcal C$ in $B_{2,x}$. Thus $C_u\beta=C_u\rho_1^{-a_i}$. Meanwhile, $C_v\beta=C_v\rho_3^{-i-\ell}g^{a_{i+\ell}-a_i}\rho_3^{i+\ell}\rho_1^{-a_{i+\ell}}$. We have $C_v\rho_3^{-i-\ell}$ lies in $B_{2,x}\tau$, and $g$ has the same action as $\rho_1$ on the blocks of $\mathcal C$ in $B_{2,x}\tau$, so $C_v\beta=C_v\rho_1^{a_{i+\ell}-a_i}\rho_1^{-a_{i+\ell}}=C_v\rho_1^{-a_i}$. Since $\mathcal Y=\mathcal K'$ we see that $v \notin Y_u$, so $C_v$ lies in an orbit of $G_u$. Now~\Cref{in-2-closure} produces some $h \in G$ such that $(u,v)\beta=(u,v)h$. This completes the proof that $\beta \in G^{(2)}$.

We now show that the orbits of $\langle \rho_2,\rho_3\rangle$ are invariant under $\langle R_r, R_r^{\pi\beta}\rangle$. We will use~\Cref{blocks-in-F1} with $i=2$ and $j=3$. Since $\mathcal C$ is invariant under $G$, it is also invariant under $\langle R_r, R_r^{\pi\beta}\rangle$ using~\Cref{2-closure-invariant}. The next condition holds with $\alpha=\tau_1\tau\rho_3^\ell$, by definition of $\mathcal L$. We need only show that the orbits of $\langle \rho_2,\rho_3\rangle$ in $F_1$ are invariant under $\langle C_r, C_r^{\pi\beta}\rangle$. Since $\rho_1=\sigma_1$ and the orbits of $\sigma_2$ are the blocks of $\mathcal C$, which are the orbits of $\rho_2$, it is sufficient to show that $\sigma_3^{\beta}$ fixes each orbit of $\langle \rho_2, \rho_3\rangle$ in $F_1$. Let $y \in F_1$ be arbitrary, say $y \in B_{2,x}\rho_3^i$. As calculated in the previous paragraph for $C_u$, we have $C_y\beta^{-1}=C_y\rho_1^{a_i}$, and by definition of $a_i$, this is the same as $C_y\rho_3^{-i}\sigma_3^i$. Likewise, since $C_y\beta^{-1}\sigma_3 \in B_{2,x}\rho_3^{i+1}$, by definition of $a_{i+1}$ we have $$C_y\beta^{-1}\sigma_3\beta=C_y\beta^{-1}\sigma_3\rho_1^{a_{i+1}}=C_y\beta^{-1}\sigma_3\sigma_3^{-i+1}\rho_3^{i+1}.$$ So we have $$C_y\beta^{-1}\sigma_3\beta=C_y\rho_3^{-i}\sigma_3^{i}\sigma_3\sigma_3^{i+1}\rho_3^{i+1}=C_y\rho_3.$$ Thus $\sigma_3^\beta$ fixes each orbit of $\langle \rho_2,\rho_3\rangle$.

Now with this new $G$ we have a $G$-invariant partition with blocks of cardinality $p_2p_3$, so~\Cref{p1p3-blocks} completes the proof.
\end{proof}

With the preceding results in hand, we are in position to deal with the case where the cardinality of the blocks of at least one of $\mathcal X$ and $\mathcal Y$ is a multiple of $p_1p_2p_3$.

\begin{lem}\label{X1-gives-blocks}
Use~\Cref{notn-2} with $s=3$, and~\Cref{notn-blocks}. Suppose that $G$ is block-regular on $\mathcal B_2$, and that the orbits of $\langle \rho_2\rangle$ form a $G$-invariant partition $\mathcal C$. 

Suppose that either $\mathcal Y \succeq \mathcal B_3$ or $\mathcal X \succeq \mathcal B_3$. Then we can find $\beta \in G^{(2)}$ such that $R_r^{\pi\beta}=R_r$.
\end{lem}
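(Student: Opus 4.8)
The plan is to reduce the statement to \Cref{p1p3-blocks} by producing a $G$-invariant partition whose blocks are the orbits of $\langle\rho_2,\rho_3\rangle$. First I would normalise the hypothesis. Both $\mathcal B_1$ (the orbits of $\langle\rho_1\rangle$) and $\mathcal C$ (the orbits of $\langle\rho_2\rangle$) are $G$-invariant partitions with blocks of prime cardinality, so \Cref{reordering}, applied to $\mathcal C$, lets me make the orbits of $\langle\rho_2\rangle$ play the role of $\mathcal B_1$; under this reordering $\mathcal X$ and $\mathcal Y$ interchange. Hence I may assume without loss of generality that $\mathcal Y\succeq\mathcal B_3$, and my goal becomes to show that the orbits of $\langle\rho_2,\rho_3\rangle$ are $G$-invariant.

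Next I would assemble the easy consequences of this hypothesis. Since $\mathcal X\succeq\mathcal B_1$ and $\mathcal X\succeq\mathcal C$ (\Cref{sigmas-on-X}(\ref{B-in-X}) and (\ref{C-in-X})), and likewise for $\mathcal Y$, we have $\mathcal X,\mathcal Y\succeq\mathcal B_2$. As $\mathcal Y\succeq\mathcal B_3$ and every block of $\mathcal Y$ is a union of the two blocks $F_1,F_2$ of $\mathcal B_3$, either $\mathcal Y=\mathcal B_3$ or $\mathcal Y=\{\Omega\}$; in either case \Cref{sigmas-on-X}(\ref{X-constant}) gives that $\sigma_2$ is a single power of $\rho_2$ on each of $F_1$ and $F_2$, equal to $\rho_2$ on $F_1$. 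Block-regularity on $\mathcal B_2$ together with \Cref{notn-2} shows, exactly as in \Cref{p1p3-blocks}, that $\sigma_3\rho_3^{-1}$ fixes every block of $\mathcal B_2$ setwise. Finally, applying \Cref{commuting-refined} with $i=2$, $j=3$ to the partition $\mathcal B_3$ (on whose two blocks $\sigma_2$ is a constant power of $\rho_2$) yields that $\sigma_3$ commutes with $\rho_2$.

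The core of the argument is to upgrade ``$\sigma_3\rho_3^{-1}$ fixes every block of $\mathcal B_2$'' to ``$\sigma_3\rho_3^{-1}$ fixes every block of $\mathcal C$''. Once this is known, the orbits of $\langle\rho_2\rangle$ and of $\langle\sigma_2\rangle$ coincide (both equal $\mathcal C$) and the orbits of $\langle\rho_2,\rho_3\rangle$ and of $\langle\sigma_2,\sigma_3\rangle$ coincide, so by \Cref{orbits-blocks} the orbits of $\langle\rho_2,\rho_3\rangle$ form a $G$-invariant partition and \Cref{p1p3-blocks} finishes the proof. To obtain the upgrade I would exploit that $\mathcal Y\succeq\mathcal B_3$ makes each $F_i$ a single $\equiv_{\mathcal C}$-class (when $\mathcal Y=\mathcal B_3$) or makes all of $\Omega$ one class (when $\mathcal Y=\{\Omega\}$). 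Writing the action of $\sigma_3\rho_3^{-1}$ on the block $C_y$ as a translation among the $p_1$ blocks of $\mathcal C$ inside $B_{2,y}$, I would propagate this translation along $\equiv_{\mathcal C}$-chains, using that $\sigma_3$ commutes with $\rho_2$ in the same way that the relation ``$\sigma_j\rho_j^{-k_C}$ acts as $\rho_i^{\ell}$'' is propagated in \Cref{sigmas-on-X}(\ref{X-constant}) and in Case~1 of \Cref{p1p3-blocks}, and then force the residual component to be trivial by \Cref{pth-power}. As an alternative route for transferring the conclusion from $F_1$ to $F_2$, I would keep \Cref{blocks-in-F1} (with $i=2$, $j=3$) in reserve, feeding it the equal-stabiliser input $G_{C_x}=G_{C_{x\tau_1\alpha}}$ supplied by \Cref{K-in-X} and the block-regularity of $G$.

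The main obstacle is precisely this upgrade from $\mathcal B_2$-level control to $\mathcal C$-level control of $\sigma_3\rho_3^{-1}$. Unlike Case~1 of \Cref{p1p3-blocks}, here I start only with information at the level of $\mathcal B_2$, so the clean ``chain mirror'' is not immediately available and the propagation must be set up by hand; pinning down the base case at $x$ (that $\sigma_3\rho_3^{-1}$ fixes $C_x$, not merely $B_{2,x}$) and carrying the translation consistently around the $\sigma_3$-orbit is the delicate point. The degenerate subcase $\mathcal Y=\{\Omega\}$, where $\sigma_2=\rho_2$ everywhere, I would dispatch alongside the main case or, failing that, by showing $G$ becomes block-regular on $\mathcal C$ and finishing through \Cref{reordering} and \Cref{cor-reg-B1}.
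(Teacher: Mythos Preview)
Your plan has a genuine gap at precisely the step you flag as the ``main obstacle''. The propagation arguments you cite as models---\Cref{sigmas-on-X}(\ref{X-constant}) and Case~1 of \Cref{p1p3-blocks}---both rely on the element being propagated already fixing every block of the prime-cardinality partition setwise. In \Cref{sigmas-on-X}(\ref{X-constant}) the element $\sigma_j\rho_j^{-k_C}$ fixes each block of $\mathcal B$ by construction of $k_C$; in Case~1 of \Cref{p1p3-blocks} the element $\sigma_3\rho_3^{-1}\rho_1^{-k}$ fixes each block of $\mathcal B_1$ because that lemma already \emph{assumes} the orbits of $\langle\rho_1,\rho_3\rangle$ are $G$-invariant. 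In your setup you only know $\sigma_3\rho_3^{-1}$ fixes the blocks of $\mathcal B_2$, not of $\mathcal C$. If you correct at $y$ to get $g=\sigma_3\rho_3^{-1}\rho_1^{-a}\rho_2^{-b}\in G_y$, then for the next point $y'$ in an $\equiv_{\mathcal C}$-chain you get $C_{y'}g=C_{y'}\rho_1^{a'-a}$, where $a'$ is the translation at $y'$; there is no reason this should equal $C_{y'}$ unless $a'=a$, which is exactly what you are trying to prove. The commutation of $g$ with $\rho_2$ tells you that the $g$-orbits of points in $C_{y'}$ are $\rho_2$-translates of one another, but when $g$ does not fix $C_{y'}$ setwise the condition ``$C_{y'}$ not in a single $G_y$-orbit'' places no useful constraint on $g$. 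Your fallback via \Cref{blocks-in-F1} faces the same difficulty: its hypothesis (b) is that the $\langle\rho_2,\rho_3\rangle$-orbits in $F_1$ are already $\langle C_r,C_r^\pi\rangle$-invariant, which is the same upgrade problem restricted to $F_1$, and the equal-stabiliser input (a) is not supplied by \Cref{K-in-X} alone.

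The paper's proof takes a quite different route. Working with the symmetric normalisation $\mathcal X\succeq\mathcal B_3$, it does not attempt to show the orbits of $\langle\rho_1,\rho_3\rangle$ are $G$-invariant. Instead it asks whether the orbits of $\langle\rho_2,\rho_3\rangle$ are $G$-invariant; if so, \Cref{p1p3-blocks} finishes. If not, the failure produces an element of order $p_1$ in a point stabiliser acting nontrivially on $\mathcal C$-blocks, and the paper uses this to analyse $\mathcal K$: it shows $\mathcal K\cap\mathcal B_3=\mathcal B_1$ (otherwise the element just built contradicts $\mathcal X\succeq\mathcal B_3$), then argues that $\mathcal X=\mathcal B_3$ is impossible, so $\mathcal X=\{\Omega\}$ and $\rho_1$ is central. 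A further analysis of how $\equiv_{\mathcal B_1}$-chains can cross between $F_1$ and $F_2$ pins down the sizes and intersections of $\mathcal K$- and $\mathcal L$-blocks, reducing to the configuration handled by \Cref{Ks-and-Ls-part1}. The key missing idea in your proposal is this structural analysis of $\mathcal K$ and $\mathcal L$; the direct propagation you sketch does not go through.
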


\begin{proof}
By~\Cref{reordering} we may exchange $p_1$ with $p_2$ if necessary, so without loss of generality let us assume that $\mathcal X \succeq \mathcal B_3$. By~\Cref{sigmas-on-X}(\ref{X-constant}), $\sigma_1=\rho_1$ on $F_1$, and there is some $i$ such that $\sigma_1=\rho_1^i$ on $F_2$. By~\Cref{sigmas-on-X}(\ref{B-in-X}) and (\ref{C-in-X}), we have $\mathcal Y \succeq \mathcal B_2$.

If there is a $G$-invariant partition with blocks of cardinality $p_2p_3$ then~\Cref{p1p3-blocks} completes the proof, so since the orbits of $\langle \rho_2, \rho_3\rangle$ are an invariant partition under $\sigma_1$ and $\sigma_2$ (as well as under $R_r$), we may assume that $\sigma_3$ does not treat these orbits as an invariant partition. In other words (since $\mathcal C$ is a $G$-invariant partition), there exist $C_1, C_2 \in \mathcal C$ and a value $j$ such that $C_2=C_1\alpha$ for some $\alpha \in \langle \rho_3\rangle$ and $\sigma_3\rho_3^{-1}\rho_1^{-j}$ fixes $C_1$ but not $C_2$. For some $u \in C_1$, let $k$ be such that $u\sigma_3\rho_3^{-1}\rho_2^{-k}\rho_1^{-j} \in G_u$.
Since $\sigma_3$ commutes with $\rho_1$ (because $\sigma_1=\rho_1$ on $F_1$ and $\sigma_1=\rho_1^i$ on $F_2$, see~\Cref{commuting-refined}), $\sigma_3\rho_3^{-1}\rho_2^{-k}\rho_1^{-j}$ must act as a $p_1$-cycle on the blocks of $\mathcal C$ in the block $B_2 \in \mathcal B_2$ that contains $C_2$. Let $g$ be some power of $\sigma_3 \rho_3^{-1}\rho_2^{-k}\rho_1^{-j}$ that has order $p_1$; note that $g$ commutes with $\rho_1$.

Since every non-transitive subgroup of a group of prime degree either fixes a single point or fixes every point (see~\Cref{Gy-affine}), and every orbit of $g$ has length $1$ or $p_1$, it must be the case that the $p_2$ blocks of $\mathcal B_1$ in $B_2$ are either all fixed by $g$, or exactly one of them is fixed by $g$.

 If the intersection of some block of $\mathcal K$ with $F_1$ has cardinality $p_1p_3$ then this generates a $G$-invariant partition with blocks of cardinality $p_1p_3$, and~\Cref{p1p3-blocks} completes the proof. So we may assume that every nonempty intersection of a block of $\mathcal K$ with $F_1$ must have cardinality $p_1, p_1p_2$, or $p_1p_2p_3$; that is, it is a block of $\mathcal B_1$, or a block of $\mathcal B_2$, or $F_1$. 

Suppose $\mathcal K \succeq \mathcal B_2$. This means that every element of $G$ that fixes one block of $\mathcal B_1$ in some block of $\mathcal B_2$ must fix every block of $\mathcal B_1$ in that block of $\mathcal B_2$. In particular, $g$ fixes every block of $\mathcal B_2$ by block-regularity, so as we have just argued must fix at least one block of $\mathcal B_1$ in each block of $\mathcal B_2$, and therefore must fix every block of $\mathcal B_1$. Let $z \in C_2$. We have $g \in G_u$, $B_{1,z}g=B_{1,z}$, and $g$ acts transitively on $B_{1,z}$. For any $y \in \Omega$  we have $B_{1,y}g=B_{1,y}$ and since $g$ commutes with $\rho_1$ we either have $g \in G_y$ fixes $B_{1,y}$ pointwise and is transitive on $B_{1,z}$, or $g$ is transitive on $B_{1,y}$. Therefore every $\equiv_{\mathcal B_1}$-chain that starts at $u$ consists entirely of points that are fixes by $g$, so it is not possible to form an $\equiv_{\mathcal B_1}$-chain from $u$ to $z$. Since we either have $u, z \in F_1$ or $u, z \in F_2$, this contradicts our assumption that $\mathcal B_3 \preceq \mathcal X$. 

We conclude that the $G$-invariant partition formed by taking intersections of blocks of $\mathcal K$ with blocks of $\mathcal B_3$, must be $\mathcal B_1$. This implies that the action of $G_x$ cannot fix any block of $\mathcal B_1$ in $F_1$ other than $B_x$, so by~\Cref{Gy-affine}, $G_x$ must act transitively on the blocks of $\mathcal B_1$ in any block of $\mathcal B_2$ in $F_1$. Furthermore, there is at most one block of $\mathcal B_2$ in $F_2$ for which $G_x$ does not act transitively on the blocks of $\mathcal B_1$ in this block.

Suppose that $\mathcal X=\mathcal B_3$. Then there must be an $\equiv_{\mathcal B_1}$-chain from $u$ to $z$. Furthermore, since $F_1$ and $F_2$ are distinct blocks of $\mathcal X$, every $y_i$ in this chain must lie in the same block of $\mathcal B_3$ as $u$ (and $z$), since $y_i \equiv_{\mathcal B_1} u$. 

Consider the blocks of $\mathcal L$. If the cardinality of these is a multiple of $p_3$, then there is some $\alpha \in C_r$ whose order is a multiple of $p_3$ such that $G_{C_u}=G_{C_u\alpha}$. Since the order of $\alpha$ is a multiple of $p_3$ (and is square-free), there is some $m$ such that $\alpha^m=\rho_3$. Therefore $G_{C_u}=G_{C_u\rho_3}$. Now by~\Cref{Gx-blocks}, the orbits of $\rho_3$ on $\mathcal C$ are $G$-invariant, meaning the orbits of $\langle \rho_2,\rho_3\rangle$ are $G$-invariant. This is a $G$-invariant partition with blocks of cardinality $p_2p_3$, so~\Cref{p1p3-blocks} completes the proof. 

Otherwise, $L_u \cap F_u$ is contained in $B_{2,u}$, so by~\Cref{Gy-affine}, $G_u$ is transitive on the blocks of $\mathcal C$ in every block of $\mathcal B_2$ except $B_{2,u}$ in $F_u=F_z$. We cannot have $z \in B_{2,u}$, so in any $\equiv_{\mathcal B_1}$ chain $y_1, \ldots, y_\ell$ from $u$ to $z$ there must be some $y_i$ that is not in $B_{2,u}$. Let $i$ be the lowest value such that $y_i \notin B_{2,u}$; that is, $B_{2,y_i} \neq B_{2,u}$. Conjugating by the element of $R_r$ that maps $u$ to $y_{i-1}$ (note that this fixes every block of $\mathcal B_2$), we see that $G_{y_{i-1}}$ must be transitive on the blocks of $\mathcal C$ in $B_{2,y_i}$. So there is an element of order $p_1$ in $G_{y_{i-1}}$ that acts as a $p_1$-cycle on the blocks of $\mathcal C$ in $B_{2,y_i}$. Since this element has order $p_1$, its action on the blocks of $\mathcal B_1$ in $B_{2,y_i}$ must fix at least one of these blocks setwise. Therefore this block of $\mathcal B_1$ lies in an orbit of $G_{y_{i-1}}$. Since $G_{y_{i-1}}$ is transitive on the blocks of $\mathcal B_1$ in $B_{2,y_i}$, all of $B_{2,y_i}$ lies in an orbit of $G_{y_{i-1}}$; in particular, $B_{1,y_{i}}$ lies in this orbit, a contradiction to the definition of an $\equiv_{\mathcal B_1}$-chain. This argument not only shows that $\mathcal X=\{\Omega\}$; it also shows that every $\equiv_{\mathcal B_1}$-chain from $u$ to $z$ must pass through the other block of $\mathcal B_3$.

Since $\mathcal X=\{\Omega\}$ we now have $\sigma_1=\rho_1$ from~\Cref{sigmas-on-X}(\ref{X-constant}), and therefore $\rho_1$ is central in $\langle C_r, C_r^\pi\rangle$. Furthermore when we pass between vertices of $F_1$ and $F_2$ in an $\equiv_{\mathcal B_1}$-chain, we must either pass between blocks of $\mathcal B_2$ that intersect the same block of $\mathcal K$, or between blocks of $\mathcal B_2$ that intersect the same block of $\mathcal L$ (or both). This is because if $y_i$ and $y_{i+1}$ are consecutive in an $\equiv_{\mathcal B_1}$-chain and $B_{2,y_{i+1}}$ does not intersect $K_{y_i}$ then $G_{y_i}$ does not fix any block of $\mathcal B_1$ in $B_{2,y_{i+1}}$ setwise, so by~\Cref{Gy-affine} the subgroup of $G_{y_i}$ that fixes $B_{2,y_{i+1}}$ is transitive on the blocks of $\mathcal B_1$ in $B_{2,y_{i+1}}$. Similarly, if $B_{2,y_{i+1}}$ does not intersect $L_{y_i}$ then the subgroup of $G_{y_i}$ that fixes $B_{2,y_{i+1}}$ is transitive on the blocks of $\mathcal C$ in $B_{2,y_{i+1}}$. So if $B_{2,y_{i+1}}$ does not intersect either $K_{y_i}$ or $L_{y_i}$ then $B_{2,y_{i+1}}$ is contained in an orbit of $G_{y_i}$, a contradiction.

We showed above that $L_u \cap F_u \subseteq B_{2,u}$. It must therefore also be the case that $L_u \cap F_u\tau_1 \subseteq B_{2,u}\tau$ for some $\tau \in R_r$.

If either $\mathcal K=\mathcal B_1$ or $\mathcal L\preceq \mathcal B_2$, we may be able to pass via an $\equiv_{\mathcal B_1}$-chain from $B_{2,u}$ to the unique block of $\mathcal B_2$ in $F_u\tau_1$ that has a nontrivial intersection with either $K_u$ or $L_u$, but this is the only block of $\mathcal B_2$ in $F_u\tau_1$ that we can pass to, and from it we can only return to $B_{2,u}$. This contradicts $\mathcal X=\{\Omega\}$. Furthermore, this is also true if the unique block of $F_u\tau_1$ that intersects $K_u$ and the unique block of $F_u\tau_1$ that intersects $L_u$ are equal. 
So it must be the case that each block of $\mathcal K$ has cardinality $2p_1$ and each block of $\mathcal L$ has cardinality either $2p_2$ or $2p_1p_2$, and the cardinality of the intersection of $K_u$ and $L_u$ is either $1$ or $p_1$.

We conclude that every block of $\mathcal B_2$ in $F_1$ other than $B_{2,x}$ lies in an orbit of $G_x$. Also, if $v$ lies in $F_2 \cap K_x$ and $w$ lies in $F_2 \cap L_x$, then every block of $\mathcal B_2$ in $F_2$ other than $B_{2,v}$ and $B_{2,w}$ lies in an orbit of $G_x$. Now~\Cref{Ks-and-Ls-part1} completes the proof.
\end{proof}

Finally, we return to the situation where the cardinality of the blocks of both $\mathcal X$ and $\mathcal Y$ is $2p_1p_2$, in order  to deal with the situation where neither $\rho_1$ nor $\rho_2$ is central in $\langle C_r, C_r^\pi\rangle$.

\begin{lem}\label{not-central}
Use~\Cref{notn-2} with $s=3$, and~\Cref{notn-blocks}. Suppose that $G$ is block-regular on $\mathcal B_2$, that the orbits of $\rho_2$ are $G$-invariant, that both $\mathcal X$ and $\mathcal Y$ have blocks of cardinality $2p_1p_2$ but these do not coincide, and that neither $\rho_1$ nor $\rho_2$ is central in $\langle C_r, C_r^\pi\rangle$. Then there is some $\beta \in G^{(2)}$ such that $\rho_2$ is central in $\langle C_r, C_r^\pi\rangle$.
\end{lem}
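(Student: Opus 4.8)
The plan is to reduce everything to a single generator and then build a conjugating permutation that straightens it out while staying in the $2$-closure.

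\textbf{Reduction to $\sigma_3$.} Since the blocks of $\mathcal X$ and $\mathcal Y$ have the even cardinality $2p_1p_2$, each meets $F_1$ in a set of cardinality $p_1p_2$, i.e.\ in a block of $\mathcal B_2$; together with $\mathcal X\succeq\mathcal B_1$ and $\mathcal Y\succeq\mathcal C$ this gives $\mathcal X,\mathcal Y\succeq\mathcal B_2$. By \Cref{sigmas-on-X}(\ref{X-constant}) there are then constants $k_B,m_B$, one for each $B\in\mathcal B_2$, with $\sigma_1=\rho_1^{k_B}$ and $\sigma_2=\rho_2^{m_B}$ everywhere on $B$, and $k_{B_{2,x}}=m_{B_{2,x}}=1$. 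Because every $\rho_2$-orbit stays inside a single block of $\mathcal B_2$ and $C_r$ is abelian, it follows at once that $\rho_2$ commutes with both $\sigma_1$ and $\sigma_2$ (this is also the content of \Cref{commuting-refined}). Hence $\rho_2$ is central in $\langle C_r,C_r^\pi\rangle$ if and only if $\rho_2$ commutes with $\sigma_3$, and the same equivalence will persist after any conjugation that keeps the new $\sigma_1,\sigma_2$ acting as constant powers of $\rho_1,\rho_2$ on each block of $\mathcal B_2$. So the target is a $\beta\in G^{(2)}$ for which $\rho_2$ commutes with $\sigma_3^\beta$.

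\textbf{Structural data.} The element $\sigma_3$ permutes the blocks of $\mathcal B_2$ in a $p_3$-cycle on each of $F_1$ and $F_2$. Writing coordinates on the $c$-th block of $F_1$ via the $\rho_1$- and $\rho_2$-orbits and using $\sigma_3\sigma_1=\sigma_1\sigma_3$, $\sigma_3\sigma_2=\sigma_2\sigma_3$ together with the block powers above, one finds that $\sigma_3$ acts there affinely, scaling the $\rho_1$-coordinate by $k_{c+1}k_c^{-1}$ and the $\rho_2$-coordinate by $m_{c+1}m_c^{-1}$ (mod $p_1$, resp.\ $p_2$). Thus $\rho_2$ commutes with $\sigma_3$ exactly when $m_c$ is constant, and the hypothesis that neither $\rho_1$ nor $\rho_2$ is central says precisely that both $k_c$ and $m_c$ are non-constant (on $F_1$, and symmetrically on $F_2$). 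I also record the pairing structure as in \Cref{Ks-and-Ls-part0}: $\mathcal Y=\{B_{2,x}\rho_3^i\cup B_{2,x}\tau\rho_3^i\}$ while $\mathcal X=\{B_{2,x}\rho_3^i\cup B_{2,x}\tau\rho_3^{i+\ell}\}$, and $\mathcal X\ne\mathcal Y$ forces $\ell\neq0$. In other words $\sigma_1$ (through $\mathcal X$) and $\sigma_2$ (through $\mathcal Y$) join $F_1$ to $F_2$ with a genuine relative shift $\ell$.

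\textbf{Construction and verification.} The key local move is that conjugating by the map acting as $\sigma_3^{-i}\rho_3^i$ on the $i$-th block of $\mathcal B_2$ in $F_1$ carries $\sigma_2$ to $\rho_2$ and $\sigma_3$ to $\rho_3$ throughout $F_1$: this is the computation underlying \Cref{conj-works-2}, using $\sigma_2\sigma_3=\sigma_3\sigma_2$ and $\sigma_2=\rho_2$ on $B_{2,x}$. Since $\sigma_3\rho_3^{-1}$ fixes every block of $\mathcal B_2$ in $F_1$ setwise, this map fixes every block of $\mathcal B_2$. I would then extend it across $F_2$ by a reflection of the kind used in \Cref{sig1sig2difonF2} — fixing a reference set and composing with a $\gamma\tau_1$ — lining up the $F_2$ planes by means of the shift $\ell$ so that the new $\sigma_2$ also becomes $\rho_2$ on $F_2$. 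Membership $\beta\in G^{(2)}$ is then checked by \Cref{in-2-closure} with $\mathcal D=\mathcal B_2$: when $v\notin X_u$ and $v\notin Y_u$ one gets $B_{2,v}$ inside a single orbit of $G_u$, and as $\beta$ fixes every block of $\mathcal B_2$ one may take $g$ the identity for condition (i); the cases $v\in X_u$ and $v\in Y_u$ are treated with $\mathcal D=\mathcal C$ and $\mathcal D=\mathcal B_1$ respectively, exactly as in the preceding lemmas. Finally one confirms that the new $\sigma_1,\sigma_2$ still act as constant powers of $\rho_1,\rho_2$ on each block of $\mathcal B_2$, so $\rho_2$ still commutes with them, while the $\rho_2$-scalars of the new $\sigma_3$ have become trivial; hence $\rho_2$ is central in $\langle C_r,C_r^{\pi\beta}\rangle$.

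\textbf{Main obstacle.} The delicate step is the extension across $F_2$. Plane-preserving conjugation by powers of $\rho_2$ cannot alter the scalars $m_{c+1}m_c^{-1}$ of $\sigma_3$, and an honest rescaling of a $\rho_2$-orbit does not lie in the $2$-closure of the block action; so the straightened behaviour must be transported from $F_1$ to $F_2$ through a reflection, and the only data available to synchronise the two halves is the relative shift $\ell$ between $\mathcal X$ and $\mathcal Y$. Forcing the two local prescriptions to agree once around the full $p_3$-cycle requires a telescoping identity in $\mathbb Z_{p_2}$, in the spirit of the $k_{i+1}-k_i-j_{i+1}+j_i=0$ calculation of \Cref{Ks-and-Ls-part0}; at the same time one must ensure the single reflection does not spoil the commuting of $\rho_2$ with $\sigma_1$, which is exactly where choosing to straighten $\rho_2$ (rather than $\rho_1$, available after \Cref{reordering}) together with $\ell\neq0$ is needed.
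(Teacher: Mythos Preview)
Your setup is correct through the ``Structural data'' paragraph: the block powers $k_B,m_B$ (the paper's $a_i,b_i$), the pairing $\mathcal Y=\{B_{2,x}\rho_3^i\cup B_{2,x}\tau\rho_3^i\}$ versus $\mathcal X=\{B_{2,x}\rho_3^i\cup B_{2,x}\tau\rho_3^{i+\ell}\}$ with $\ell\neq 0$, and the observation that $\sigma_2$ acts as $\rho_2^{b_i}$ on both $B_{2,x}\rho_3^i$ and $B_{2,x}\tau\rho_3^i$ are all right and match the paper.

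The gap is in the construction and the $G^{(2)}$ verification. You propose $\beta=\sigma_3^{-i}\rho_3^i$ on the $i$th block of $F_1$ and then an unspecified reflection on $F_2$, with the cross cases $v\in X_u$, $v\in Y_u$ handled ``exactly as in the preceding lemmas'' via \Cref{in-2-closure} with $\mathcal D=\mathcal C$ or $\mathcal D=\mathcal B_1$. But those preceding lemmas (specifically \Cref{Ks-and-Ls-part0}) only managed this step because one of $\rho_1,\rho_2$ was central, which let the $\mathcal C$-block action of $\beta$ on both $C_u$ and $C_v$ be expressed as the same power of $\rho_1$. Here \emph{neither} is central, so that mechanism is unavailable: to satisfy the first bullet of \Cref{in-2-closure} you must produce a single $g\in G$ with $(C_u,C_v)\beta=(C_u,C_v)g$ (resp.\ $(B_{1,u},B_{1,v})\beta$), and nothing in your sketch supplies such a $g$. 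The telescoping identity you allude to from \Cref{Ks-and-Ls-part0} was used there to prove $\sigma_3^\beta=\rho_3$ on $F_2$, not to manufacture the witnessing $g$; it does not fill this hole.

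The paper resolves this differently. It takes $\beta$ to be a pure block-wise rescaling, sending $x\rho_3^k\rho_1^i\rho_2^j\mapsto x\rho_3^k\rho_1^{ia_k^{-1}}\rho_2^{jb_k^{-1}}$ on $F_1$, and on $F_2$ uses the carefully chosen basepoints $z_k\in K_{x\rho_3^k}\cap L_{x\rho_3^{k-\ell}}$ together with the shifted rescaling $z_k\rho_1^i\rho_2^j\mapsto z_k\rho_1^{ia_{k-\ell}^{-1}}\rho_2^{jb_k^{-1}}$. The missing ingredient you need is the paper's explicit claim that for every $k$ there is an element of $G$ fixing each $C_x\rho_1^j$ setwise while sending $B_{1,x}\rho_2^j$ to $B_{1,x}\rho_2^{jb_k}$ (and the analogous statement with the roles of $\mathcal B_1$ and $\mathcal C$ swapped, using $a_k$). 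This is proved by writing down concrete elements $\gamma_{2,k}\tau\rho_3^{2k+\ell}$ and $\gamma_{1,k}\tau\rho_3^{2k}$ and computing their action on $B_{2,x}\rho_3^k$; it is precisely here that the non-centrality assumption, via $\mathcal X\neq\mathcal Y$ and the sizes $|K_x|=2p_1$, $|L_x|=2p_2$, is actually exploited. These rescaling elements, transported through $\mathcal K$ and $\mathcal L$, are what furnish the witnessing $g$'s in the cross cases of \Cref{in-2-closure}. Your proposal does not contain this idea, and without it the argument does not close.
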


\begin{proof}
Since the blocks of $\mathcal X$ have cardinality $2p_1p_2$, it must be the case that there is some $z \in X_x$ with $z \notin B_{2,x}$ such that $B_{1,z}$ is not contained in an orbit of $G_x$. Since the blocks of $\mathcal X$ and $\mathcal Y$ do not coincide, $z \notin Y_x$, so $C_z$ is contained in an orbit of $G_x$. Thus for every $i$ it must be the case that $B_{1,z}\rho_2^i$ is not contained in an orbit of $G_x$. Consider the action of $G_x$ on the blocks of $\mathcal C$ in $B_{2,z}$ (note that $B_{2,z}$ is fixed setwise by $G_x$ by the block-regularity of $G$ on $\mathcal B_2$). This is a group acting on a set of cardinality $p_1$. It cannot be transitive since if it were, $B_{2,z}$ and therefore $B_{1,z}$ would lie in an orbit of $G_z$. Thus by~\Cref{Gy-affine} it must fix either a unique block, or every block, of $\mathcal C$ in $B_{2,z}$. This implies that $L_x \cap B_{2,z} \neq \emptyset$. Similarly, we can show that if $z \in Y_x$ with $z \notin B_{2,x}$ then $K_x \cap B_{2,z} \neq \emptyset$.

Note that on any block of $\mathcal B_2$, since $\sigma_1$ acts as some element of $\langle \rho_1 \rangle$ on any block of $\mathcal B_1$ and $\mathcal C$ is $G$-invariant, $\sigma_1$ acts as some fixed element of $\langle \rho_1 \rangle$ everywhere in this block of $\mathcal B_2$. Similarly, $\sigma_2$ acts as some fixed element of $\langle \rho_2\rangle$ everywhere in this block of $\mathcal B_2$.
For $0 \le i \le p_3-1$, let $a_i$ and $b_i$ be such that on $B_{2,x}\rho_3^i$ the action of $\sigma_1$ is the same as the action of $\rho_1^{a_i}$, and the action of $\sigma_2$ is the same as the action of $\rho_2^{b_i}$. Note that $a_0=b_0=1$ by our choice of $x$ in~\Cref{notn-2}.

Similarly to several of our other proofs, let $\tau \in R_r$ be such that $Y_x=B_{2,x} \cup B_{2,x}\tau$, and let $\ell$ be such that $X_x=B_{2,x}\cup B_{2,x}\tau\rho_3^\ell$.

Since $B_{2,x}\rho_3^i$ is in the same block of $\mathcal Y$ as $B_{2,x}\tau\rho_3^i$, $C_{x\tau\rho_3^i}$ does not lie in an orbit of $x\rho_3^i$, so $\sigma_2$ must have the same action on $B_{2,x}\tau\rho_3^i$ as $\rho_2^{b_i}$. Similarly, since $B_{2,x}\rho_3^{i}$ is in the same block of $\mathcal X$ as $B_{2,x}\tau\rho_3^{i+\ell}$, $\sigma_1$ must have the same action on $B_{2,x}\tau\rho_3^{i+\ell}$ as $\rho_1^{a_i}$. In other words, on $B_{2,x}\tau\rho_3^i$, $\sigma_1$ has the same action as $\rho_1^{a_{i-\ell}}$. 

Suppose that $L_x=X_x$. Then $G_x$ fixes every block of $\mathcal C$ in $X_x$. This implies that $a_{i-\ell}=a_i$, and by repeating this argument (using conjugates of $G_x$), $a_{i-j\ell}=a_i$ for every $j$. $a_j=1$ for every $j$, so $\sigma_1=\rho_1$, contradicting our hypothesis that $\rho_1$ is not central in $\langle C_r, C_r^\pi\rangle$. Similarly, $K_x=Y_x$ would imply $\sigma_2=\rho_2$, again a contradiction. Thus, $L_x$ has cardinality $2p_2$, and $K_x$ has cardinality $2p_1$.

We claim that for any $i$, there is an element of $G$ that fixes each $C_x\rho_1^j$ setwise and maps $B_{1,x}\rho_2^j$ to $B_{1,x}\rho_2^{jb_i}$, and also that there is an element of $G$ that fixes each $B_{1,x}\rho_2^j$ setwise and maps $C_x\rho_1^j$ to $C_x\rho_1^{ja_i}$. Note that the inverse of these elements does the same with $b_i$ replaced by $b_i^{-1}$ (the multiplicative inverse of $b_i$ in $\mathbb Z_{p_3}^*$) and $a_i$ replaced by $a_i^{-1}$.

On $B_{2,x}\rho_3^k$, $\sigma_1$ acts as $\rho_1^{a_k}$ and $\sigma_2$ acts as $\rho_2^{b_k}$. 
On $B_{2,x}\tau\rho_3^{k+\ell}$, $\sigma_1$ acts as $\rho_1^{a_{k+\ell-\ell}}=\rho_1^{a_k}$ and $\sigma_2$ acts as $\rho_2^{b_{k+\ell}}$. Let $\gamma_{2,k}\in R_r^\pi$ be such that $$x\rho_3^k\gamma_{2,k}=x\tau\rho_3^{k+\ell}=x\rho_3^k\tau\rho_3^{2k+\ell},$$ and consider the action of $\gamma_{2,k}\tau\rho_3^{2k+\ell}$ on $B_{2,x}\rho_3^k$. Since $|\tau\rho_3^{2k+\ell}|=2$ we have $$x\rho_3^k\gamma_{2,k}\tau\rho_3^{2k+\ell}=x\rho_3^k.$$ Also, $$x\rho_3^k\rho_1^i\rho_2^j\gamma_{2,k}\tau\rho_3^{2k+\ell}=x\rho_3^k\sigma_1^{ia_k^{-1}}\sigma_2^{jb_k^{-1}}\gamma_{2,k}\tau\rho_3^{2k+\ell}=x\rho_3^k\gamma_{2,k}\sigma_1^{-ia_k^{-1}}\sigma_2^{-jb_k^{-1}}\tau\rho_3^{2k+\ell}$$ $$=x\rho_3^k\gamma_{2,k}\rho_1^{-i}\rho_2^{-jb_k^{-1}b_{k+\ell}}\tau\rho_3^{2k+\ell}=x\rho_3^k\tau\rho_3^{2k+\ell}\rho_1^{-i}\rho_2^{-jb_k^{-1}b_{k+\ell}}\tau\rho_3^{2k+\ell}=x\rho_3^k\rho_1^i\rho_2^{jb_k^{-1}b_{k+\ell}}.$$ This implies that if the result is true for $b_{m\ell}$ then it is true for $b_{(m+1)\ell}$ where subscripts are calculated modulo $p_3$, so inductively it is true for every $b_k$.

Similarly, on $B_{2,x}\rho_3^k$, $\sigma_1$ acts as $\rho_1^{a_k}$ and $\sigma_2$ acts as $\rho_2^{b_k}$. 
On $B_{2,x}\tau\rho_3^{k}$, $\sigma_1$ acts as $\rho_1^{a_{k-\ell}}$ and $\sigma_2$ acts as $\rho_2^{b_{k}}$. Let $\gamma_{1,k} \in R_r^\pi$ be such that $$x\rho_3^k\gamma_{1,k}=x\tau\rho_3^k=x\rho_3^k\tau\rho_3^{2k},$$  and consider the action of $\gamma_{1,k}\tau\rho_3^{2k}$ on $B_{2,x}\rho_3^k$. Since $|\tau\rho_3^{2k}|=2$ we have $x\rho_3^k\gamma_{1,k}\tau\rho_3^{2k}=x\rho_3^k$. Also, $$x\rho_3^k\rho_1^i\rho_2^j\gamma_{1,k}\tau\rho_3^{2k}=x\rho_3^k\sigma_1^{ia_k^{-1}}\sigma_2^{jb_k^{-1}}\gamma_{1,k}\tau\rho_3^{2k}=x\rho_3^k\gamma_{1,k}\sigma_1^{-ia_k^{-1}}\sigma_2^{-jb_k^{-1}}\tau\rho_3^{2k}$$ $$=x\rho_3^k\gamma_{1,k}\rho_1^{-ia_k^{-1}a_{k-\ell}}\rho_2^{-j}\tau\rho_3^{2k}=x \rho_3^k\tau\rho_3^{2k}\rho_1^{-ia_k^{-1}a_{k-\ell}}\rho_2^{-j}\tau\rho_3^{2k}=x\rho_3^k\rho_1^{ia_k^{-1}a_{k-\ell}}\rho_2^{j}.$$ This implies that if the result is true for $a_{-m\ell}$ then it is true for $a_{-(m+1)\ell}$ where subscripts are calculated modulo $p_3$, so inductively it is true for every $a_k$. This completes the proof of our claim.

For each $k$, we can choose $z_k \in B_{2,x}\tau\rho_3^k$ such that $z_k \in K_{x\rho_3^k}$ and $z_k \in L_{x\rho_3^{k-\ell}}$, and since the blocks of $\mathcal K$ have cardinality $2p_1$ and the blocks of $\mathcal L$ have cardinality $2p_2$, this choice is unique. Furthermore, if $g \in G_{x\rho_3^k}$ and $B_{1,x\rho_3^k}\rho_2^jg=B_{1,x\rho_3^k}\rho_2^{jb}$ then $B_{1,z_k}g=B_{1,z_k}$ and $B_{1,z_k}\rho_2^jg=B_{1,z_k}\rho_2^{jb}$ since $B_{1,z_k}\rho_2^j \in K_{x\rho_3^k\rho_2^j}$ and $\mathcal K$ is $G$-invariant. Similarly, if $g \in G_{x\rho_3^k}$ and $C_{x\rho_3^{k+\ell}}\rho_1^jg=C_{x\rho_3^{k+\ell}}\rho_1^{ja}$ then $C_{z_{k+\ell}}g=C_{z_{k+\ell}}$ and $C_{z_{k+\ell}}\rho_1^jg=C_{z_{k+\ell}}\rho_1^{ja}$.

Define $\beta$ as follows. For $y=x\rho_3^k\rho_1^i\rho_2^j$, define $y\beta=x\rho_3^k\rho_1^{ia_k^{-1}}\rho_2^{jb_k^{-1}}$.  Now for
For $z=z_k\rho_1^i\rho_2^j$, define $z \beta = z_k\rho_1^{ia_{k-\ell}^{-1}}\rho_2^{jb_k^{-1}}$.

We show now that $\beta \in G^{(2)}$. For $(u,v)$ with $v\in B_{2,u}$, this follows from the claim we proved above. If $v \notin X_u$ and $v \notin Y_u$ then $G_u$ is transitive on $B_{2,v}$; since $\beta$ fixes each block of $\mathcal B_2$,~\Cref{in-2-closure} produces some $h \in G$ such that $(u,v)\beta=(u,v)h$. 

Suppose that $v \in X_u$ but $v \notin B_{2,u}$. Then $v \notin Y_u$. Without loss of generality, assume $u \in F_1$, say $u =x\rho_3^k\rho_1^{i_1}\rho_2^{j_1}$. Then since $v \in X_u$ but $v \notin B_{2,u}$, we have $v=z_{k+\ell}\rho_1^{i_2}\rho_2^{j_2}$. So $u\beta=x\rho_3^k\rho_1^{i_1a_k^{-1}}\rho_2^{j_1b_k^{-1}}$ and $v\beta=z_{k+\ell}\rho_1^{i_2a_k^{-1}}\rho_2^{j_2b_{k+\ell}^{-1}}$. The claim we proved above implies (after taking a conjugate of the inverse of one of the elements found there) that there is some element $g \in G$ that fixes each $B_{1,x\rho_3^k}\rho_2^j$ setwise, and maps each $C_{x\rho_3^k}\rho_1^j$ to $C_{x\rho_3^k}\rho_1^{ja_k^{-1}}$. So $ug=x\rho_3^k\rho_1^{i_1a_k^{-1}}\rho_2^{j_1}$. Since $G$ is block-regular on $\mathcal B_2$ it must be the case that $g$ fixes every block of $\mathcal B_2$.  Since $\beta$ fixes $x\rho_3^k \in B_{2,u}$ and by definition of $z_{k+\ell}$ we have $z_{k+\ell} \in L_{x\rho_3^k}$, so by the above argument 
we get $C_{z_{k+\ell}}\rho_1^jg=C_{z_{k+\ell}}\rho_1^{ja_k^{-1}}$. Thus $C_ug=C_u\beta$ and $C_vg=C_v\beta$. Since $v \notin Y_u$, $C_v$ lies in an orbit of $G_u$, so by~\Cref{in-2-closure} there is some $h \in G$ such that $(u,v)\beta=(u,v)h$.

Finally, suppose that $v \in Y_u$ but $v \notin B_{2,u}$. Then $v \notin X_u$. Without loss of generality, assume $u \in F_1$, say $u =x\rho_3^k\rho_1^{i_1}\rho_2^{j_1}$. Then since $v \in Y_u$ but $v \notin B_{2,u}$, we have $v=z_{k}\rho_1^{i_2}\rho_2^{j_2}$. So $u\beta=x\rho_3^k\rho_1^{i_1a_k^{-1}}\rho_2^{j_1b_k^{-1}}$ and $v\beta=z_{k}\rho_1^{i_2a_{k-\ell}^{-1}}\rho_2^{j_2b_{k}^{-1}}$. The claim we proved above implies (after taking a conjugate of the inverse of one of the elements found there) that there is some element $g \in G$ that fixes each $C_{x\rho_3^k}\rho_1^j$ setwise, and maps each $B_{1,x\rho_3^k}\rho_2^j$ to $B_{1,x\rho_3^k}\rho_2^{jb_k^{-1}}$. So $ug=x\rho_3^k\rho_1^{i_1}\rho_2^{j_1b_k^{-1}}$. Since $G$ is block-regular on $\mathcal B_2$ it must be the case that $g$ fixes every block of $\mathcal B_2$.  Since $\beta$ fixes $x\rho_3^k \in B_{2,u}$ and by definition of $z_{k}$ we have $z_{k} \in K_{x\rho_3^k}$, so by the above argument 
we get $B_{1,z_{k}}\rho_2^jg=B_{1,z_{k}}\rho_2^{jb_k^{-1}}$. Thus $B_{1,u}g=B_{1,u}\beta$ and $B_{1,v}g=B_{1,v}\beta$. Since $v \notin X_u$, $B_{1,v}$ lies in an orbit of $G_u$,  so by~\Cref{in-2-closure} there is some $h \in G$ such that $(u,v)\beta=(u,v)h$.

We have now shown that $\beta \in G^{(2)}$. Next we will show that $\sigma_2^\beta=\rho_2$.  Let $y=x\rho_3^{k}\rho_1^{i}\rho_2^{j} \in F_1$. Then $$y\sigma_2^\beta=y\beta^{-1}\sigma_2\beta=x\rho_3^{k}\rho_1^{ia_{k}}\rho_2^{jb_{k}}\sigma_2\beta=x\rho_3^{k}\rho_1^{ia_{k}}\rho_2^{(j+1)b_{k}}\beta$$ by definition of $b_{k}$. And this is $x\rho_3^{k}\rho_1^{i}\rho_2^{j+1}=y\rho_2$. 
Now let $z=z_k\rho_1^i\rho_2^j \in F_2$. Then $$z\sigma_2^\beta=z\beta^{-1}\sigma_2\beta=z_k\rho_1^{ia_{k-\ell}}\rho_2^{jb_k}\sigma_2\beta.$$ Since $\sigma_2$ has the same action on $B_{2,x}\tau\rho_3^k$ as $\rho_2^{b_k}$, this is $$z_k\rho_1^{ia_{k-\ell}}\rho_2^{(j+1)b_k}\beta=z_k\rho_1^i\rho_2^{j+1}=z\rho_2.$$ Thus after conjugating by $\beta$, $\rho_2$ is central in $\langle C_r, C_r^{\pi\beta}\rangle$, completing the proof.
\end{proof}

Putting the preceding results together, we are able to complete the proof when $s=3$ and $G$ is block-regular on the blocks of $\mathcal B_2$.

\begin{prop}\label{reg-on-B2-done}
Use~\Cref{notn-2} with $s=3$. Suppose that the action of $G$ is block-regular on the blocks of $\mathcal B_2$. Then there is some $\beta\in G^{(2)}$ such that $R_r^{\pi\beta}=R_r$.
\end{prop}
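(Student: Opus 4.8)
The plan is to arrange that both $\mathcal X$ and $\mathcal Y$ are defined and coarser than $\mathcal B_2$, and then to run a finite case analysis on the cardinalities of their blocks, feeding each case to a lemma already established in this section. First I would apply \Cref{2-regular-fix-sig-2-part-1}: after conjugating $R_r^\pi$ by a suitable element of $G^{(2)}$ (which fixes every block of $\mathcal B_2$, so that block-regularity on $\mathcal B_2$ is preserved, as explained at the end of Section~2) we may assume $\mathcal X \succeq \mathcal B_2$. Then \Cref{2-regular-fix-sig-1-part-1} shows the orbits of $\langle \rho_2\rangle$ form a $G$-invariant partition $\mathcal C$, so that $\mathcal Y$ and $\mathcal L$ are defined; and applying \Cref{sigmas-on-X}(\ref{C-in-X}) to the relation $\equiv_{\mathcal C}$ together with part (\ref{B-in-X}) gives $\mathcal Y \succeq \mathcal B_2$ as well.

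The key structural observation is that, because $G$ is block-regular on $\mathcal B_2$, the induced group $G_{\mathcal B_2}$ is regular of degree $2p_3$; as it contains the faithful image of $R_r$ it must be isomorphic to $D_{2p_3}$. Consequently the only $G$-invariant partitions coarser than $\mathcal B_2$ are those pulled back via \Cref{block-form} from the subgroup-coset partitions of this regular $D_{2p_3}$, namely $\mathcal B_2$ (block cardinality $p_1p_2$), $\mathcal B_3$ (cardinality $p_1p_2p_3$), $\{\Omega\}$, and the partitions of block cardinality $2p_1p_2$ coming from the order-$2$ subgroups of $D_{2p_3}$. In particular at least one $G$-invariant partition $\mathcal D$ of block cardinality $2p_1p_2$ always exists. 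Since $\mathcal X,\mathcal Y \succeq \mathcal B_2$, each has block cardinality $p_1p_2$, $2p_1p_2$, $p_1p_2p_3$, or $2p_1p_2p_3$. If either $\mathcal X \succeq \mathcal B_3$ or $\mathcal Y \succeq \mathcal B_3$ (that is, one of these cardinalities is a multiple of $p_3$), then \Cref{X1-gives-blocks} completes the proof.

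Otherwise both $\mathcal X$ and $\mathcal Y$ have block cardinality $p_1p_2$ or $2p_1p_2$. If there is a common $G$-invariant coarsening $\mathcal D$ of block cardinality $2p_1p_2$ with $\mathcal X,\mathcal Y \preceq \mathcal D$, then \Cref{X-Ycoincide} finishes. Such a $\mathcal D$ is available precisely when at least one of $\mathcal X,\mathcal Y$ equals $\mathcal B_2$ (take $\mathcal D$ to be any cardinality-$2p_1p_2$ partition, or the larger of the two), or when $\mathcal X=\mathcal Y$ has cardinality $2p_1p_2$. The one remaining configuration is that $\mathcal X$ and $\mathcal Y$ both have cardinality $2p_1p_2$ but are distinct, in which case no such $\mathcal D$ can exist (two distinct partitions of this cardinality cannot jointly refine a third of the same cardinality). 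Here I would split on centrality: if $\rho_1$ or $\rho_2$ is central in $\langle C_r, C_r^\pi\rangle$, then \Cref{Ks-and-Ls-part0} applies directly; if neither is central, I would first invoke \Cref{not-central} to pass (by a further conjugation fixing every block of $\mathcal B_2$) to a group in which $\rho_2$ is central, and then finish with \Cref{Ks-and-Ls-part0}. Throughout, \Cref{reordering} lets me interchange $p_1$ and $p_2$, so the asymmetric hypotheses of these lemmas cost nothing.

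The genuine content all sits in the lemmas, so the assembly is essentially bookkeeping, and the two points needing care are the following. First, I must check that block-regularity on $\mathcal B_2$ survives each conjugation; this holds because every $\beta$ produced by the cited lemmas fixes each block of $\mathcal B_2$ setwise, hence leaves $G_{\mathcal B_2}$ unchanged. Second, and this is the main obstacle, is the distinct-$2p_1p_2$ non-central case: it is exactly the configuration where no coarsening of cardinality $2p_1p_2$ is available and where the block-producing arguments require centrality of $\rho_1$ or $\rho_2$, so \Cref{not-central} must manufacture that centrality before \Cref{Ks-and-Ls-part0} (and ultimately \Cref{p1p3-blocks}) can take over. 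I would also note at the outset that if $G$ is in addition block-regular on $\mathcal B_1$, then \Cref{cor-reg-B1} finishes at once.
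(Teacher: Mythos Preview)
Your proposal follows essentially the same route as the paper's proof: reduce to $\mathcal X,\mathcal Y\succeq\mathcal B_2$, then case-split on the block cardinalities of $\mathcal X$ and $\mathcal Y$, dispatching each case to \Cref{X-Ycoincide}, \Cref{X1-gives-blocks}, \Cref{Ks-and-Ls-part0}, or \Cref{not-central}. Your derivation of $\mathcal Y\succeq\mathcal B_2$ directly from \Cref{sigmas-on-X}(\ref{B-in-X}),(\ref{C-in-X}) is in fact a little cleaner than the paper, which instead swaps $p_1,p_2$ via \Cref{reordering} and reapplies \Cref{2-regular-fix-sig-2-part-1}.

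There is one small gap. In the final subcase (both $\mathcal X,\mathcal Y$ of cardinality $2p_1p_2$, distinct, neither $\rho_1$ nor $\rho_2$ central) you write that after invoking \Cref{not-central} you ``finish with \Cref{Ks-and-Ls-part0}''. But the conjugation produced by \Cref{not-central} changes $G$ to $\langle R_r,R_r^{\pi\beta}\rangle$, and the new $\mathcal X,\mathcal Y$ computed for this group need not still have block cardinality $2p_1p_2$; they could, for instance, now satisfy $\mathcal X\succeq\mathcal B_3$ or coincide. The paper handles this correctly by saying ``one of the previous cases applies'': after \Cref{not-central} you re-enter the case analysis from the top, now with $\rho_2$ central, so the non-central branch cannot recur and the argument terminates. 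Your check that block-regularity on $\mathcal B_2$ survives (because each $\beta$ fixes every block of $\mathcal B_2$, hence the induced action on $\mathcal B_2$ is unchanged) is exactly what makes this re-entry legitimate.
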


\begin{proof}
Use~\Cref{notn-blocks}. After applying~\Cref{2-regular-fix-sig-2-part-1} and/or~\Cref{2-regular-fix-sig-1-part-1} if necessary, we may assume that $\mathcal X \succeq \mathcal B_2$, that $\mathcal C$ exists, and (possibly applying~\Cref{2-regular-fix-sig-2-part-1} after exchanging $p_1$ and $p_2$ using~\Cref{reordering}) that $\mathcal Y \succeq \mathcal B_2$.

Since $G$ is block-regular on $\mathcal B_2$, for every $z \in F_2$ the partition $\mathcal D_z=\{\{(B_{2,x}\cup B_{2,z})\rho_3^i\}: 0 \le i \le p_3-1\}$ is $G$-invariant. Suppose that for some $\mathcal D_z$ we have $\mathcal X, \mathcal Y \preceq \mathcal D_z$. Then~\Cref{X-Ycoincide} completes the proof. This deals with the possibilities that the blocks of both $\mathcal X$ and $\mathcal Y$ have cardinality $p_1p_2$, or that one has cardinality $p_1p_2$ while the other has cardinality $2p_1p_2$, or that both have cardinality $2p_1p_2$ and they coincide.

The remaining possibilities for $\mathcal X$ and $\mathcal Y$ are: the blocks of both have cardinality $2p_1p_2$ but they do not coincide; or at least one of them has blocks whose cardinality is a multiple of $p_1p_2p_3$. In the latter case,~\Cref{X1-gives-blocks} completes the proof. 

If $\mathcal X$ and $\mathcal Y$ both have blocks of cardinality $2p_1p_2$ and either $\rho_1$ or $\rho_2$ is central in $\langle C_r,C_r^\pi\rangle$ then~\Cref{Ks-and-Ls-part0} completes the proof. If neither $\rho_1$ nor $\rho_2$ is central in $\langle C_r, C_r^\pi\rangle$ then~\Cref{not-central} allows us to find a conjugate in which $\rho_2$ is central. Now one of the previous cases applies and we can complete the proof.
\end{proof}

\section{$G$ is block-regular on $\mathcal B_3$}

When $s=3$ there are only two blocks of $\mathcal B_3$, so $G$ cannot help but be block-regular on $\mathcal B_3$. As in the situation where $G$ was block-regular on $\mathcal B_2$, the cases we need to consider largely depend on the structure of $\mathcal X$ and $\mathcal Y$ (using~\Cref{notn-blocks}).

One preliminary result about $D_{2pq}$ will be important; we will apply this to the action of $G$ on the blocks of $\mathcal C$.

\begin{lem}\label{orbits-D2pq}
Use~\Cref{notn-2} with $s=2$. Suppose that $\sigma_1=\rho_1$, and that the orbits of $G_x$ in $F_2$ have length $p_2$. Then the orbits of $G_x$ in $F_2$ are the orbits of $\rho_2$ in $F_2$.
\end{lem}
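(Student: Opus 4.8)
The plan is to analyse the action of $G_x$ on $F_2$ through the system of $\mathcal B_1$-blocks it contains. First I would record that, since $x \in F_1$ and $\sigma_1 = \rho_1$, the stabiliser $G_x$ fixes $F_1$ (hence $F_2$) setwise, so by~\Cref{commuting} every element of $G_x$ centralises $\rho_1$. The orbits of $\rho_1$ on $F_2$ are exactly the $p_2$ blocks of $\mathcal B_1$ lying in $F_2$, and because $G_x$ centralises $\rho_1$ it permutes these $p_2$ blocks; call this set $\Delta$.

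Next I would show that every orbit of $G_x$ on $F_2$ is a transversal of $\Delta$. For a block $D \in \Delta$, the subgroup of $G_x$ fixing $D$ centralises the $p_1$-cycle $\rho_1|_D$, so on $D$ it lies in $\langle \rho_1|_D\rangle$; thus any $G_x$-orbit contained in a single block would have length $1$ or $p_1$. Since the orbits have length $p_2 \neq 1, p_1$, no orbit can meet a block in more than one point, and being of length $p_2 = |\Delta|$ each orbit meets every block exactly once; in particular $G_x$ is transitive on $\Delta$. Moreover $G_x$ fixes the $\mathcal B_1$-block of $x$ in $F_1$ pointwise (its elements centralise $\rho_1$ and fix $x$), which forces $G_x = G_{x\rho_1^a}$ for every $a$, and hence that $\rho_1$ normalises $G_x$. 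Thus $G_x \trianglelefteq N := \langle \rho_1, G_x\rangle$, and the $G_x$-orbits, being the orbits of a normal subgroup, form an $N$-invariant partition $\mathcal P$ of $F_2$ into $p_1$ blocks of cardinality $p_2$.

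To finish, it suffices to prove that $\mathcal P$ is in fact $C_r$-invariant. Granting this, \Cref{block-form} applied to the regular action of $C_r$ on $F_2$ identifies the blocks of any $C_r$-invariant partition with blocks of size $p_2$ as the orbits of the unique subgroup of order $p_2$ of $C_r$, namely $\langle\rho_2\rangle$; therefore $\mathcal P$ is precisely the set of orbits of $\rho_2$ on $F_2$, as required. Since $\mathcal P$ is already $\langle\rho_1\rangle$-invariant, $C_r$-invariance amounts to the single statement that $\rho_2$ preserves $\mathcal P$, equivalently that $\rho_2$ normalises $G_x$, equivalently that $G_x$ fixes $x\rho_2$.

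The hard part will be exactly this last equivalence: showing that $\rho_2$ preserves the orbit partition $\mathcal P$. The plan here is to exploit the dihedral structure, using the reflection $\tau_1$ (which inverts $\rho_2$ and interchanges $F_1$ and $F_2$) together with \Cref{Gy-affine} applied to the prime-degree action of $G_x$ on $\Delta$: knowing that $G_x$ is transitive on $\Delta$ while pointwise-fixing the $\mathcal B_1$-block of $x$ in $F_1$ should pin down the generator of the $p_2$-action as a normaliser of $\rho_2$, forcing the transversal orbits to be genuine $\rho_2$-orbits rather than ``sloped'' transversals. I expect this normalisation step to be the crux, and the point at which the ambient dihedral group $G$ (and not merely the abstract centraliser of $\rho_1$ in $\Sym(F_2)$) must be used, since the abstract centraliser alone does admit sloped transversals of the required orbit length.
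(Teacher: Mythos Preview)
Your setup is clean and correct: you rightly observe that $G_x$ centralises $\rho_1$ (via \Cref{commuting}), that each $G_x$-orbit on $F_2$ is therefore a transversal of the $p_2$ blocks of $\mathcal B_1$ in $F_2$, and that the resulting partition $\mathcal P$ is $\langle\rho_1\rangle$-invariant. Your reduction to showing that $\mathcal P$ is $\langle\rho_2\rangle$-invariant is also correct. (One small point: ``$\rho_2$ preserves $\mathcal P$'' is implied by, but not obviously equivalent to, ``$\rho_2$ normalises $G_x$'', since the latter concerns $G_x$-orbits on all of $\Omega$, not just on $F_2$. This does not affect the logic if you aim to prove the stronger statement.)

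However, the proposal stops precisely at the hard step. Your plan to ``pin down the generator of the $p_2$-action as a normaliser of $\rho_2$'' via \Cref{Gy-affine} and the reflection $\tau_1$ is not a proof, and I do not see how to make it one: \Cref{Gy-affine} applied to the action of $G_x$ on $\Delta$ yields only an element of order $p_2$ in $G_x$ acting transitively on $\Delta$, with no a priori link to $\rho_2$ (which does not lie in $G_x$). The paper's argument is quite different and genuinely computational. It fixes the element $g_0=\sigma_2\rho_2^{-1}\rho_1^{-a_0}\in G_x$, introduces parameters $a_j,b_j$ recording how $\sigma_2\rho_2^{-1}$ acts on each $\mathcal B_1$-block in $F_1$ and $F_2$ respectively, and writes the $\langle g_0\rangle$-orbit of $z=x\tau_1$ explicitly as $z\rho_2^{i(k-1)}\rho_1^{b_{i-1}+\cdots+b_0-ia_0}$. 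Comparing this orbit with its $\rho_2^{k-1}$-translate (the $G_{x\rho_2^{k-1}}$-orbit of $z\rho_2^{k-1}$, computed two ways) forces $b_{i+1}-b_i$ to be constant; summing over a full period modulo $p_1$ gives $a_1=a_0$ and hence all $b_i$ equal, after which \Cref{pth-power} kills the common value. This orbit-translation comparison is the idea your sketch is missing, and it does not seem to fall out of a normaliser argument alone.
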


\begin{proof}
Since the orbits of $G_x$ in $F_2$ have length $p_2$,~\Cref{cyclic-enough} together with~\Cref{notn-2} implies that $\sigma_2$ has the same action as $\rho_2^k$ on the blocks of $\mathcal B_1$ in $F_2$, for some $1 <k \le p_2-1$.

For $0 \le j\le p_2-1$, define $a_j$ by $\sigma_2\rho_2^{-1}\rho_1^{-a_j}$ fixes $x\rho_2^{j(k-1)}$ (this works because $k-1$ is a unit in $\mathbb Z_{p_2}^*$). Note that since $\rho_1=\sigma_1$ commutes with $\sigma_2$, we also have $\sigma_2\rho_2^{-1}\rho_1^{-a_j}$ fixes $x\rho_2^j\rho_1^i$ for every $i$. For $0 \le j \le p_2-1$, define $b_j$ by $\sigma_2\rho_2^{-k}\rho_1^{-b_j}$ fixes $x\tau_1\rho_2^{j(k-1)}$ (and as above, also fixes $x\tau_1\rho_2^{j(k-1)}\rho_1^i$ for every $i$). Our goal is to show that $b_i=0$ for every $i$. Note that since all of our $b_i$s are exponents of $\rho_1$, calculations are always being performed modulo $p_1$ although we will often simply write equality for simplicity.

Let $z=x\tau_1$, and define $g_0=\sigma_2\rho_2^{-1}\rho_1^{-a_0}$. Then for every $i$, 
$$zg_0^i=z\rho_2^{i(k-1)}\rho_1^{b_{i-1}+\cdots+b_0-ia_0},$$
so this collection of $p_2$ points is the orbit of $z$ under $G_x$.

Now consider the orbit of $z\rho_2^{k-1}$ under $G_{x\rho_2^{k-1}}$. Since conjugating $G_x$ by $\rho_2^{k-1}$ produces $G_{x\rho_2^{k-1}}$ and translates its orbits, this must be
$$\{(z\rho_2^{k-1})\rho_2^{i(k-1)}\rho_1^{b_{i-1}+\cdots+b_0-ia_0}: 0 \le i \le p_2-1\}.$$
However, we can also calculate this orbit directly as we did the orbit of $z$ under $G_x$: taking $g_1=\sigma_2\rho_2^{-1}\rho_1^{-a_1}$, we have
$$(z\rho_2^{k-1})g_1^i=(z\rho_2^{k-1})\rho_2^{i(k-1)}\rho_1^{b_i+\cdots+b_1-ia_1}.$$
In particular, since these orbits must be equal, $b_0-a_0=b_1-a_1$; rearranging, $b_1-b_0=a_1-a_0$. 
More generally, substituting 
$$b_{i-1}+\cdots+b_0-ia_0=b_i+\cdots+b_1-ia_1$$
$$\text{ into }b_i+\cdots+b_0-(i+1)a_0=b_{i+1}+\cdots+b_1-(i+1)a_1$$
yields $b_i-a_0=b_{i+1}-a_1$, and rearranging gives $b_{i+1}-b_i=a_1-a_0$. Thus for every $i$, $b_i=b_0+i(a_1-a_0)$.

Since we must have $b_{p_2}=b_0$, and the above calculation yields $b_{p_2} \equiv b_0+p_2(a_1-a_0)\pmod{p_2}$, we must have $a_1=a_0$. This implies that $b_i=b_0$ for every $i$. By definition of $b_i$, we now have $\sigma_2=\rho_2^k\rho_1^{b_0}$ everywhere on $F_2$. Now~\Cref{pth-power} tells us that $\rho_1^{b_0}$ is the identity; that is, $b_0=0$. This completes the proof.
\end{proof}

Using this, we can complete the proof in the case where $G$ is not block-regular on $\mathcal B_2$.

\begin{prop}\label{3-reg-done}
Use~\Cref{notn-2} with $s=3$. Suppose that $G$ is not block-regular on the blocks of $\mathcal B_2$. Then there is some $\beta \in G^{(2)}$ such that $R_r^{\pi\beta}=R_r$.
\end{prop}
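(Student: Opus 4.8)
The plan is to treat this proposition as the complementary case to Sections~5 and~6, reducing it either to an immediate conclusion or to the study of the quotient $D_{2p_1p_3}$ obtained by collapsing a $G$-invariant partition $\mathcal C$ with blocks of cardinality $p_2$, where the two-prime machinery (in particular~\Cref{orbits-D2pq}) can be brought to bear.

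First I would clear away the easy reductions. If $F_2$ is an orbit of $G_x$, then~\Cref{use-cyclic} finishes immediately, so assume it is not; and if $G$ is block-regular on $\mathcal B_1$, then~\Cref{cor-reg-B1} finishes, so assume $G$ is not block-regular on $\mathcal B_1$ either. Next I would secure the partition $\mathcal C$. Applying~\Cref{2-regular-fix-sig-2-part-1} (together with the licensed abuse of replacing $R_r^\pi$ by a conjugate from $G^{(2)}$) we may assume $\mathcal X \succeq \mathcal B_2$, and then~\Cref{2-regular-fix-sig-1-part-1} shows the orbits of $\langle \rho_2\rangle$ form a $G$-invariant partition $\mathcal C$. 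The key point is that neither of these lemmas invoked block-regularity on $\mathcal B_2$, so both remain available here. With $\mathcal C$ in hand, $\mathcal Y$ and $\mathcal L$ of~\Cref{notn-blocks} are defined.

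The core of the argument is to pass to the action $G_{\mathcal C}$ of $G$ on the $2p_1p_3$ blocks of $\mathcal C$. Since $\mathcal C \preceq \mathcal B_2 \preceq \mathcal B_3$, the partitions $\mathcal B_2$ and $\mathcal B_3$ descend, and one checks that $G_{\mathcal C}$ satisfies~\Cref{notn-2} for $s=2$ with the two primes $p_1$ and $p_3$ (the generators being the images of $\rho_1,\rho_3,\tau_1$ and of $\sigma_1,\sigma_3,\tau_2$), the image of $\mathcal B_2$ serving as the finest partition. Because a block of $\mathcal B_2$ corresponds exactly to a block of the image of $\mathcal B_2$ in the quotient, $G_{\mathcal C}$ is block-regular on this finest partition if and only if $G$ is block-regular on $\mathcal B_2$; by hypothesis it is not. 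In this two-prime non-block-regular regime the relevant point stabiliser acts with full-length orbits (this is where~\Cref{Gy-affine} controls the prime-degree constituents), and after arranging $\sigma_1=\rho_1$ on the quotient,~\Cref{orbits-D2pq} (with $p_3$ in the role of its second prime) identifies the orbits of the image point-stabiliser in the image of $F_2$ with the orbits of $\rho_3$. Via~\Cref{Gx-blocks} and~\Cref{orbits-blocks} this produces a new $G$-invariant partition of $\Omega$ with blocks of cardinality $p_2p_3$, namely the orbits of $\langle \rho_2,\rho_3\rangle$.

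Finally, with the orbits of $\langle \rho_2,\rho_3\rangle$ now $G$-invariant, I would use~\Cref{reordering} to pull $p_2,p_3$ to the front of the ordering and re-examine the block-regularity: the gain of this genuinely new invariant partition is what prevents circularity, and it should place us in the block-regular-on-$\mathcal B_2$ situation (where~\Cref{reg-on-B2-done} or~\Cref{p1p3-blocks} applies) or into~\Cref{use-cyclic}/\Cref{cor-reg-B1}. I expect the main obstacle to be the middle paragraph: verifying that~\Cref{notn-2} transfers faithfully to $G_{\mathcal C}$ (in particular the distinguished-point conditions and that the images of the $\sigma_i$ may be taken to be powers of $\rho_i$), arranging the hypothesis $\sigma_1=\rho_1$ of~\Cref{orbits-D2pq} on the quotient despite the absence of block-regularity on $\mathcal B_1$, and confirming that the structural conclusion drawn in the quotient lifts to a genuine element of $G^{(2)}$ — for which~\Cref{2-closure-invariant} and~\Cref{in-2-closure} are the essential tools.
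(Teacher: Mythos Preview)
Your overall plan coincides with the paper's: reduce to the quotient action $G_{\mathcal C}$, invoke~\Cref{orbits-D2pq} there to produce a $G$-invariant partition with blocks of cardinality $p_2p_3$, then reorder the primes and hand off to~\Cref{reg-on-B2-done}. That skeleton is correct. However, the two places you flag as ``obstacles'' are not technical checks that can be deferred; they are the substance of the proof, and your sketch does not supply them.

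First, you need $\sigma_1=\rho_1$ on the quotient before~\Cref{orbits-D2pq} can be applied. The paper obtains this not by a further conjugation but by showing that $\mathcal X=\{\Omega\}$: once $\mathcal X,\mathcal Y\succeq\mathcal B_2$, either both are $\preceq\mathcal B_3$ (in which case $F_2$ is a single orbit of $G_x$ and~\Cref{use-cyclic} finishes), or some block of $\mathcal X$ meets $F_2$; the element $g=\sigma_3\rho_3^{-1}\rho_1^i\rho_2^j\in G_x$ (which exists precisely because $G$ is \emph{not} block-regular on $\mathcal B_2$) then sweeps $X_x$ across every block of $\mathcal B_2$ in $F_2$, forcing $\mathcal X=\{\Omega\}$ and hence $\sigma_1=\rho_1$ by~\Cref{sigmas-on-X}(\ref{X-constant}). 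Your proposal does not provide this mechanism.

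Second, your assertion that ``the relevant point stabiliser acts with full-length orbits'' via~\Cref{Gy-affine} is not justified: \Cref{Gy-affine} only controls actions on sets of prime cardinality, not on the $p_1p_3$ blocks of $\mathcal C$ in $F_2$. The paper establishes that the orbits of $G_x$ on $\mathcal C$-blocks in $F_2$ have length exactly $p_3$ by a genuine contradiction argument: if the $p_1$ blocks of $\mathcal C$ inside some $B_{2,z}\subset F_2$ were fused by $G_x$, one extracts an element $h$ of order $p_1$ whose action forces $\mathcal Y=\{\Omega\}$, hence $\sigma_2=\rho_2$; but then $h$ commutes with $\rho_2$ by~\Cref{commuting}, contradicting the structure of $h$ just determined. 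Only after this step is the length-$p_3$ hypothesis of~\Cref{orbits-D2pq} available. Without supplying these two arguments, the proposal remains an outline rather than a proof.
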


\begin{proof}
We also use~\Cref{notn-blocks}. By~\Cref{2-regular-fix-sig-2-part-1} and/or~\Cref{2-regular-fix-sig-1-part-1}, we may assume (possibly after conjugation by some $\beta \in G^{(2)}$) that $\mathcal C$ and $\mathcal Y$ are defined, and that $\mathcal X,Y \succeq \mathcal B_2$. 

Note that $\rho_1, \rho_2, \sigma_1, \sigma_2$ fix each block of $\mathcal B_2$, and for $y \in F_1$ we have $B_{2,y}\sigma_3=B_{2,y}\rho_3$, while for $z \in F_2$ we have $B_{2,z}\sigma_3=B_{2,z}\rho_3^k$ for some $k$. Since by~\Cref{cyclic-enough} the action of $C_r$ completely determines the action of $R_r$, if $k=1$ then $G$ is block-regular on $\mathcal B_2$. So we must have $1<k<p_3$. 

If $\mathcal X, \mathcal Y \preceq \mathcal B_3$ then every block of $\mathcal B_1$ in $F_2$ lies in an orbit of $G_x$ as does every block of $\mathcal C$ in $F_2$, so every block of $\mathcal B_2$ in $F_2$ lies in an orbit of $G_x$. By applying $\sigma_3\rho_3^{-1}$ this implies that $F_2$ is an orbit of $G_x$. Now~\Cref{use-cyclic} completes the proof. 

We may now assume without loss of generality that $X_x \cap F_2 \neq \emptyset$. Since there exist $i,j$ such that $g=\sigma_3\rho_3^{-1}\rho_1^i\rho_2^j \in G_x$ so fixes $X_x$ setwise, we conclude that $X_x$ intersects nontrivially with every block of $\mathcal B_2$ in $F_2$. Since $\mathcal X \succeq \mathcal B_2$, this means $F_2 \subset X_x$, and therefore since $\{\Omega\}$ is the smallest $R_r$-invariant partition that contains $x$ and $F_2$, $\mathcal X=\{\Omega\}$. By~\Cref{sigmas-on-X}(\ref{X-constant}), $\sigma_1=\rho_1$.

Since $\mathcal X=\{\Omega\}$, there must exist $\equiv_{\mathcal B_1}$-chains that pass from $F_1$ to $F_2$. Thus there must be some $z \in F_2$ such that $B_{1,z}$ does not lie in an orbit of $G_x$. 

Suppose that the blocks of $\mathcal C$ in $B_{2,z}$ all lie in a single orbit of the action of $G_x$ on the blocks of $\mathcal C$. Then there is an element $h \in G_x$ of order $p_1$ that fixes $B_{2,z}$ setwise and acts as a $p_1$-cycle on the blocks of $\mathcal C$ in $B_{2,z}$. Consider the action of $h$ on the blocks of $\mathcal B_1$ in $B_{2,z}$. Its orbits must have length $1$ or $p_1$. Using~\Cref{Gy-affine}, they either all have length $1$, or there is one orbit of length $1$ and the rest have length $p_1$. Since $h$ does not fix any block of $\mathcal C$ in $B_{2,z}$, if it fixes some block $B \in \mathcal B_1$ with $B \subset B_{2,z}$, then $B$ lies in an orbit of $G_x$. Since $B_{1,z}$ does not lie in an orbit of $G_x$, it follows that $B_{1,z}h \neq B_{1,z}$, so $h$ fixes a unique block $B_{1,z'}$ of $\mathcal B_1$ in $B_{2,z}$. Note that by the action of $h$, $B_{1,z'}$ lies in an orbit of $G_x$. Since $B_{1,z}$ does not, the blocks of $\mathcal B_1$ in $B_{2,z}$ cannot lie in a single orbit of $G_x$, so by~\Cref{Gy-affine}, every element of $G_x$ must fix $B_{1,z'}$. Thus $G_{B_{1,x}}=G_{B_{1,z'}}$ and therefore $K_x=K_{z'}$. Applying $g \in G_x$ to $z'$, we see that $K_x$ meets every block of $\mathcal B_2$ in $F_2$. Since $\mathcal Y \succeq \mathcal K$ by~\Cref{K-in-X}, this implies $\mathcal Y=\{\Omega\}$. By~\Cref{sigmas-on-X}(\ref{X-constant}), $\sigma_2=\rho_2$. However, by~\Cref{commuting}, $h$ commutes with $\rho_2$, which contradicts what we determined about the action of $h$ above (that it fixes $B_{1,z'}$ setwise but does not fix $B_{1,z}$ although there is some $b$ such that $B_{1,z}=B_{1,z'}\rho_2^b$. We conclude that the blocks of $\mathcal C$ in $B_{2,z}$ cannot all lie in a single orbit of the action of $G_x$ on the blocks of $\mathcal C$.

Conjugating by various powers of $g$, we conclude that for every $z \in F_2$, the blocks of $\mathcal C$ in $B_{2,z}$ do not all lie in a single orbit of the action of $G_x$ on the blocks of $\mathcal C$.
Since $\sigma_1=\rho_1$ commutes with every element of $G_x$ by~\Cref{commuting}, this implies that each orbit of $G_x$ on the blocks of $\mathcal C$ in $F_2$ has length $p_3$.

Applying~\Cref{orbits-D2pq} to the action of $G_{\mathcal C}$, we see that the orbits of $G_x$ on the blocks of $\mathcal C$ in $F_2$ are the orbits of $\rho_3$ in $F_2$. Conjugating by $\tau_1$ shows that the orbits of $G_{x\tau_1}$ on the blocks of $\mathcal C$ in $F_1$ are the orbits of $\rho_3$ in $F_1$. Together, these imply that the orbits of $\langle \rho_2,\rho_3\rangle$ are the same as the orbits of $\langle \sigma_2,\sigma_3\rangle$, so these orbits form a $G$-invariant partition with blocks of cardinality $p_2p_3$. Furthermore, $\rho_2, \rho_3, \sigma_2,$ and $\sigma_3$ all fix each of these orbits setwise, while $\rho_1=\sigma_1$, so the action of $G$ on this partition is block-regular. After reordering our primes as $p_2, p_3, p_1$ using~\Cref{reordering},~\Cref{reg-on-B2-done} completes the proof.
\end{proof}

Putting our results together gives our main theorem.

\begin{proof}[Proof of~\Cref{main}]
After applying~\Cref{better-blocks}, we may use~\Cref{notn-2} with $s=3$. Let $i \in 1,2, 3$ be as small as possible so that the action of $G$ is block-regular on $\mathcal B_i$. If $i=1$ then~\Cref{cor-reg-B1} shows that there is some $\beta \in G^{(2)}$ such that $R_r^{\pi\beta}=R_r$. If $i=2$ then we can reach the same conclusion from~\Cref{reg-on-B2-done}, and if $i=3$ then~\Cref{3-reg-done} yields this conclusion. In each case,~\Cref{Babai-conj} completes the proof.

Since every subgroup of a DCI-group is a DCI-group, it follows that the dihedral group of order $2pq$ is a DCI-group.
\end{proof} 

While it may be possible to push these techniques farther, to prove the result is true for $4$ or $5$ distinct primes, it should be clear that these methods become increasingly complex with more primes. I believe that a dihedral group of squarefree order that is a multiple of an odd number of primes is likely to be a DCI group, but some new ideas will be needed if this approach is ever to be successful in proving this.

\bibliography{References}{}

\providecommand{\MR}{\relax\ifhmode\unskip\space\fi MR }
\providecommand{\MRhref}[2]{%
  \href{http://www.ams.org/mathscinet-getitem?mr=#1}{#2}
}
\providecommand{\href}[2]{#2}

\end{document}